\documentclass[10pt]{amsart} %

\usepackage{float}

\usepackage{epsfig}
\usepackage{epstopdf}
\usepackage{tikz}
\usepackage{array,amsmath, enumerate,  url, psfrag}
\usepackage{amssymb, amsaddr, fullpage}
\usepackage{graphicx,subfigure}
\usepackage{color}


\newtheorem{theorem}{Theorem}[section]
\newtheorem{lemma}[theorem]{Lemma}

\newtheorem{proposition}[theorem]{Proposition}
\newtheorem{corollary}[theorem]{Corollary}

\newcommand{\n}{\noindent}
\newcommand{\pend}{\text{end}}
\newcommand{\pint}{\text{int}}
\newcommand{\com}{\text{,}\,\allowbreak}
\newtheorem{thm}{Theorem}[section]

\newtheorem{claim}[thm]{Claim}

\newtheorem{definition}[thm]{Definition}

\title{Fat-triangle linkage and kite-linked graphs}

\author{Runrun Liu$^{1}$,  Martin Rolek$^{2}$,    Gexin Yu$^{2}$}

\address{
$^{1}$\small School of Mathematics and Statistics, Central China Normal University, Wuhan, Hubei, China.\\
$^2$\small Department of Mathematics, The College of William and Mary, Williamsburg, VA, 23185, USA.
}

\thanks{The work is done while the first author is studying at the College of William and Mary as a visiting student,  supported by the Chinese Scholarship Council.  The research of the last author was supported in part by the Natural Science Foundation of China (11728102) and the NSA grant H98230-16-1-0316.}

\email{827261672@qq.com (R. Liu), msrolek@wm.edu, gyu@wm.edu}

\begin{document}
\maketitle

\begin{abstract}
For a multigraph $H$, a graph $G$ is $H$-linked if every injective mapping $\phi: V(H)\to V(G)$ can be extended to an $H$-subdivision in $G$.
We study the minimum connectivity required for a graph to be $H$-linked.
A $k$-fat-triangle $F_k$ is a multigraph with three vertices and a total of $k$ edges.
We determine a sharp connectivity requirement for a graph to be $F_k$-linked.
In particular, any $k$-connected graph is $F_k$-linked when $F_k$ is connected.
A kite is the graph obtained from $K_4$ by removing two edges at a vertex.
As a nontrivial application of $F_k$-linkage, we then prove that every $8$-connected graph is kite-linked, which shows that the required connectivity for a graph to be kite-linked is $7$ or $8$.
\end{abstract}

\section{Introduction}

In graph theory, we often need to find structures with certain constraints.
For example, a graph is $k$-connected if and only if for every pair of $k$-vertex sets $S$ and $T$, there exist $k$ disjoint paths from $S$ to $T$.
In this case, though, we have limited control over the endpoints of such paths.
It would be helpful to know if such disjoint paths can still be found when we specify the endpoints of each of the paths.
This leads to the notion of $k$-linked graphs.
A graph is {\em $k$-linked} if, for any $2k$ distinct vertices $s_1, t_1, s_2, t_2, \ldots, s_k, t_k$, there are disjoint paths $P_1, P_2, \ldots, P_k$ such that $P_i$ has endpoints $s_i$ and $t_i$ for each $i\in[k]$.
As another example, a classical result of Dirac~\cite{D60b} states that every $k$-connected graph has a cycle containing any given $k$ vertices.
What if we require the $k$ vertices to occur in the cycle in some given order?
This now leads to the notion of $k$-ordered graphs.
A graph is {\em $k$-ordered} if for every $k$ vertices with a given order, there exists a cycle containing the vertices in that order.

The $k$-linked graphs, $k$-ordered graphs, and other similar notions all have the same general flavor: for a given graph $H$, we try to find an $H$-subdivision (a subgraph which replaces the edges of $H$ by internally disjoint paths) in $G$ no matter how we place the vertices of $H$ in $G$.
This is the notion of $H$-linked graphs, first mentioned by Jung~\cite{J70}, and re-defined independently by Kostochka and G. Yu~\cite{KY05} and Ferrara, Gould, Tansey, and Whalen~\cite{FGTW06}.

\begin{definition}
Fix a multigraph $H$.
A graph $G$ is {\em $H$-linked} if for every injective mapping $\phi: V(H) \to V(G)$, there exists a mapping $\psi: E(H) \to \mathcal{P}(G)$, where $\mathcal{P}(G)$ is the set of paths in $G$, such that for every $uv \in E(H)$, $\psi(uv)$ is a $\phi(u), \phi(v)$-path in $G$, and distinct edges of $H$ map to internally disjoint paths in $G$.
\end{definition}

Note that an {\em $H$-subdivision} in a graph $G$ is a pair of mappings $(\phi, \psi)$ such that $\phi: V(H) \to V(G)$ is injective and $\psi$ satisfies the above condition.
So a graph is $H$-linked if every injective mapping $\phi: V(H) \to V(G)$ can be extended to an $H$-subdivision in $G$.
It is clear to see that a graph is $k$-connected if and only if it is $B_k$-linked, $k$-linked if and only if it is $kK_2$-linked, and $k$-ordered if and only if it is $C_k$-linked, where $B_k$ is the multigraph with two vertices and $k$ edges, $kK_2$ is a matching of size $k$, and $C_k$ is a cycle of length $k$.

Sufficient degree conditions for a graph to be $H$-linked have been extensively studied in \cite{FGTW06, GKY06, KKY06, KY05, KY08b, KY08a}.
In~\cite{LWY09}, Liu, West, and G. Yu asked to find the connectivity conditions for a graph to be $H$-linked.
For a given multigraph $H$, let $f(H)$ be the minimum positive integer $f$ such that every $f$-connected graph is $H$-linked.

When $H = kK_2$, we usually write $f(kK_2)$ as $f(k)$, which is the minimum positive integer $f$ such that every $f$-connected graph is $k$-linked.
This is a well-studied parameter, see Jung~\cite{J70}, Larman and Mani~\cite{LM70}, Mader~\cite{M67}, Robertson and Seymour~\cite{RS95}, Bollob\'as and Thomason~\cite{BT96}, and Kawarabayashi, Kostochka, and G. Yu~\cite{KKY06}.
Thomas and Wollan~\cite{TW05} have the best current general bound for $f(k)$, namely that $f(k) \le 10k$.

Better bounds for $f(k)$ have been found for some small values of $k$.
Jung~\cite{J70} showed that any $4$-connected non-planar graph is $2$-linked.
Seymour~\cite{S80}, and independently Thomassen~\cite{T80}, gave a complete characterization of non-$2$-linked graphs.
It follows that $f(2) = 6$.
Thomas and Wollan~\cite{TW08} improved the connectivity of their result in~\cite{TW05} to show that $10$-connected graphs are $3$-linked.
More specifically, they prove that $6$-connected graphs on $n$ vertices with at least $5n-14$ edges are $3$-linked and the edge bound is sharp.

Clearly, $f(H) \le f(k)$ if $H$ has at most $k$ edges and no isolated vertices.
X. Yu~\cite{Y03} charaterized all obstructions to $P_4$-linked graphs, and showed that $7 \le f(P_4) \le 8$, and Ellingham, Plummer, and G. Yu~\cite{EPY12} showed that $f(P_4) = 7$.
Very recently, McCarty, Wang, and X. Yu~\cite{MWY18} showed that $f(C_4) = 7$, that is, $7$-connected graphs are $4$-ordered, confirming a conjecture of Faudree~\cite{F01}.  Beyond this, very little is known about $f(H)$ for other multigraphs $H$.


In this paper, we are able to determine $f(H)$ exactly for another class of multigraphs $H$, namely, fat-triangles.
A graph $H = F_{k_1, k_2, k_3}$ is called a \emph{$(k_1, k_2, k_3)$-fat-triangle} if $|V(H)| = 3$, say $V(H) = \{v_1, v_2, v_3\}$, and there exist $k_i$ edges joining $v_i$ and $v_{i + 1}$ for each $i \in [3]$ (with subscripts read modulo 3).
Note that by a simple application of Menger's Theorem, any $(k + 1)$-connected graph is $F_{k, 0, 0}$-linked.
We will show that if $F_{k_1, k_2, k_3}$ is connected, then $(k_1 + k_2 + k_3)$-connected graphs are $F_{k_1, k_2, k_3}$-linked.

\begin{theorem}\label{thm:fattriangle}
Let $G$ be a $k$-connected graph.  Let $k_1, k_2, k_3$ be integers such that $k_1, k_2 > 0$, $k_3 \ge 0$, and $k_1 + k_2 + k_3 \le k$.  Then $G$ is $F_{k_1, k_2, k_3}$-linked.
\end{theorem}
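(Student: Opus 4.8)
Argue by induction on $k_1+k_2+k_3$, first reducing to the case that $v_1,v_2,v_3$ are pairwise non‑adjacent, then dispatching the ``book'' case $k_3=0$ by a clean max‑flow argument, and finally handling the genuinely cyclic case $k_3\ge 1$ by routing two of the three bundles and adjoining a fan for the third.

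\emph{Reduction.} If $v_iv_j\in E(G)$ and the $v_iv_j$-bundle has positive size, delete that edge: the new graph is $(k-1)$-connected, so by induction it is $F'$-linked, where $F'$ is obtained from $F_{k_1,k_2,k_3}$ by lowering that one bundle by one; adjoining the deleted edge as an extra path of the bundle yields the desired $F_{k_1,k_2,k_3}$-subdivision (the edge has no interior, so internal‑disjointness is automatic, and the paths of the $F'$-subdivision live in $G$ minus that edge). Iterating over the three pairs, we may assume $v_1,v_2,v_3$ are pairwise non‑adjacent, and, relabelling, $k_1\ge k_2\ge k_3$, so $k_1\ge k_2\ge 1$. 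If some bundle dropped to $0$ on the way we are in a sub‑case with at most a ``book'' (treated next), or, if two bundles vanish, the conclusion is immediate from Menger's theorem.

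\emph{The book case $k_3=0$.} Here $v_2$ lies on every path, and we want $k_1$ internally disjoint $v_1v_2$-paths and $k_2$ internally disjoint $v_2v_3$-paths, pairwise internally disjoint and each avoiding the opposite branch vertex. Set up a flow network: orient both bundles toward $v_2$, making $v_2$ a sink of capacity $k_1+k_2$, $v_1$ a source of capacity $k_1$, and $v_3$ a source of capacity $k_2$; present $v_1,v_3$ with out‑arcs only (to their $G$-neighbours) and $v_2$ with in‑arcs only (from its $G$-neighbours), so that no path can pass through a branch vertex, and give every other vertex capacity $1$. A routing exists iff the minimum source–sink cut has value $t=k_1+k_2$; but any cut either severs all source‑arcs (cost $t$), or severs the sink‑arc (cost $t$), or contains a vertex set which after deleting at most one branch vertex separates two branch vertices of $G$, hence has size $\ge k-1\ge t-1$, so total cost $\ge t$ (using $k_1\ge 1$ and $k\ge t$). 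An integral maximum flow then has value $t$ and decomposes into $t$ internally disjoint paths forming the required $F_{k_1,k_2,0}$-subdivision.

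\emph{The cyclic case $k_3\ge 1$.} Since the demand triangle $v_1v_2v_3$ is not acyclic it cannot be captured by a single max‑flow (a flow decomposition is forced to spend some path as a ``loop'' at a branch vertex), so route two bundles and a fan for the third. Take $k$ internally disjoint $v_1v_2$-paths $P_1,\dots,P_k$ in $G$; the vertex $v_3$ lies on at most one of them. If $v_3$ lies on $P_k$, then $P_k$ splits into a kept $v_2v_3$-path $P_k[v_3,v_2]$ and a discarded piece $P_k[v_1,v_3]$, and it remains to obtain $k_1$ $v_1v_2$-paths, $k_2-1$ $v_2v_3$-paths and $k_3$ $v_1v_3$-paths from $P_1,\dots,P_{k-1}$; if $v_3$ is on none, we simply need $k_1,k_2,k_3$ of each from $P_1,\dots,P_k$. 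Now route a fan from $v_3$ of the appropriate size ($k_2+k_3$, resp.\ $k_2+k_3-1$) into $\bigcup_iV(P_i)$, avoiding the already committed pieces: a fan‑path meeting the interior of $P_i$ cuts $P_i$ into a $v_1$-piece and a $v_2$-piece, and concatenating with the fan‑path gives — at our choice — a $v_1v_3$-path or a $v_2v_3$-path, while every $P_i$ missed by the fan remains a $v_1v_2$-path. A direct count using $k_1+k_2+k_3\le k$ shows all three bundle sizes can be met, provided the fan can be taken ``rainbow'' (landing in distinct $P_i$'s).

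\emph{Main obstacle.} The hard point is exactly making the $v_3$-fan rainbow. The fan lemma only produces \emph{some} fan from $v_3$, which may pile several of its paths onto one $P_i$; then fan‑paths are wasted and a bundle runs short, so one must re‑route a surplus fan‑path onto an untouched $P_j$ while avoiding the paths already committed — which, unlike branch‑vertex neighbourhoods, may be long, so that deleting them need not preserve high connectivity. I expect this to require passing to a minimal configuration (e.g.\ choosing $P_1,\dots,P_k$ together with the $v_3$-fan so that the total number of vertices used is minimum) and then using the $k$-connectivity of $G$ to show any over‑concentrated or misplaced fan‑path admits a local improvement; taming this interaction between the $v_3$-fan and long committed paths is where the real work lies.
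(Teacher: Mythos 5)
Your proposal does not contain a complete proof of the theorem: the entire cyclic case $k_3\ge 1$, which is the heart of the statement, is left as a plan whose decisive step you yourself flag as unresolved. Concretely, after fixing $k$ internally disjoint $v_1v_2$-paths $P_1,\dots,P_k$ and a fan from $v_3$ into their union, nothing forces the fan to be ``rainbow'': several fan-paths may land on the interior of the same $P_i$, in which case at most two of them (the outermost ones, one per side) are usable, the side-assignments they admit are forced rather than free, and the surplus fan-paths are wasted, so a bundle can run short of $k_2$ or $k_3$. Repairing this requires re-routing a fan-path around the already committed pieces, which are long paths rather than bounded neighbourhoods, so deleting them does not preserve enough connectivity to reapply Menger; the minimal-configuration/local-improvement argument you gesture at is exactly the missing content, and it is not a routine verification -- this interaction between a prescribed path system and a new terminal is precisely the kind of obstruction that needs a structural theorem, not just connectivity counting. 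The reduction to pairwise non-adjacent $v_i$ and the book case $k_3=0$ are fine (the latter is essentially the paper's own Menger/vertex-duplication argument), but they are the easy parts.

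For comparison, the paper takes a different route that avoids your obstacle entirely. It duplicates $v_1$, $v_2$, $v_3$ into sets $L_1,L_2,L_3$ of sizes $k_1+k_3$, $k_1+k_2$, $k_2+k_3$ (total $2k$) and applies Mader's $S$-paths ($H$-Wege) theorem: any $k$ vertex-disjoint ``good'' paths between distinct $L_i$'s must, by these cardinalities, automatically split as $k_1$ paths $L_1$--$L_2$, $k_2$ paths $L_2$--$L_3$, and $k_3$ paths $L_3$--$L_1$, which is exactly the fat-triangle linkage. The real work is then to refute Mader's dual obstruction (the partition $W,Y_1,\dots,Y_n$ with the sets $X_j$) using $k$-connectivity, a short-path reduction like your adjacency reduction, and induction on $k$; that case analysis is what replaces the rainbow-fan difficulty you ran into. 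If you want to salvage your outline, you would need either to prove a re-routing lemma of comparable strength to Mader's theorem or to invoke such a tool directly, at which point the duplication trick above is the natural way to encode the three bundle sizes.
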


Note that the connectivity condition in Theorem~\ref{thm:fattriangle} is sharp.
Let $k_1, k_2, k_3$ be as in the statement of Theorem~\ref{thm:fattriangle}, and suppose $k_1 + k_2 + k_3 = k$.
Let $G$ be a $(k - 1)$-connected graph with a separating set $S \subseteq V(G)$ with $|S| = k -1$ such that $G - S$ has three distinct components $C_1, C_2, C_3$.
Let $v_i \in V(C_i)$ for $i \in [3]$.
Then for distinct $i, j \in [3]$, every $v_i, v_j$-path must use at least one vertex of $S$.
Therefore $G$ cannot be $F_{k_1, k_2, k_3}$-linked.

\begin{corollary}
Let $k_1, k_2, k_3$ be integers with $k_1, k_2 > 0$, $k_3 \ge 0$.  Then $f(F_{k_1, k_2, k_3}) = k_1 + k_2 + k_3$.
\end{corollary}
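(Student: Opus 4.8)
Since $k_1,k_2>0$, the pattern $F_{k_1,k_2,k_3}$ is connected, so the bound $f(F_{k_1,k_2,k_3})\ge k_1+k_2+k_3$ is exactly the separating-set example displayed just after Theorem~\ref{thm:fattriangle}; thus the Corollary is equivalent to Theorem~\ref{thm:fattriangle}, which is what I would prove. Write $x_i=\phi(v_i)$, and note that a $k$-connected graph is $k'$-connected for all $k'\le k$, so we may assume $k=k_1+k_2+k_3$. The task is to build internally disjoint families $\mathcal P$ of $k_1$ paths from $x_1$ to $x_2$ avoiding $x_3$, $\mathcal Q$ of $k_2$ paths from $x_2$ to $x_3$ avoiding $x_1$, and $\mathcal R$ of $k_3$ paths from $x_3$ to $x_1$ avoiding $x_2$, with all $k_1+k_2+k_3$ paths pairwise internally disjoint.

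I would first clear away two kinds of easy cases. If two of the terminals are adjacent, say $x_1x_3\in E(G)$, use that edge as a member of $\mathcal R$, delete it, and recurse: $G-x_1x_3$ is $(k-1)$-connected and the residual demand $F_{k_1,k_2,k_3-1}$ still has its first two parameters positive, so induction applies; a reduction that drives some parameter to $0$ leaves either a plain Menger/fan problem or the ``fat cherry'' case $k_3=0$. For the fat cherry I would argue directly. Form a digraph on $V(G)\cup\{t\}$ by keeping every non-terminal edge in both orientations, orienting every edge incident to $x_1$ or to $x_3$ \emph{into} that vertex, and adding arcs $x_1\to t$ and $x_3\to t$ of capacities $k_1$ and $k_2$; give each non-terminal vertex capacity $1$. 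A flow of value $k_1+k_2$ from $x_2$ to $t$ splits into exactly $k_1$ internally disjoint $x_1,x_2$-paths and $k_2$ internally disjoint $x_2,x_3$-paths, and the orientation forbids either kind from passing through the other terminal. Every cut of this network has value at least $k_1+k_2$: a cut not using both arcs into $t$ must contain a vertex set that, after adding at most one other terminal, separates $x_2$ from $x_1$ or from $x_3$ in $G$, so $(k_1+k_2)$-connectivity makes it large; the only borderline case is a cut consisting of a vertex set $S$ with $|S|=k_1+k_2-1$, which is impossible because $S\cup\{x_3\}$ and $S\cup\{x_1\}$ would then be minimum separators forcing $x_1$ to separate $x_2$ from $x_3$ while $x_3$ separates $x_1$ from $x_2$ in $G-S$, contradicting the structure of the block--cut tree of $G-S$. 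Hence the flow exists and gives the fat-cherry linkage. After these reductions we may assume the terminals are pairwise non-adjacent and $k_1,k_2,k_3\ge1$.

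For the remaining case I would run an extremal argument in the spirit of the fan-based proof of Dirac's theorem on cycles through $k$ prescribed vertices (of which the case $k_1=k_2=k_3=1$ is an instance). Among all partial solutions --- internally disjoint families realizing $F_{a,b,c}$ with $a\le k_1$, $b\le k_2$, $c\le k_3$ and the correct avoidances --- pick one, $L$, with $a+b+c$ maximum and, subject to that, with fewest edges in total. If $a+b+c<k$, some parameter is deficient, say $a<k_1$; then no $x_1,x_2$-path internally disjoint from $L$ avoids $x_3$, so the set of internal vertices of the paths of $L$ together with $x_3$ separates $x_1$ from $x_2$ in $G$. Because $G$ is $k$-connected and the paths of $L$ have minimum total length, this separator cannot be ``efficient'': one can find an $x_1,x_2$-path meeting $L$ whose intersection with each path of $L$ is so constrained that swapping subpaths yields a partial solution with larger $a+b+c$, a contradiction. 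Therefore $a+b+c=k$ and $L$ is the desired $F_{k_1,k_2,k_3}$-linkage.

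The hard part is the rerouting step in the last paragraph: one is rearranging three interacting path families, each barred from one terminal, while preserving internal disjointness, and the bookkeeping of how the $k_1+k_2+k_3\le k$ units of connectivity are split among the three families is exactly what the argument must control. I expect this to require a careful case analysis on the minimum separators of $G$ that meet the interiors of $L$ --- organized by how $x_1,x_2,x_3$ and the paths of $L$ distribute among the two sides of such a separator --- and it is plausible that the proof as written will instead take the form of a minimal-counterexample analysis of the $k$-separators of $G$, absorbing one side of a separator into a recursive call whenever a small separator is found.
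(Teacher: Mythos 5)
Your treatment of the lower bound (the separating-set example), the adjacent-terminal reduction, and the $k_3=0$ ``fat cherry'' case is sound; for the last of these your vertex-capacitated flow network is essentially equivalent to the paper's construction, which duplicates $v_1$ $k_1$ times and $v_3$ $k_2$ times and takes a $k$-fan from $v_2$. But the substance of the corollary is the upper bound of Theorem~\ref{thm:fattriangle} in the main case $k_1,k_2,k_3\ge 1$, and there your argument is not a proof. After choosing a partial linkage $L$ maximizing $a+b+c$ and then minimizing total length, you assert that the separator formed by $x_3$ together with the interior of $L$ ``cannot be efficient'' and that some subpath swap enlarges the linkage. No augmenting path is constructed and no exchange is exhibited; note that the interior of $L$ can be much larger than $k$, so $k$-connectivity yields no contradiction by counting, and naive augmentation/exchange arguments are exactly what fail for linkage problems (this is why $f(2)=6$ exceeds $4$, and why results of this type lean on Mader- or Seymour--Thomassen-type structure theorems). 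You flag this step yourself as ``the hard part'' and only speculate how it might go, so the central case of the statement is left open; your closing guess that the paper runs a minimal-counterexample analysis of $k$-separators is also not what happens.

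The paper's actual route is different and carries all the weight: it duplicates $v_1$, $v_2$, $v_3$ respectively $k_1+k_3$, $k_1+k_2$, $k_2+k_3$ times, so that an $F_{k_1,k_2,k_3}$-linkage corresponds to $k$ disjoint good paths between the copy-sets $L_1,L_2,L_3$, and then applies Mader's $S$-paths theorem (Theorem~\ref{thm:HWege}). If the $k$ good paths do not exist, Mader's theorem produces the obstruction $W,Y_1,\dots,Y_n$ with subsets $X_j$, and a sequence of claims (each $|X_j|$ odd and at least $3$, $W=\emptyset$, $Y_j=X_j$ when $|X_j|<k$, control of where the $L_i$ and their neighborhoods sit, plus an induction on $k$ inside Claim~\ref{fat:W0}) forces $|W|+\sum_{j}\lfloor\tfrac12|X_j|\rfloor\ge k$, contradicting condition (a). To make your proposal complete you would either have to carry out the rerouting step in full generality---which I do not expect to succeed as stated---or replace that paragraph by an argument of this Mader type, which is precisely where the work in the paper lies.
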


We know that every $3$-connected graph contains a cycle through any three given vertices in the graph.
Our Theorem~\ref{thm:fattriangle} on fat-triangle-linkage generalizes this result.
There is another important reason for us to study fat-triangle-linkage.
In the study of $H$-linkage, it is important to find suitable intermediate substructures to work with.
For example, X. Yu~\cite{Y03} used ``ladders'' to study $P_4$-linkage, and McCarty, Wang, and X. Yu~\cite{MWY18} used ``skeletons'' to study $C_4$-linkage.
A fat-triangle is one such substructure that can be useful in the study of some $H$-linkage problems, in particular when $H$ has few vertices or has a $K_3$ subgraph.

We mentioned above that the only known exact values for $f(H)$ when $H$ has more than two edges are $f(B_k)$, $f(P_4)$, $f(C_3)$, and $f(C_4)$ (and now also fat-triangles).
Let a {\em kite}, denoted $P_4^+$, be the subgraph obtained from $K_4$ by removing two edges at a vertex.
The kite is the only connected graph $H$ with four vertices and at most four edges for which $f(H)$ remains unknown.
As $P_4$ is a subgraph of a kite, kite-linked graphs are also $P_4$-linked.
Therefore $f(P_4^+) \ge 7$.
On the other hand, a kite has four edges, and so $4$-linked graphs are $P_4^+$-linked, giving $f(P_4^+) \le 40$.
With the help of fat-triangle-linkage, we are able to narrow down $f(P_4^+)$ to two possible values.



\begin{theorem}\label{kite}
All $8$-connected graphs are kite-linked. Consequently, $f(P_4^+)\in \{7,8\}$.
\end{theorem}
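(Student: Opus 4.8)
\emph{Proof strategy.}
The plan is to realize a kite subdivision as a triangle subdivision on the images of the three triangle vertices together with a disjoint pendant path, and to construct the triangle part via a \emph{redundant} application of fat-triangle linkage (Theorem~\ref{thm:fattriangle}), so that there remain spare paths along which to route the pendant. Write $P_4^+$ with vertices $x_1,x_2,x_3,x_4$, where $x_1x_2x_3$ spans a triangle and $x_3x_4$ is the pendant edge, and given an injection $\phi$ put $a=\phi(x_1),\,b=\phi(x_2),\,c=\phi(x_3),\,d=\phi(x_4)$. It suffices to produce internally disjoint paths $P_{ab},P_{bc},P_{ca}$ (a triangle subdivision on $\{a,b,c\}$) and a $c,d$-path $P_{cd}$ meeting the triangle only at $c$ and avoiding $\{a,b\}$. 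I would argue by induction on $|V(G)|$; the base case is immediate, as $K_9$ is kite-linked.

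First I would clear away several ``local'' configurations. If $cd\in E(G)$, then $G-d$ is $7$-connected, hence $3$-connected, so it has a cycle through $a,b,c$, and this cycle together with the edge $cd$ is a kite. More generally: if $G-\{c,d\}$ contains an $a,b$-path with at most five internal vertices, route it as $P_{ab}$; then $G-\mathrm{int}(P_{ab})$ is still $3$-connected, and a $3$-fan from $c$ to $\{a,b,d\}$ inside it supplies $P_{ca},P_{bc},P_{cd}$. Symmetric moves win whenever there is a short $c,d$-path avoiding $\{a,b\}$ (route it, then take a cycle through $a,b,c$ in the $3$-connected remainder), a short $a$--$b$--$c$ path avoiding $d$ (route it, then a $2$-fan from $c$ to $\{a,d\}$), a short $b$--$c$--$d$ path avoiding $a$ (route it, then a $2$-fan from $a$ to $\{b,c\}$), or a short $a,c$- or $b,c$-path (route it, then apply $f(P_4)=7$ to the $7$-connected remainder). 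So I may assume none of these short paths exists.

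Next I would run the main construction. Apply Theorem~\ref{thm:fattriangle} to the $7$-connected graph $G-d$ with the triple $(a,b,c)$ and multiplicities $1,3,3$: this gives an $F_{1,3,3}$-subdivision $\mathcal F$ consisting of one $a,b$-path $A_1$, three $b,c$-paths $B_1,B_2,B_3$, and three $c,a$-paths $C_1,C_2,C_3$, pairwise internally disjoint and avoiding $d$. By the failed ``short $a,b$-path'' reduction, $A_1$ has at least six internal vertices, so $|V(\mathcal F)|\ge 8$; since $G$ is $8$-connected and $d\notin V(\mathcal F)$, there are eight internally disjoint paths in $G$ from $d$ to $V(\mathcal F)$, each meeting $\mathcal F$ only at its terminal vertex, ending at eight distinct vertices of $\mathcal F$. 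Let $W=\{c\}\cup\bigcup_k\mathrm{int}(B_k)\cup\bigcup_l\mathrm{int}(C_l)$. If one of these paths, say $Q$, ends at a vertex $v\in W$ — say $v\in\mathrm{int}(B_k)$ — then take $P_{ab}=A_1$, $P_{ca}=C_1$, $P_{bc}=B_{k'}$ for any $k'\ne k$, and let $P_{cd}$ be the $c,v$-subpath of $B_k$ followed by $Q$ reversed; a direct check shows $(P_{ab},P_{bc},P_{ca},P_{cd})$ is a kite, and the cases $v\in\mathrm{int}(C_l)$ and $v=c$ are the same. Since $W$ is connected and contains $c$, such a vertex $v$ fails to exist only when $\mathcal F\setminus W=\{a,b\}\cup\mathrm{int}(A_1)=V(A_1)$ separates $c$ from $d$ in $G$.

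The hard part will be this last case. There is then a separation $(X,Y)$ of $G$ with $X\cap Y=V(A_1)$, $c\in X\setminus Y$ and $d\in Y\setminus X$; as $V(A_1)$ is a separator of the $8$-connected $G$, $|V(A_1)|\ge 8$. Now $P_{cd}$ must leave $X$ through some $w\in\mathrm{int}(A_1)$ (it may not use $a$ or $b$), and I would route the $w,d$-portion inside $G[Y]$ and everything else inside $G[X]$. On the $Y$-side this is feasible: a Menger argument shows $d$ is joined to at least $|V(A_1)|-2\ge 6$ vertices of $\mathrm{int}(A_1)$ by disjoint paths avoiding $\{a,b\}$ in $G[Y]$, so there is a good choice of $w$ with a $w,d$-path there. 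On the $X$-side, what is needed is a kite subdivision on $\{a,b,c,w\}$; this lives in a graph on fewer vertices, so one wants the inductive hypothesis — but $G[X]$ need not be $8$-connected, so one must first complete $X\cap Y$ into a clique to get an $8$-connected $X^*$, obtain the kite there, and then expand the (few) clique edges used back into internally disjoint subpaths of $A_1$ without colliding with $w$ or with the triangle. Making this expansion coherent — choosing $w$ and the fat-triangle data so that all of these subpaths, together with the $Y$-side path, stay internally disjoint, and excluding the degenerate ways the three regions can interact — is where the real work lies; everything preceding it is a routine assembly of Menger's theorem, the fan lemma, the bound $f(P_4)=7$, and Theorem~\ref{thm:fattriangle}.
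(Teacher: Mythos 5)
Your reductions and the main dichotomy are genuinely different from the paper's route, but the proof is not complete: the case you yourself flag as ``where the real work lies'' is exactly the content of the theorem, and the clique-completion induction you sketch for it does not go through as stated. When $V(A_1)$ separates $c$ from $d$, the inductive kite subdivision in $X^*$ may use many of the virtual edges added inside the (possibly large) clique on $V(A_1)$, and it may also pass through $\mathrm{int}(A_1)$ on its own; replacing each used virtual edge $pq$ by the subpath of $A_1$ between $p$ and $q$ fails in general because these intervals can overlap or nest, can collide with the vertex $w$, and can collide with the parts of the subdivision that already lie on $A_1$. The alternative of expanding virtual edges through $G[Y]$ requires a many-terminal linkage inside $G[Y]$ with prescribed ends on $V(A_1)$, which $8$-connectivity of $G$ does not supply (the $Y$-side can be thin away from the separator). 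Moreover the choice of $w$ is circular: the inductive kite in $X^*$ depends on $w$, the set of virtual edges it uses depends on that kite, and the $d$--$w$ path in $G[Y]$ must avoid whatever is routed through $Y$. Nothing in the proposal resolves this interaction, and it is not plausible that it dissolves into ``routine'' bookkeeping.

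For comparison, this is precisely the regime in which the paper does its work: instead of inducting across a separator, it builds a flower (via the $F_{3,3,1}$-linkage in $G-u_4$ plus a fan from $u_4$, Lemma~\ref{lem:flower} and Theorem~\ref{flowerthm}), normalizes it so that $H=G-(V(C_1)\cup V(C_2))$ is $3$-connected (Proposition~\ref{prop:Conn}), and then, when no easy rerouting exists, invokes Seymour's characterization of non-$2$-linked graphs (Theorem~\ref{linkage}) to obtain a $3$-planar structure on $H$, finishing with the discharging Lemma~\ref{discharge} and separating pairs (Lemma~\ref{lem:ConnSep}). Machinery of this kind, or an argument of comparable depth, is what your final case is missing; until it is supplied, the proposal is an outline rather than a proof. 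Two minor repairable points: some of your ``short path'' thresholds are off (e.g.\ deleting five internal vertices and $d$ from an $8$-connected graph leaves only a $2$-connected graph, not enough to guarantee a cycle through $a,b,c$), and you use $f(P_4)=7$ as a black box, which is legitimate but something the paper deliberately avoids.
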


Note that the proof of $f(C_4) = 7$ in \cite{MWY18} relies on the result $f(P_4) = 7$ from~\cite{EPY12}, whose proof was quite involved.
Our proof for kite-linkage is self-contained and follows the ideas of the proof of $f(C_4) = 7$ in \cite{MWY18}.
By using the fat-triangle linkage, we first find a structure called a flower (see Figure~\ref{flower}), and show that every $7$-connected graph with a certain specified flower is kite-linked.
We do need $8$-connectedness to guarantee the existence of the desired flower though.
By doing something similar to our proof, one could find a self-contained and much simpler proof for $f(P_4) \le 7$.
We discuss this in more detail in the Final Remarks section.

\section{Connectivity for fat-triangle-linkage}

In this section, we prove Theorem~\ref{thm:fattriangle}.  Our proof uses the following Theorem~\ref{thm:HWege}, which is commonly referred to as Mader's $H$-Wege Theorem, or $S$-paths Theorem.  The statement of Theorem~\ref{thm:HWege} is actually a slight modification of Mader's original result~\cite{M78} given by Robertson, Seymour, and Thomas~\cite{RST93}.
An alternate, shorter proof of Mader's original theorem has been given by Schrijver~\cite{S01}.
Let $L_1, \dots, L_t$ be distinct subsets of $V(G)$.
We say a path in $G$ is \emph{good} if it has ends $u, v$ with $u \in L_i$ and $v \in L_j$ for $i \ne j$.

\begin{theorem}[Mader~\cite{M78}; Robertson, Seymour, and Thomas~\cite{RST93}]\label{thm:HWege}
Let $G$ be a graph, let $L_1, \dots, L_t$ be subsets of $V(G)$, and let $k \ge 0$ be an integer.
Then exactly one of the following holds:
\begin{enumerate}[(i)]
\item there are $k$ good paths of $G$, mutually vertex-disjoint, or

\item there exist a partition $W, Y_1, \dots Y_n$ of $V(G)$, and for all $j \in [n]$ a subset $X_j \subseteq Y_j$, such that
\begin{enumerate}[(a)]
\item $|W| + \sum_{j = 1}^n \left\lfloor \frac{1}{2} |X_j| \right\rfloor < k$,

\item for all $j \in [n]$, no vertex in $Y_j-X_j$ has a neighbor in $V(G)-(W \cup Y_j)$, and $Y_j \cap L_i \subseteq X_j$ for all $i \in [t]$, and

\item every good path $P$ in $G$ with $V(P) \cap W = \emptyset$ has an edge with both ends in $Y_j$ for some $j \in [n]$.
\end{enumerate}\end{enumerate}
\end{theorem}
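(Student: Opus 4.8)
Since Theorem~\ref{thm:HWege} is a known min--max theorem, the plan is to reprove it following the short proof of Schrijver~\cite{S01}; the argument has two halves. \emph{First, (i) and (ii) cannot hold simultaneously.} Suppose (ii) holds with partition $W,Y_1,\dots,Y_n$ and subsets $X_j\subseteq Y_j$, and let $P_1,\dots,P_m$ be mutually vertex-disjoint good paths. At most $|W|$ of them meet $W$. If $V(P_i)\cap W=\emptyset$, then by (c) some edge of $P_i$ lies inside a $Y_j$; fix such a $j=j(i)$ and let $Q_i$ be a maximal subpath of $P_i$ contained in $Y_{j(i)}$ that contains an edge. Each end $x$ of $Q_i$ is either an end of $P_i$ --- and then lies in some $L_\ell$, so $x\in Y_{j(i)}\cap L_\ell\subseteq X_{j(i)}$ by (b) --- or has a $P_i$-neighbour outside $Y_{j(i)}$, which since $P_i$ avoids $W$ lies outside $W\cup Y_{j(i)}$, so $x\in X_{j(i)}$ by the ``no outward neighbour'' clause of (b). Hence each $Q_i$ meets $X_{j(i)}$ in at least two vertices; as the $P_i$, and so the $Q_i$, are pairwise vertex-disjoint, at most $\lfloor\tfrac12|X_j|\rfloor$ indices $i$ satisfy $j(i)=j$. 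Summing over $j$ and adding the at most $|W|$ paths meeting $W$ gives $m\le |W|+\sum_j\lfloor\tfrac12|X_j|\rfloor<k$ by (a), so (i) fails.

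\emph{Second, if $G$ has no $k$ mutually vertex-disjoint good paths then (ii) holds.} Writing $\nu$ for the maximum number of such paths, it suffices to construct a structure satisfying (b) and (c) of total cost $|W|+\sum_j\lfloor\tfrac12|X_j|\rfloor$ at most $\nu$: since $\nu<k$, that structure also satisfies (a), giving (ii). I would first reduce, by standard local gadget replacements of vertices (cf.\ the reduction from Mader's original theorem in~\cite{RST93}), to the setting of a set $S$ of terminals partitioned into colour classes, where a good path is an $S$-path with distinctly coloured ends and interior disjoint from $S$; these replacements preserve $\nu$ and pull blockers back with the same cost. Then, following Schrijver, I would encode packings of $S$-paths as matchings of an auxiliary graph $G'$ obtained by splitting each non-terminal $v$ into two adjacent copies $v^+,v^-$ (so that any $S$-path through $v$ must traverse the edge $v^+v^-$), joining copies across the original edges, and keeping the terminals as single vertices; one checks that $\nu$ is a simple function of a maximum matching of $G'$, equivalently of its Tutte--Berge deficiency. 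Applying the Gallai--Edmonds decomposition to $G'$ yields the canonical partition of $V(G')$ into its perfectly matchable, deficient, and barrier parts, and I would read off $W$ from the barrier, let each $Y_j$ arise from a component of the deficient part after re-identifying $v^+$ with $v^-$, take $X_j$ to be the terminals inside $Y_j$ together with the vertices whose two copies straddle $Y_j$ and $W$, and verify that (b) and (c) hold and that the deficiency of $G'$ equals $|W|+\sum_j\lfloor\tfrac12|X_j|\rfloor$; finally I would transport this structure back through the gadget replacements.

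The main obstacle is precisely this last translation: showing that the Gallai--Edmonds structure of the auxiliary graph descends to an honest blocker of $G$. One has to design the splitting gadget so that the ``no neighbour outside $W\cup Y_j$'' condition survives the re-identification of $v^+$ and $v^-$, so that every colour class meeting $Y_j$ lands inside $X_j$ (that is, $Y_j\cap L_i\subseteq X_j$), and so that the matching-parity bookkeeping produces the cost $|W|+\sum_j\lfloor\tfrac12|X_j|\rfloor$ with exactly the right floors. This is the step where Mader's original proof is long and where Schrijver's reduction carries the weight; tracking which copy of each split vertex lands in which Gallai--Edmonds class, and how the gadget ports behave under the reduction, is the delicate bookkeeping the full proof must carry out. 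A self-contained alternative is a direct induction on $|E(G)|$ in the spirit of Mader's argument, but that route is considerably more involved.
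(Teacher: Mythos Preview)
The paper does not prove Theorem~\ref{thm:HWege}. It is quoted as a known result of Mader~\cite{M78}, in the formulation of Robertson, Seymour, and Thomas~\cite{RST93}, and is used as a black box in the proof of Theorem~\ref{thm:fattriangle}; the paper even remarks that Schrijver~\cite{S01} gave a shorter proof, but does not reproduce any argument. So there is no ``paper's own proof'' to compare your proposal against.

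As for the proposal itself: your first paragraph (that (ii) forbids $k$ disjoint good paths) is a clean and correct argument. Your second paragraph is an honest outline of Schrijver's matching reduction rather than a proof, and you correctly flag the delicate step---translating the Gallai--Edmonds decomposition of the auxiliary graph back into the partition $W,Y_1,\dots,Y_n$ with the right cost and with conditions (b) and (c) intact. That translation is exactly where all the work lies, and your sketch does not yet carry it out; in particular, the claim that ``$\nu$ is a simple function of a maximum matching of $G'$'' and the identification of $X_j$ both need careful statements and verification before this becomes a proof. If you want a self-contained argument for inclusion here, you would need to fill in those details (or follow~\cite{RST93} directly for the version stated); otherwise, citing the theorem as the paper does is the appropriate choice.
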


\n{\bf Proof of Theorem~\ref{thm:fattriangle}.}
We may assume $k_1 + k_2 + k_3 = k$, for otherwise we may let $k_1' = k - (k_2 + k_3)$ and find a $(k_1', k_2, k_3)$-fat triangle linkage, which contains a $(k_1, k_2, k_3)$-fat-triangle linkage as a subgraph.
Let $v_1, v_2, v_3 \in V(G)$ be distinct.
Suppose first that $k_3 = 0$.
Then we must find $k_1$ paths with ends $v_1, v_2$ and $k_2$ paths with ends $v_2, v_3$, such that all paths are internally disjoint.
Let $G'$ be the graph obtained from $G$ by duplicating $v_1$ $k_1$ times and duplicating $v_3$ $k_2$ times.
Then $G'$ is $k$-connected, so there exist $k$ disjoint paths from $v_2$ to the set of copies of $v_1$ and $v_3$, disjoint except for their common end $v_2$.
Each such path corresponds to a path in $G$ with one end $v_2$ and the other end either $v_1$ or $v_3$.
These paths in $G$ are internally disjoint and give the required $(k_1, k_2, 0)$-fat triangle linkage.

Thus we may assume $k_3 > 0$.
We now proceed by induction on $k$.
It is well-known that any $3$-connected graph contains a cycle through any three of its vertices, and is thus $(1, 1, 1)$-fat-triangle linked.
Hence $k \ge 4$ and we may assume that any $k'$-connected graph is $(k_1', k_2', k_3')$-fat-triangle linked, where $3 \le k' < k$, $1 \le k_i' \le k_i$ for $i \in [2]$, $0 \le k_3' \le k_3$, and $k_1' + k_2' + k_3' = k'$.

\begin{claim}\label{fat:ShortPath}
For all distinct $i, j \in [3]$, we may assume there is no $v_i, v_j$-path $P$ in $G$ with $|V(P)| \le 3$.
\end{claim}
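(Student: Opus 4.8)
\noindent\emph{Proof sketch.}
The plan is to show that if some two terminals, say $v_i$ and $v_j$, are joined by a path $P$ with $|V(P)|\le 3$, then the required $F_{k_1,k_2,k_3}$-subdivision on $v_1,v_2,v_3$ can be exhibited outright; hence for the rest of the proof we may assume no such path exists. Let $\ell\in\{k_1,k_2,k_3\}$ be the number of edges of $F_{k_1,k_2,k_3}$ joining $v_i$ and $v_j$, and let $v_h$ be the third terminal; since we are in the case $k_3>0$, all of $k_1,k_2,k_3$ are positive, so $\ell\ge 1$. As $|V(P)|\le 3$ and $v_i\ne v_j$, either $P$ is the edge $v_iv_j$ or $P=v_iwv_j$ for a single internal vertex $w$, and in the latter case we may take $w\notin\{v_1,v_2,v_3\}$: if $w=v_h$ then the edges $v_iv_h$ and $v_hv_j$ put us in the first subcase for another terminal pair. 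Put $G':=G-v_iv_j$ in the first subcase and $G':=G-w$ in the second. In either subcase $G'$ is $(k-1)$-connected, still contains $v_1,v_2,v_3$, and no path of $G'$ meets the interior of $P$.

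First I would route $P$ as one of the $\ell$ parallel $v_i,v_j$-paths of the subdivision and find the rest inside $G'$. If $\ell\ge 2$, the remaining edges form the fat-triangle obtained from $F_{k_1,k_2,k_3}$ by decreasing the $v_iv_j$-multiplicity to $\ell-1$; this still has all three multiplicities positive and total $k-1$ with $3\le k-1<k$, so the induction hypothesis supplies the corresponding subdivision in the $(k-1)$-connected graph $G'$ on the terminals $v_1,v_2,v_3$, and adjoining $P$ (whose only possible interior vertex $w$ lies outside $G'$) yields the desired $F_{k_1,k_2,k_3}$-subdivision of $G$. If instead $\ell=1$, let $a$ and $b$ be the numbers of $v_hv_i$- and $v_hv_j$-edges of $F_{k_1,k_2,k_3}$, so $a,b\ge 1$ and $a+b=k-1$; what remains is to find in $G'$ a family of $a$ internally disjoint $v_h,v_i$-paths together with $b$ internally disjoint $v_h,v_j$-paths, all mutually internally disjoint, the first family avoiding $v_j$ and the second avoiding $v_i$. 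This is exactly the ``$k_3=0$'' situation handled at the start of the proof: replace $v_i$ in $G'$ by $a$ pairwise non-adjacent copies each with neighborhood $N_{G'}(v_i)$, and $v_j$ by $b$ such copies, to get a $(k-1)$-connected graph $G''$; then take a fan of $a+b=k-1$ paths from $v_h$ to the set of copies, pairwise disjoint apart from $v_h$ and ending at distinct copies. Since no such path can meet a copy other than its own end, identifying each copy with its original turns them into the desired two families inside $G'$, and together with $P$ this realizes all edges of $F_{k_1,k_2,k_3}$, giving the desired subdivision of $G$.

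I do not expect a genuine obstacle here; the care needed is entirely bookkeeping. One must verify that $G'$ remains $(k-1)$-connected --- hence at least $3$-connected, so that the induction hypothesis stays applicable --- and that the recursively obtained paths really do avoid both the interior of $P$ and the third terminal $v_h$. The latter is precisely why one deletes the whole vertex $w$ rather than an incident edge, and why the ``copy'' (false-twin) construction replaces a bare application of Menger's theorem inside $G'$. With these checks done, the presence of a short $v_i,v_j$-path always lets one finish the proof, so we may assume, as claimed, that no such path exists.
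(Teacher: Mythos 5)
Your argument is correct and essentially the same as the paper's: delete the edge (or the single internal vertex) of the short path, use that the resulting graph is $(k-1)$-connected to obtain the fat-triangle linkage with the relevant multiplicity reduced by one, and then reinstate $P$. The only cosmetic differences are that the paper forces the internal vertex to avoid the third terminal by choosing $P$ of minimum order rather than by switching to another terminal pair, and that it simply cites the already-established $k_3'=0$ case (the vertex-duplication/Menger fan argument) instead of rerunning it when the reduced multiplicity becomes zero.
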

\begin{proof}
Suppose $P$ is such a path in $G$, chosen so that $|V(P)|$ is minimum.
By relabeling if necessary, we may assume $P$ has ends $v_3, v_1$.
If $|V(P)| = 2$, define $G' := G - v_1v_3$, and if $|V(P)| = 3$ let $w$ be the interior vertex of $P$, and define $G' := G - w$.
Note by the choice of $P$ that $w \ne v_2$.
Thus, in either case, $G'$ is $(k - 1)$-connected, so there exists a $(k_1, k_2, k_3 - 1)$-fat triangle linkage at $v_1, v_2, v_3$.
Such a linkage in $G'$ is easily extended to a $(k_1, k_2, k_3)$-fat triangle linkage in $G$ by adding the path $P$.
\end{proof}

Now let $G'$ be the graph obtained from $G$ by duplicating $v_1$ $k_1 + k_3$ times, duplicating $v_2$ $k_1 + k_2$ times, and duplicating $v_3$ $k_2 + k_3$ times.
Let $L_i \subseteq V(G')$ be the set of copies of $v_i$ for $i \in [3]$.
It is easy to see that if $G'$ has $k$ good paths, then it must have $k_1$ $L_1, L_2$-paths, $k_2$ $L_2, L_3$-paths, and $k_3$ $L_3, L_1$-paths, and these good paths in $G'$ correspond with paths in $G$ that give the desired $(k_1, k_2, k_3)$-fat-triangle linkage.
Therefore we may assume that $G'$ does not have $k$ good paths.
By Theorem~\ref{thm:HWege}, there exists $W \subseteq V(G')$, a partition $Y_1, \dots Y_n$ of $V(G') \setminus W$, and for all $j \in [n]$ a subset $X_j \subseteq Y_j$, such that they satisfy (a)(b)(c) of Theorem~\ref{thm:HWege}.




We may assume that the sets $W, Y_1, \dots, Y_n, X_1, \dots, X_n$ are chosen such that $W$ is maximal, and we may further assume that $Y_i \ne \emptyset$ for all $i \in [n]$.
We now prove a series of claims which establish the structure of the graph $G'$.

\begin{claim}\label{fat:n2} $n \ge 2$.
\end{claim}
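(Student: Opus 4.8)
The plan is to prove the claim by contradiction, assuming $n\le 1$ and contradicting condition~(a) of Theorem~\ref{thm:HWege} using only the fact that $L:=L_1\cup L_2\cup L_3$ has exactly $2k$ vertices. This is immediate from the construction of $G'$: the sets $L_i$ are pairwise disjoint, and $|L_1|+|L_2|+|L_3|=(k_1+k_3)+(k_1+k_2)+(k_2+k_3)=2(k_1+k_2+k_3)=2k$ (recall we arranged $k_1+k_2+k_3=k$ at the start of the proof).

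First I would dispose of $n=0$. In that case $V(G')=W$, so $L\subseteq W$ and $|W|\ge|L|=2k>k$, contradicting condition~(a), which here reads $|W|<k$. So I may assume $n=1$. Then $Y_1=V(G')\setminus W$ and $X_1\subseteq Y_1$, and condition~(b) gives $Y_1\cap L_i\subseteq X_1$ for every $i\in[3]$, hence $L\cap Y_1\subseteq X_1$. Writing $p:=|L\cap W|$, and using that $W$ and $Y_1$ partition $V(G')\supseteq L$, we get $|L\cap Y_1|=|L|-p=2k-p$, so $|X_1|\ge 2k-p$; in particular $0\le p\le 2k$, so every quantity below is a nonnegative integer. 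Then
\[
|W|+\left\lfloor\tfrac12|X_1|\right\rfloor\;\ge\;p+\left\lfloor\tfrac{2k-p}{2}\right\rfloor\;=\;p+k-\left\lceil\tfrac{p}{2}\right\rceil\;=\;k+\left\lfloor\tfrac{p}{2}\right\rfloor\;\ge\;k,
\]
which contradicts condition~(a). Therefore $n\ge 2$.

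Since the whole argument uses nothing about $G$ beyond the sizes $|L_i|$ (and not the maximality of $W$, the nonemptiness of the $Y_i$, condition~(c), or Claim~\ref{fat:ShortPath}), I do not expect any genuine obstacle here. The only points needing minor care are the floor/ceiling manipulations in the displayed chain and the observation that $p=|L\cap W|\le 2k$ automatically, so the case split on the size of $W$ is not actually needed. If one prefers, the two cases $n=0$ and $n=1$ can even be unified by setting $X_1:=\emptyset$ when $n=0$, since then the ``barrier'' $|W|+\sum_j\lfloor\tfrac12|X_j|\rfloor$ equals the left-hand side of the display in both cases.
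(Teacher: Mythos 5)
Your proof is correct and is essentially the paper's argument: from $L_1\cup L_2\cup L_3\subseteq W\cup X_1$ and $|L_1\cup L_2\cup L_3|=2k$ one gets $|W|+\lfloor\tfrac12|X_1|\rfloor\ge k$, contradicting (a). The only difference is that you spell out the $n=0$ case and the floor bookkeeping, which the paper leaves implicit.
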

\begin{proof}
If $n = 1$, then $L_1 \cup L_2 \cup L_3 \subseteq W \cup X_1$.
Since $|L_1 \cup L_2 \cup L_3| = (k_1 + k_3) + (k_1 + k_2) + (k_2 + k_3) = 2k$, we have $|W| + \left\lfloor \frac{1}{2} |X_1| \right\rfloor \ge k$, contradicting (a).
\end{proof}

\begin{claim}\label{fat:Xjempty} $X_j \ne \emptyset$ for any $j \in [n]$.
\end{claim}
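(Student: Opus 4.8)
The plan is a proof by contradiction: suppose $X_j = \emptyset$ for some $j \in [n]$, and derive a contradiction with the $k$-connectivity of $G'$ (recall $G'$ is $k$-connected, since adding a false twin of a vertex preserves $k$-connectivity once the graph has more than $k$ vertices, and $G$ is $k$-connected).

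The key observation is that condition (b) of Theorem~\ref{thm:HWege}, applied with this index $j$, says that no vertex of $Y_j - X_j$ has a neighbor in $V(G') - (W \cup Y_j)$. Since $X_j = \emptyset$ we have $Y_j - X_j = Y_j$, so in fact $N_{G'}(Y_j) \subseteq W$. I would then check that both $Y_j$ and $V(G') - (W \cup Y_j)$ are nonempty: the former because we have arranged $Y_i \neq \emptyset$ for every $i \in [n]$, and the latter because $n \geq 2$ by Claim~\ref{fat:n2}, so $V(G') - (W \cup Y_j)$ contains $Y_\ell$ for some $\ell \neq j$. Hence $G' - W$ has no edge between $Y_j$ and the remaining (nonempty) part, so $G' - W$ is disconnected and $W$ is a vertex cut of $G'$; this forces $|W| \geq k$. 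But this contradicts condition (a) of Theorem~\ref{thm:HWege}, which gives $|W| \leq |W| + \sum_{i=1}^{n} \left\lfloor \frac{1}{2}|X_i| \right\rfloor < k$. Therefore $X_j \neq \emptyset$ for all $j \in [n]$.

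I do not expect a serious obstacle here; the only points needing a little care are confirming that $G'$ is $k$-connected and that $W$ genuinely separates two nonempty sets — which is exactly why the earlier normalizations $n \geq 2$ and $Y_i \neq \emptyset$ for all $i$ are used. The maximality of $W$ is not needed for this claim and will presumably be exploited in later claims about the finer structure of $G'$.
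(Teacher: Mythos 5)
Your proof is correct and follows essentially the same route as the paper: assuming $X_j=\emptyset$, condition (b) shows $W$ separates the nonempty set $Y_j$ from some other nonempty $Y_\ell$ (using Claim~\ref{fat:n2}), so $k$-connectivity of $G'$ forces $|W|\ge k$, contradicting (a). Your added remarks on why $G'$ is $k$-connected and why maximality of $W$ is not needed here are accurate but not part of the paper's argument.
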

\begin{proof}
Assume to the contrary that $X_1 = \emptyset$.
Since $Y_1 \ne \emptyset$, and by (b) no vertex of $Y_1$ has a neighbor in $V(G') \setminus (W \cup Y_1)$, it follows that $W$ separates $Y_1$ and $Y_2$, where $Y_2 \ne \emptyset$ by Claim~\ref{fat:n2}.
Since $G'$ is $k$-connected, we must have $|W| \ge k$, contradicting (a).
\end{proof}

\begin{claim}\label{fat:Xjodd} $X_j$ is odd for all $j \in [n]$.
\end{claim}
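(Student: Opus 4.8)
The plan is to exploit the maximality of $W$, which is exactly what was arranged for an argument of this kind. Suppose for contradiction that $|X_j|$ is even for some $j$, and relabel so that $j = 1$; by Claim~\ref{fat:Xjempty} we have $X_1 \ne \emptyset$, hence $|X_1| \ge 2$. I would pick any vertex $x \in X_1$ and push it into the cut: set $W' := W \cup \{x\}$, $Y_1' := Y_1 \setminus \{x\}$, $X_1' := X_1 \setminus \{x\}$, and $Y_j' := Y_j$, $X_j' := X_j$ for $2 \le j \le n$. The goal is to check that $W', Y_1', \dots, Y_n', X_1', \dots, X_n'$ again satisfy (a)--(c) of Theorem~\ref{thm:HWege}; since $|W'| = |W| + 1 > |W|$, this would contradict the maximality of $W$ and finish the proof.

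For (a): since $|X_1|$ is even, $\lfloor \tfrac12 |X_1'| \rfloor = \lfloor \tfrac12(|X_1| - 1) \rfloor = \tfrac12|X_1| - 1 = \lfloor \tfrac12 |X_1| \rfloor - 1$, so the extra $1$ in $|W'|$ is cancelled and $|W'| + \sum_j \lfloor \tfrac12 |X_j'| \rfloor = |W| + \sum_j \lfloor \tfrac12 |X_j| \rfloor < k$. For (b): note $W' \cup Y_1' = W \cup Y_1$, so the part for $j = 1$ is unchanged, and $Y_1' \cap L_i \subseteq X_1'$ holds because $x \notin Y_1'$; for $j \ge 2$ we have $Y_j' - X_j' = Y_j - X_j$ while $V(G') \setminus (W' \cup Y_j)$ is a subset of $V(G') \setminus (W \cup Y_j)$, so the ``no neighbor outside $W \cup Y_j$'' condition is inherited. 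For (c): any good path $P$ avoiding $W'$ also avoids $W$ and avoids $x$, so the edge of $P$ with both ends in some $Y_j$ guaranteed by the original (c) in fact has both ends in $Y_j'$.

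I do not expect a genuine obstacle here: the whole argument is the single ``promote a vertex of $X_1$ into $W$'' move, and the only point requiring care is that enlarging $W$ and shrinking $Y_1$ does not break (b) or (c) — which works precisely because $W \cup Y_1$ is preserved and a larger $W$ can only make (b) and (c) easier to satisfy. The parity of $|X_1|$ enters only to keep inequality (a) intact.
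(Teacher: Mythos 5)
Your proposal is correct and is exactly the paper's argument: promote a vertex of $X_1$ into $W$, note that the parity of $|X_1|$ keeps inequality (a) intact while (b) and (c) are inherited since $W\cup Y_1$ is unchanged, and contradict the maximality of $W$. The extra detail you supply in verifying (a)--(c) is accurate and matches what the paper leaves implicit.
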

\begin{proof}
If, say, $|X_1|$ is even, then $|X_1| \ge 2$ by Claim~\ref{fat:Xjempty}.
Let $x \in X_1$.
Define $W' = W \cup \{x\}$, $Y_1' = Y_1 \setminus \{x\}$, $X_1' = X_1 \setminus \{x\}$, and $Y_j' = Y_j$ and $X_j' = X_j$ for all $j \in \{2, \dots, n\}$.
Then $W', Y_1', \dots, Y_n', X_1', \dots, X_n'$ satisfy (a)-(c), contrary to our choice of $W$ as maximal.
\end{proof}

%
%
%
%

\begin{claim}\label{fat:2Xj}
Every good path in $G'$ which avoids $W$ has at least 2 vertices in $X_j$ for some $j \in [n]$.
\end{claim}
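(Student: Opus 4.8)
The plan is to sharpen condition (c) of Theorem~\ref{thm:HWege} using condition (b): whenever a good path avoiding $W$ is forced to have an edge inside some $Y_j$, I want to show that it actually enters and leaves that ``pocket'' through two distinct vertices of $X_j$. So fix a good path $P$ in $G'$ with $V(P)\cap W=\emptyset$. By (c) there is an edge $e=xy$ of $P$ with $x,y\in Y_j$ for some $j\in[n]$. Let $Q$ be the maximal subpath of $P$ that contains $e$ and has all of its vertices in $Y_j$, and let $a,b$ be the two ends of $Q$; since $Q$ contains the edge $e$, $a\neq b$.

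The heart of the argument is to show $a\in X_j$ (and, by the same reasoning, $b\in X_j$). I would split into two cases according to whether $a$ is an end of $P$ or an interior vertex of $P$. If $a$ is an end of $P$, then since $P$ is good, $a$ lies in $L_i$ for some $i\in[3]$, so $a\in Y_j\cap L_i\subseteq X_j$ by the last part of (b). If $a$ is an interior vertex of $P$, then by maximality of $Q$ the edge of $P$ incident to $a$ that is not in $Q$ has its other end $a'$ outside $Y_j$; since $P$ avoids $W$, in fact $a'\in V(G')\setminus(W\cup Y_j)$, so $a$ has a neighbor in $V(G')\setminus(W\cup Y_j)$, and (b) then forces $a\in X_j$ (a vertex of $Y_j\setminus X_j$ has no such neighbor). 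Running the identical argument at $b$ gives $b\in X_j$, so $P$ meets $X_j$ in the two distinct vertices $a$ and $b$, which is what we want.

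I do not anticipate a genuine obstacle here — the whole claim is a local analysis of how $P$ crosses the boundary of the pocket $Y_j$. The only points requiring care are: (i) recording that $Q$ has two distinct ends, which is precisely where we use that $P$ has an actual edge inside $Y_j$ rather than merely a vertex of $Y_j$; and (ii) treating the ``$a$ is an end of $P$'' case separately, since there the membership $a\in X_j$ comes from $P$ being good (via $Y_j\cap L_i\subseteq X_j$) rather than from the no-neighbor-outside property in (b). This claim will then feed into a later counting step that contradicts (a), so it is important that we extract \emph{two} vertices of $X_j$ and not just one.
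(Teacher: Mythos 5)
Your proof is correct and follows essentially the same route as the paper: use (c) to get an edge $e$ of the good path inside some $Y_j$, then use both parts of (b) (no neighbors of $Y_j\setminus X_j$ outside $W\cup Y_j$, and $Y_j\cap L_i\subseteq X_j$) to locate a vertex of $X_j$ on each side of $e$, the two being distinct since $e$ has two distinct ends. Your write-up via the maximal $Y_j$-segment around $e$ is just a slightly more explicit packaging of the paper's argument.
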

\begin{proof}
Suppose $Q$ is a path in $G' \setminus W$ with ends $u_1 \in L_1$ and $u_2 \in L_2$, say.
By (c), $Q$ has some edge $e$ with both ends in $Y_j$ for some $j \in [n]$.
By (b), the subpath of $Q$ from $u_1$ to the first end of $e$ must contain some vertex of $X_j$, and the subpath of $Q$ from the second end of $e$ to $u_2$ must also contain some vertex of $X_j$.
%
\end{proof}

%

\begin{claim}\label{fat:W0}
$W = \emptyset$.
\end{claim}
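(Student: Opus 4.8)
\n{\bf Proof plan.} I would argue by contradiction: suppose $W\neq\emptyset$ and fix a vertex $w\in W$. The plan is to delete $w$ from $G'$ and play Theorem~\ref{thm:HWege} (which will still certify a deficiency after the deletion) against the induction hypothesis (which will supply enough good paths).

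First I would verify that $(W\setminus\{w\},\,Y_1,\dots,Y_n,\,X_1,\dots,X_n)$ still satisfies (a)--(c) of Theorem~\ref{thm:HWege} for the graph $G'-w$, the sets $L_1\cap V(G'-w),\,L_2\cap V(G'-w),\,L_3\cap V(G'-w)$, and the integer $k-1$. Indeed, (a) holds since $|W\setminus\{w\}|+\sum_j\lfloor\frac12|X_j|\rfloor = \big(|W|+\sum_j\lfloor\frac12|X_j|\rfloor\big)-1 < k-1$ by (a) for $G'$; (b) holds because $w\in W$ gives $V(G'-w)\setminus\big((W\setminus\{w\})\cup Y_j\big) = V(G')\setminus(W\cup Y_j)$ for every $j$, so the neighbourhood condition and the condition $Y_j\cap L_i\subseteq X_j$ are unchanged; and (c) holds because a good path of $G'-w$ avoiding $W\setminus\{w\}$ also avoids $w$, hence avoids $W$, hence is a good path of $G'$ and so has an edge inside some $Y_j$ by (c) for $G'$. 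Consequently, by Theorem~\ref{thm:HWege} applied to $G'-w$ with the integer $k-1$, the graph $G'-w$ contains no $k-1$ pairwise vertex-disjoint good paths.

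To contradict this I would produce $k-1$ such paths. If $w$ is not a copy of any $v_i$, then $G-w$ is $(k-1)$-connected, and since $3\le k-1<k$ the induction hypothesis yields a $(k_1,k_2,k_3-1)$-fat-triangle linkage at $v_1,v_2,v_3$ in $G-w$; routing its $k-1$ paths to distinct copies in $G'-w$ works because $G'-w$ has $k_1+k_3,\,k_1+k_2,\,k_2+k_3$ copies of $v_1,v_2,v_3$ while the linkage meets these vertices only $k_1+k_3-1,\,k_1+k_2,\,k_2+k_3-1$ times, so this produces $k-1$ disjoint good paths in $G'-w$. If instead $w$ is a copy of some $v_i$, then $G'-w$ is $G$ with $v_i$ duplicated one fewer time; since $G$ is $(k-1)$-connected I would apply the induction hypothesis to $G$ (or, when a coordinate falls to $0$, the $k_3=0$ case already settled at the start of the proof, after a relabeling of $v_1,v_2,v_3$) to obtain a fat-triangle linkage of total size $k-1$ with one fewer path incident to $v_i$ --- namely $(k_1,k_2,k_3-1)$ if $i\in\{1,3\}$, and a choice with $k_1'+k_2'=k_1+k_2-1$ if $i=2$ --- which again realizes $k-1$ disjoint good paths in $G'-w$. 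In every case this contradicts the previous paragraph, so $W=\emptyset$.

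The only step I expect to require care is the choice of the smaller linkage when $w$ is a copy of $v_2$ and $\min\{k_1,k_2\}=1$: then $G'-w$ has only one copy of $v_2$ left, so the reduced linkage is forced to have a zero coordinate, and one must fall back on the $k_3=0$ case of the theorem (via a relabeling) rather than on the induction hypothesis. Everything else is routine verification.
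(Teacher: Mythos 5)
Your proof is correct, but it takes a genuinely different route from the paper. The paper keeps $W$ intact: writing $|W|=m\ge 1$, it pairs the $2k$ terminal copies into $k$ pairs, keeps the $\ge k-m$ pairs avoiding $W$, and produces $k-m$ disjoint good paths avoiding $W$ --- either by a direct Menger argument when the surviving pairs join only two of the three terminals, or by the induction hypothesis applied to $G$ minus the non-terminal vertices of $W$ --- and then invokes Claim~\ref{fat:2Xj} to charge each such path two vertices of some $X_j$, forcing $|W|+\sum_j\lfloor\tfrac12|X_j|\rfloor\ge m+(k-m)=k$ against (a). You instead peel off a single vertex $w\in W$, check that the certificate $(W\setminus\{w\},Y_1,\dots,Y_n,X_1,\dots,X_n)$ still witnesses outcome (ii) of Theorem~\ref{thm:HWege} in $G'-w$ with parameter $k-1$ (your verification of (a)--(c) is right), and then contradict the ``exactly one'' dichotomy by lifting a fat-triangle linkage of total size $k-1$ (from the induction hypothesis, or from the Menger/star case after relabeling when $w$ is a copy of $v_2$ and $k_1=k_2=1$) to $k-1$ disjoint good paths in $G'-w$; your endpoint-multiplicity counts in each case check out, and the lifting to distinct copies is the same routine correspondence the paper itself uses. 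What each approach buys: yours avoids Claim~\ref{fat:2Xj} and the matching bookkeeping entirely, at the cost of re-applying Mader's theorem to a modified graph and a small terminal/non-terminal case split; the paper's argument removes all of $W$ at once and stays within the single fixed application of Theorem~\ref{thm:HWege}, which is why it needs the counting claim. One cosmetic slip in your closing remark: $\min\{k_1,k_2\}=1$ does not by itself force the reduced triple to have a zero coordinate (that happens only when $k_1=k_2=1$; otherwise you may decrement the larger of $k_1,k_2$ and stay within the induction hypothesis), but this does not affect correctness since the star fallback also works there.
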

\begin{proof}
Suppose to the contrary that $|W| = m$ for some $m \in [k - 1]$.
Let $M$ be a maximum collection of (unordered) pairs $(u, v)$ such that $u \in L_i$ and $v \in L_{i'}$ for distinct $i, i' \in [3]$, and no vertex of $L_1 \cup L_2 \cup L_3$ belongs to more than one pair in $M$.
Then $|M| = k$.
Consider the set $M' = \{(u, v) \in M : u, v \notin W\}$.
Then $|M'| \ge k - m$.
For $i \in [3]$, let $k_i' = |\{(u, v) \in M' : u \in L_i, v \in L_{i + 1}\}|$, where subscripts are read mod 3.
Then $k_1' + k_2' + k_3' \ge k - m$.
If, say, $k_2' = k_3' = 0$, then since $G' \setminus W$ is $(k - m)$-connected, there exist $k - m$ disjoint $L_1, L_2$-paths in $G' \setminus W$.
These paths are good paths in $G'$, so by Claim~\ref{fat:2Xj}, $|W| + \sum_{j = 1}^n \left\lfloor \frac{1}{2} |X_j| \right\rfloor \ge m + \frac{1}{2}(2(k - m)) = k$, contradicting (a).
Hence we may assume $k_1', k_2' \ne 0$.
Then let $k_1^*, k_2^*, k_3^*$ be chosen so that $1 \le k_1^* \le k_1'$, $1 \le k_2^* \le k_2'$, $0 \le k_3^* \le k_3'$, and $k_1^* + k_2^* + k_3^* = k - m$.
Note that $W \setminus (L_1 \cup L_2 \cup L_3)$ is a subset of $V(G)$.
Then since $G \setminus (W \setminus (L_1 \cup L_2 \cup L_3))$ is at least $(k - m)$-connected, there exists a $(k_1^*, k_2^*, k_3^*)$-fat-triangle linkage at $v_1, v_2, v_3$ by induction.
Any such linkage in $G \setminus (W \setminus (L_1 \cup L_2 \cup L_3))$ corresponds with $k_1^* + k_2^* + k_3^* = k - m$ good paths in $G'$ by using pairs in $M'$ as the ends of the paths.
So again by Claim~\ref{fat:2Xj}, $|W| + \sum_{j = 1}^n \left\lfloor \frac{1}{2} |X_j| \right\rfloor \ge k$, contradicting (a).
\end{proof}

\begin{claim}\label{fat:Yj=Xj}
For all $j \in [n]$, if $|X_j| < k$ then $Y_j = X_j$.
\end{claim}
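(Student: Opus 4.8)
The plan is to argue by contradiction: suppose $|X_j| < k$ but $Y_j \setminus X_j \neq \emptyset$ for some $j$, say $j = 1$. Since $W = \emptyset$ by Claim~\ref{fat:W0}, condition (b) says no vertex of $Y_1 \setminus X_1$ has a neighbor outside $Y_1$. Hence $X_1$ is a separating set in $G'$: it separates $Y_1 \setminus X_1$ from $Y_2 \setminus X_2$ (recall $Y_2 \neq \emptyset$ by our normalization, and if $Y_2 = X_2$ is empty we instead use that $X_2 \neq \emptyset$ by Claim~\ref{fat:Xjempty}, so $Y_2 \setminus X_2$ or at least some vertex outside $Y_1$ exists to be separated). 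Because $G'$ is $k$-connected, this forces $|X_1| \geq k$, contradicting the hypothesis $|X_1| < k$. The only subtlety is making sure there is genuinely a vertex on the ``other side'' of $X_1$ to separate it from; I expect this to follow cleanly from $n \geq 2$ (Claim~\ref{fat:n2}) together with $Y_i \neq \emptyset$ for all $i$, since then $Y_1 \setminus X_1$ is nonempty (our assumption) and $V(G') \setminus Y_1$ is nonempty (it contains $Y_2$), and $X_1$ sits between them.

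The one point that needs care is the case $Y_1 \setminus X_1 = \{y\}$ with $y$ having no neighbors at all outside $X_1$, or more generally when $G'$ restricted to one side is small — but $k$-connectivity of $G'$ handles this uniformly: any separating set has size at least $k$, regardless of the sizes of the pieces it separates. I should double-check that $X_1$ really does separate, i.e. that every path from a vertex of $Y_1 \setminus X_1$ to a vertex outside $Y_1$ meets $X_1$; this is immediate from (b) since such a path must leave $Y_1$, and the last vertex in $Y_1$ before leaving has a neighbor outside $Y_1$, forcing that vertex into $X_1$.

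The main obstacle, such as it is, is purely bookkeeping: confirming that the normalization ``$Y_i \neq \emptyset$ for all $i$'' plus $n \geq 2$ guarantees a nonempty complement of $Y_1$, so that the separation argument is not vacuous. Once that is in hand, the contradiction with condition (a) — or directly with $k$-connectivity — is immediate, and the claim follows.
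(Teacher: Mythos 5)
Your argument is correct and is essentially the paper's own proof: with $W=\emptyset$ (Claim~\ref{fat:W0}), condition (b) makes $X_j$ a cut separating $Y_j\setminus X_j$ from the nonempty rest of the graph (nonempty by Claim~\ref{fat:n2} and the normalization $Y_i\ne\emptyset$), so $k$-connectivity forces $|X_j|\ge k$. Your extra care about the ``other side'' being nonempty is exactly what the paper's citation of Claim~\ref{fat:n2} is for, so there is no substantive difference.
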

\begin{proof}
If $Y_j \setminus X_j \ne \emptyset$, then by (b) and Claims~\ref{fat:n2} and~\ref{fat:W0}, $X_j$ is a separating set.
Since $G$ is $k$-connected, we must have $|X_j| \ge k$.
\end{proof}

\begin{claim}\label{fat:Xj3}
For all $j \in [n]$, $|X_j| \ge 3$.
\end{claim}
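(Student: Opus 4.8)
The plan is to rule out $|X_j| = 1$ for all $j$; since each $|X_j|$ is odd (Claim~\ref{fat:Xjodd}) and nonempty (Claim~\ref{fat:Xjempty}), this suffices. So suppose $X_1 = \{x\}$. As $k \ge 4 > 1$, Claim~\ref{fat:Yj=Xj} gives $Y_1 = \{x\}$, and $W = \emptyset$ by Claim~\ref{fat:W0}. The first step is to pin down where $N_{G'}(x)$ lives: for each $j \ge 2$, condition~(b) forbids a vertex of $Y_j - X_j$ from having the neighbour $x \in V(G') - Y_j$, so $x$ has no neighbour in $Y_j \setminus X_j$, whence $N_{G'}(x) \subseteq X_2 \cup \cdots \cup X_n$, a set of size at least $\delta(G') \ge k$. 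Moreover $L_1 \cup L_2 \cup L_3 \subseteq \bigcup_j X_j$ (by~(b) and $W = \emptyset$), so the $2k-1$ copy vertices other than $x$ also lie in $X_2 \cup \cdots \cup X_n$. Write $s = \sum_{j=1}^n \lfloor \tfrac12|X_j|\rfloor < k$ by~(a), noting that $|X_1| = 1$ contributes $0$ to $s$.

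If $s \le k-2$, I would move $x$ into $W$: take $W' = \{x\}$ and delete the block $Y_1$ (there are others, as $n \ge 2$ by Claim~\ref{fat:n2}), leaving the remaining blocks unchanged. Condition~(b) only weakens; condition~(c) survives because $Y_1$ induces no edge, so a good path avoiding $x$ still has an internal edge in some $Y_j$ with $j \ge 2$; and the left side of~(a) becomes $|W'| + \sum_{j\ge2}\lfloor\tfrac12|X_j|\rfloor = 1 + s \le k-1 < k$. Thus $W', Y_2,\dots,Y_n, X_2,\dots,X_n$ is a valid certificate with $|W'| = 1 > 0 = |W|$, contradicting the maximality of $W$. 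Hence we may assume $s = k-1$.

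Next I would analyse this tight case by counting. Since each $|X_j|$ is odd, $s = k-1$ forces $\sum_j(|X_j|-1) = 2k-2$, hence $\sum_{j\ge2}|X_j| = 2k+n-3$; together with $L_1\cup L_2\cup L_3\subseteq\bigcup_j X_j$ this means $\bigcup_j X_j$ contains exactly $n-2$ non-copy vertices. If $x$ is a copy vertex, say $x\in L_1$, then Claim~\ref{fat:ShortPath} (there is no edge $v_1v_2$ or $v_1v_3$, and distinct copies of $v_1$ are nonadjacent) shows that $N_{G'}(x) = N_G(v_1)$ consists of at least $k$ non-copy vertices, all lying in $X_2\cup\cdots\cup X_n$ and distinct from the $2k-1$ other copies; hence $2k+n-3 \ge (2k-1)+k$, forcing $n\ge k+2$ and so at least $n-(k-1)\ge3$ singleton blocks. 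If $x$ is a non-copy vertex, then $N_{G'}(x)$ contains at least $|N_G(x)|-3\ge k-3$ non-copy vertices of $G$ lying in the blocks, and the analogous count gives $n \ge k$; if $n=k$ the count is tight and forces $x$ to be a common neighbour of all of $v_1,v_2,v_3$, contradicting Claim~\ref{fat:ShortPath}, while if $n > k$ there are again at least $n-(k-1)\ge2$ singleton blocks. So in every surviving case $G'$ has several singleton blocks, each contributing $0$ to $s$.

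The remaining step --- and the crux of the claim --- is to derive a contradiction from the coexistence of these ``useless'' singleton blocks with conditions~(b)--(c), Claims~\ref{fat:ShortPath} and~\ref{fat:2Xj}, and the $k$-connectivity of $G'$, by exhibiting either a vertex cut of $G'$ of size less than $k$ or a family of $k$ pairwise vertex-disjoint good paths. I expect this to be the main obstacle: in the tight case condition~(a) is so taut that every local repair --- absorbing a vertex into $W$, merging two blocks, or enlarging a block --- increases the left side of~(a) by at least one and therefore reaches the forbidden value $k$, so the extremality of the certificate cannot be invoked and the argument must be carried out globally. Handling the case $x \in L_i$, where $x$ cannot simply be deleted without changing the underlying linkage problem, looks the most delicate point.
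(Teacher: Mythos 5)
Your proposal does not reach a proof: after the reduction to the tight case $s=\sum_j\lfloor\tfrac12|X_j|\rfloor=k-1$ and the count showing that several singleton parts $X_j$ must exist, you explicitly stop and say the contradiction still has to be ``carried out globally.'' That final step is not a routine verification --- it is the entire content of the claim. Knowing that there are two or three singleton parts gives you, by itself, neither a cut of $G'$ of size less than $k$ nor $k$ disjoint good paths; none of the earlier claims (\ref{fat:ShortPath}, \ref{fat:2Xj}, \ref{fat:Yj=Xj}) applies directly to a configuration in which condition (a) is tight, and, as you yourself note, every local modification of the certificate (absorbing a singleton into $W$, merging parts) pushes the left side of (a) up to $k$ and so cannot be played against extremality. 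So the argument is genuinely incomplete, not merely unpolished. (The parts you do carry out --- $N_{G'}(x)\subseteq X_2\cup\cdots\cup X_n$ for a singleton $X_1=\{x\}$, the $W$-maximality trick when $s\le k-2$, and the copy/non-copy counting --- are correct, but they are preliminary.)

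For comparison, the paper's proof supplies exactly the missing global mechanism. Assuming some $|X_j|=1$ (and knowing some $|X_{j'}|\ge3$), it lets $U$ be the union of all singleton parts and defines, for $i\in[3]$, $Z_i$ to be the set of vertices reachable from $L_i$ by a path having no edge with both ends in a single $Y_j$; by (c) the sets $Z_1,Z_2,Z_3$ are pairwise disjoint. Because each singleton satisfies $Y_j=X_j$ (Claim~\ref{fat:Yj=Xj}), every edge leaving $Z_i\cap U$ stays inside $Z_i$, so the neighborhood $N$ of $Z_i\cap U$ has size at least $k$ and lies in $Z_i$. If the singletons met some $Z_i$ with $i\in[3]$, then $k$ disjoint paths between $Z_1$ and $Z_2\cup Z_3$ (which exist by $k$-connectivity) would each contain an edge inside some $Y_j$ and hence two vertices of some $X_j$, violating (a); so all singletons lie in $Z_0$, whence $N,Z_1,Z_2,Z_3$ are disjoint subsets of the parts of size at least $3$, giving $\sum_{j\le m}|X_j|\ge k+2k=3k$ and again contradicting (a). This $Z_i$-decomposition is the key idea your proposal lacks, and without it (or a genuine substitute) the tight case $s=k-1$ remains open.
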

\begin{proof}
Since $G'$ is connected, $G'$ has at least one good path which avoids $W$ since $W = \emptyset$ by Claim~\ref{fat:W0}.
Thus by Claims~\ref{fat:Xjodd} and~\ref{fat:2Xj}, $|X_j| \ge 3$ for some $j \in [n]$.
Without loss of generality, we may assume there is some $m \in \{2, 3, \dots, n - 1\}$ such that $|X_j| \ge 3$ for all $j \in [m]$ and $|X_j| = 1$ for all $j \in \{m + 1, \dots, n\}$.
For convenience, let $U = X_{m + 1} \cup X_{m + 2} \cup \dots \cup X_n$.
For $i \in [3]$, define the set $Z_i$ to be the union of the vertex sets of all paths $P$ meeting $L_i$ such that $P$ has no edge with both ends in $Y_j$ for any $j \in [n]$, and define $Z_0 = V(G') \setminus (Z_1 \cup Z_2 \cup Z_3)$.
Then for $i \in [3]$, $L_i \subseteq Z_i$.
We claim that $Z_1, Z_2, Z_3$ are pairwise disjoint.
For if $z \in Z_1 \cap Z_2$, say, then there exists some path $P$ from $L_1$ to $z$ and some path $Q$ from $L_2$ to $z$ such that neither $P$ nor $Q$ has an edge with both ends in $Y_j$ for any $j \in [n]$.
But then $P \cup Q$ is a good path in $G'$, no edge of which has both ends in the same set $Y_j$, contradicting (c).
Hence the sets $Z_0, Z_1, Z_2, Z_3$ are pairwise disjoint and have union $V(G')$.

Let $i \in \{0, 1, 2, 3\}$ such that $Z_i \cap U \ne \emptyset$.
Let $N$ be the set of vertices in $V(G') \setminus (Z_i \cap U)$ with a neighbor in $Z_i \cap U$.
Note that any vertex of $Z_i \cap U$ belongs to some set $X_j$ with $|X_j| = 1$, and so $Y_j = X_j$ by Claim~\ref{fat:Yj=Xj}.
Therefore any edge with one end in $Z_i \cap U$ and one end in $N$ does not have both ends in the same set $Y_j$, so it follows from the definition of the sets $Z_{i'}$ that both ends of such an edge belong to the same set $Z_{i'}$.
Hence $N \subseteq Z_i$.

Since $G'$ is $k$-connected, $|N| \ge k$.
We claim that $i = 0$.
So suppose that $i = 1$, say.
Then $|Z_1| \ge |N| \ge k$ and $|Z_2 \cup Z_3| \ge |L_2 \cup L_3| = (k_1 + k_2) + (k_2 + k_3) = k + k_2$.
Since $G'$ is $k$-connected, there exist $k$ disjoint $Z_1, Z_2 \cup Z_3$-paths.
By the definition of the sets $Z_i$ and since the sets $Z_i$ are disjoint, each of these paths must have some edge with both ends in the same set $Y_j$.
Thus, in a fashion similar to the proof of Claim~\ref{fat:2Xj}, it can be shown that each of these paths has at least two vertices in some set $X_j$.
Therefore, $\sum_{j = 1}^n \left\lfloor \frac{1}{2} |X_j| \right\rfloor \ge k$, contradicting (a).
Hence $i = 0$.

In particular, $U \subseteq Z_0$, and so $N$, $Z_1$, $Z_2$, and $Z_3$ are disjoint subsets of $X_1 \cup X_2 \cup \dots \cup X_m$.
Therefore $\sum_{j = 1}^m |X_j| \ge |N| + \sum_{i = 1}^3 |Z_i| \ge k + \sum_{i = 1}^3 |L_i| = 3k$.
Since $|X_j| \ge 3$ for $j \in \{1, \dots, m\}$, it follows that $\sum_{j = 1}^n \left\lfloor \frac{1}{2} |X_j| \right\rfloor \ge \frac{1}{3} \sum_{j = 1}^m |X_j| \ge k$, contradicting (a).
\end{proof}

\begin{claim}\label{fat:LiNbrs}
For $i \in [3]$, if there exist distinct $j, j' \in [n]$ such that $L_i \cap X_j, L_i \cap X_{j'} \ne \emptyset$, then $N(L_i) \subseteq (X_1 \cup \dots \cup X_n)$
\end{claim}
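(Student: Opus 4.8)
The plan is to prove the statement by contradiction; once we exploit that the vertices of $L_i$ are mutual twins in $G'$, the argument is short. Assume that some $i \in [3]$ satisfies $L_i \cap X_j \ne \emptyset$ and $L_i \cap X_{j'} \ne \emptyset$ for distinct $j, j'$, yet there is a vertex $z \in N(L_i)$ with $z \notin X_1 \cup \dots \cup X_n$. Since $W = \emptyset$ by Claim~\ref{fat:W0}, the sets $Y_1, \dots, Y_n$ partition $V(G')$, so $z \in Y_\ell$ for a unique $\ell \in [n]$; and as $z$ avoids every $X_t$, in fact $z \in Y_\ell \setminus X_\ell$.

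The next step is to read off from clause (b) of Theorem~\ref{thm:HWege}, applied with $W = \emptyset$, that $z$ has no neighbour in $V(G') \setminus Y_\ell$. Since $z \in N(L_i)$, some copy $u$ of $v_i$ is adjacent to $z$, which forces $u \in Y_\ell$; then the clause $Y_\ell \cap L_i \subseteq X_\ell$ of (b) yields $u \in X_\ell$. Thus $L_i$ already meets $X_\ell$.

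The crux is then to pit the other part met by $L_i$ against the twin structure. Since $j \ne j'$, one of the two indices, say $j^*$, differs from $\ell$, and the hypothesis supplies a copy $w \in L_i \cap X_{j^*}$; as $X_{j^*} \subseteq Y_{j^*}$ is disjoint from $Y_\ell$, we get $w \notin Y_\ell$. But $u$ and $w$ are both copies of $v_i$, and in $G'$ all copies of $v_i$ have the same neighbourhood: they form an independent set, and by Claim~\ref{fat:ShortPath} (no $v_i,v_{i'}$-path on at most three vertices) their common neighbourhood lies in $V(G)\setminus\{v_1,v_2,v_3\}$ and contains no copy of any $v_{i'}$. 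Hence $z \in N(u) = N(w)$, so $w$ is a neighbour of $z$ lying outside $Y_\ell$, contradicting the conclusion of the previous paragraph. Therefore $N(L_i) \subseteq X_1 \cup \dots \cup X_n$.

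The only step that needs genuine care is this use of the twin property: one must confirm that the common neighbourhood of the copies of $v_i$ is exactly $N_G(v_i)$ transported into $G'$ and meets none of $L_1, L_2, L_3$, which is precisely what Claim~\ref{fat:ShortPath} guarantees. Everything else is a direct appeal to clause (b); in particular no counting against inequality (a) is needed, and I do not anticipate any further obstacle.
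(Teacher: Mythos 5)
Your proof is correct and uses essentially the same idea as the paper: the copies of $v_i$ are twins with a common neighborhood $N(L_i)$, so condition (b) of Theorem~\ref{thm:HWege} (with $W=\emptyset$ from Claim~\ref{fat:W0}) forces every neighbor lying in some $Y_\ell$ to lie in $X_\ell$; the paper argues this directly while you phrase it as a contradiction. Your appeal to Claim~\ref{fat:ShortPath} is a harmless but unnecessary aside, since only the equality of the copies' neighborhoods is needed.
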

\begin{proof}
Say $u_1 \in L_1 \cap X_1$ and $u_2 \in L_1 \cap X_2$.
Since every vertex of $L_1$ is obtained by duplicating $v_1$, we have $N(u_1) = N(u_2) = N(L_1)$.
Thus by (b), any neighbor of $u_1$ in $Y_2$ belongs to $X_2$, any neighbor of $u_2$ in $Y_1$ belongs to $X_1$, and any neighbor of either $u_1$ or $u_2$ in $Y_3 \cup Y_4 \cup \dots \cup Y_n$ belongs to $X_3 \cup X_4 \cup \dots \cup X_n$.
\end{proof}

\begin{claim}\label{fat:2LiXj}
There exists $j \in [n]$ and distinct $i_1, i_2, i_3 \in [3]$ such that $(L_{i_1} \cup L_{i_2}) \subseteq X_j$ and $L_{i_3} \not\subseteq X_j$.
\end{claim}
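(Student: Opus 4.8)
The plan is to argue by contradiction; suppose the claim fails. First I would record two consequences of the earlier claims. Since $W=\emptyset$ (Claim~\ref{fat:W0}) and every $|X_j|$ is odd (Claim~\ref{fat:Xjodd}), condition (a) reads $\sum_{j=1}^n \frac{1}{2}(|X_j|-1) < k$, whence $\sum_{j=1}^n |X_j| \le 2k+n-2$; combined with $\sum_j |X_j| \ge 3n$ (Claim~\ref{fat:Xj3}) this gives $n \le k-1$, and therefore $\sum_j |X_j| \le 3k-3$. Next, by (b) with $W=\emptyset$ we have $L_1\cup L_2\cup L_3 \subseteq \bigcup_j X_j$, and since the $X_j$ are pairwise disjoint, $\sum_j |X_j| = |\bigcup_j X_j|$. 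Also, by Claim~\ref{fat:ShortPath} no edge of $G$ joins two of $v_1,v_2,v_3$, so $G'$ has no edge between two of $L_1,L_2,L_3$, while each $L_i$ is independent with $|L_i| \ge 2$. Finally, no $X_j$ can contain all of $L_1\cup L_2\cup L_3$: such an $X_j$ would have $|X_j|\ge 2k$, forcing $\sum_{j'} |X_{j'}| \ge 2k+3(n-1) > 2k+n-2$ (as $n\ge 2$ by Claim~\ref{fat:n2}), a contradiction. Since the claim fails, it follows that no $X_j$ contains two of the three sets $L_1,L_2,L_3$.

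Next I would show each $L_i$ lies in a single $X_j$. If not, say $L_1$ meets two distinct $X_j$'s; then Claim~\ref{fat:LiNbrs} gives $N(L_1) \subseteq \bigcup_j X_j$. As $L_1$ is independent with $|L_1|\ge 2$, deleting $N(L_1)$ isolates the vertices of $L_1$ and disconnects $G'$, so $k$-connectivity forces $|N(L_1)|\ge k$; moreover $N(L_1)$ is disjoint from $L_1\cup L_2\cup L_3$ (as $L_1$ is independent and has no edge to $L_2$ or $L_3$). Hence $\sum_j |X_j| \ge |L_1\cup L_2\cup L_3| + |N(L_1)| \ge 3k$, contradicting $\sum_j |X_j|\le 3k-3$. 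So there is a map $\sigma:[3]\to[n]$ with $L_i\subseteq X_{\sigma(i)}$. If $\sigma(i_1)=\sigma(i_2)$ for distinct $i_1,i_2$, set $j=\sigma(i_1)$; then $L_{i_1}\cup L_{i_2}\subseteq X_j$ and $L_{i_3}\not\subseteq X_j$ for the third index $i_3$ (no $X_j$ contains all three), and we are done.

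So assume $\sigma$ is injective; relabeling the $X_j$ (and correspondingly the $Y_j$) we may take $\sigma(i)=i$, so $L_i\subseteq X_i$ for $i\in[3]$ and $n\ge 3$. By disjointness, $X_i\cap(L_1\cup L_2\cup L_3)=L_i$ for $i\in[3]$, so $\sum_j |X_j| \ge (|L_1|+|L_2|+|L_3|) + 3(n-3) = 2k+3(n-3)$; together with $\sum_j |X_j|\le 2k+n-2$ this forces $n=3$, hence $\sum_j |X_j| \le 2k+1$. As $\sum_{j=1}^3 |X_j|$ is a sum of three odd numbers it is odd, hence equals $2k+1$; so exactly one vertex $w\in X_1\cup X_2\cup X_3$ lies outside $L_1\cup L_2\cup L_3$, and it lies in exactly one of them, say $X_{i_0}$, giving $X_{i_0}=L_{i_0}\cup\{w\}$ and $X_j=L_j$ for the other two indices $j$.

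The last and most delicate step is to refute this rigid configuration. Let $a,b$ be the two indices in $[3]\setminus\{i_0\}$, so $X_a=L_a$ and $X_b=L_b$. Each $|L_i|$ equals $k-k_m$ for the parameter $k_m$ not appearing in $|L_i|$, and $k_m\ge 1$, so $|L_i|\le k-1<k$; hence $Y_a=X_a=L_a$ and $Y_b=X_b=L_b$ by Claim~\ref{fat:Yj=Xj}. Now $N(L_a)\subseteq V(G')\setminus L_a = Y_{i_0}\cup Y_b$. By (b), a neighbor of $L_a$ in $Y_b$ must lie in $X_b=L_b$, which is impossible since there is no edge between $L_a$ and $L_b$; by (b), a neighbor of $L_a$ in $Y_{i_0}$ must lie in $X_{i_0}=L_{i_0}\cup\{w\}$, and cannot lie in $L_{i_0}$. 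Therefore $N(L_a)\subseteq\{w\}$, which forces $\kappa(G')\le 1$ (either $G'$ is disconnected, or $\{w\}$ separates $L_a$ from the nonempty set $L_b$), contradicting $k$-connectivity with $k\ge 4$. This contradiction proves the claim. I expect this final step to be the crux: the size bound $\sum_j |X_j|\le 3k-3$ pins the whole obstruction down to a single surplus vertex $w$, and one then has to exploit $k_1,k_2,k_3\ge 1$ (through Claim~\ref{fat:Yj=Xj}) and chase condition (b) to see that $w$ would be a cut vertex; everything before this is bookkeeping with the bound.
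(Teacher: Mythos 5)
Your proof is correct, and while it lives in the same counting framework as the paper's (drive a contradiction with condition (a) using Claims~\ref{fat:ShortPath}, \ref{fat:Xjodd}, \ref{fat:W0}, \ref{fat:Yj=Xj}, \ref{fat:Xj3}, \ref{fat:LiNbrs}), the organization and the decisive step are genuinely different. The paper splits into cases according to how many of the $L_i$ are contained in a single $X_j$, and in each case lower-bounds $\sum_j \lfloor \frac12 |X_j|\rfloor$ via the relaxation $\lfloor \frac12|X_j|\rfloor \ge \frac13 |X_j|$, explicit arithmetic in $k_1,k_2,k_3$, and the $|X_j|\ge k$ bounds from Claim~\ref{fat:Yj=Xj} when some $N(L_i)$ escapes $X_1\cup\dots\cup X_n$. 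You instead exploit oddness exactly to get the clean global bound $\sum_j|X_j|\le 2k+n-2$, combine it with $|X_j|\ge 3$ to get $n\le k-1$ and $\sum_j|X_j|\le 3k-3$, and use Claim~\ref{fat:LiNbrs} plus $|N(L_i)|\ge k$ to dispose at once of every case in which some $L_i$ is split among several $X_j$'s. This collapses the analysis to the single rigid configuration $L_i\subseteq X_i$, $n=3$, $\sum|X_j|=2k+1$, which you refute not by counting but structurally: $Y_a=X_a=L_a$ via Claim~\ref{fat:Yj=Xj} (here is the only place you need $k_1,k_2,k_3\ge 1$), and then condition (b) forces $N(L_a)\subseteq\{w\}$, contradicting $k$-connectivity of $G'$. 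The trade-off: the paper's route avoids pinning down the exact extremal structure and works case by case with weighted counting; yours is shorter, dispenses with the fractional bounds and the $k_i$ arithmetic, but hinges on the final structural/connectivity step. All the facts you invoke (independence of each $L_i$, absence of edges between distinct $L_i$'s, $k$-connectivity of $G'$, $L_1\cup L_2\cup L_3\subseteq X_1\cup\dots\cup X_n$ from (b) with $W=\emptyset$) are available at this point in the paper, so the argument stands as written.
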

\begin{proof}

First, suppose that $L_i \not\subseteq X_m$ for all $i \in [3]$ and $m \in [n]$.
By Claim~\ref{fat:ShortPath}, the sets $L_1, L_2, L_3, N(L_1), N(L_2)$, $N(L_3)$ are all disjoint.
Since $G'$ is $k$-connected, $|N(L_1)|\com |N(L_2)|\com |N(L_3)| \ge k$.
Then by Claim~\ref{fat:LiNbrs}, we have $|X_1 \cup \dots \cup X_n| \ge |L_1| + |L_2| + |L_3| + |N(L_1)| + |N(L_2)| + |N(L_3)| \ge 5k$.
By Claim~\ref{fat:Xj3}, $\sum_{j = 1}^n \left\lfloor \frac{1}{2} |X_j| \right\rfloor \ge \frac{1}{3}\sum_{j = 1}^n |X_j| \ge \frac{5}{3}k$, contradicting (a).

Thus we may assume that $L_1 \subseteq X_1$, say.
If $L_i \not\subseteq X_m$ for $i \in \{2, 3\}$ and $m \in [n]$, then by Claim~\ref{fat:LiNbrs}, $(N(L_2) \cup N(L_3)) \subseteq (X_1 \cup \dots \cup X_n)$.
Thus by Claim~\ref{fat:ShortPath} and since $G'$ is $k$-connected, $|(X_1 \cup X_2 \cup \dots \cup X_n) \setminus L_1| \ge |L_2| + |L_3| + 2k = 3k + k_2$.
Then
\begin{align*}
\sum_{j = 1}^n \left\lfloor \frac{1}{2} |X_j| \right\rfloor &\ge \frac{1}{2}(|L_1| - 1) + \frac{1}{3}(3k + k_2 - 1) \\
 &= \frac{4}{3}k + \frac{1}{6}(k_1 + k_3) - \frac{5}{6}
\end{align*}
Since $k_1, k_3 \ge 1$ and $k \ge 4$, we have $\frac{4}{3}k + \frac{1}{6}(k_1 + k_3) - \frac{5}{6} \ge k$, contradicting (a).

Therefore, we may additionally assume $L_2 \subseteq X_j$ for some $j \in [n]$.
Suppose $j \ne 1$, say $j = 2$.
If $N(L_i) \subseteq (X_1 \cup \dots \cup X_n)$ for $i \in \{1, 2\}$, then $\sum_{j = 1}^n \left\lfloor \frac{1}{2} |X_j| \right\rfloor \ge \frac{1}{3}(|L_1| + |L_2| + |L_3| + |N(L_1)| + |N(L_2)|) \ge \frac{4}{3}k$, contradicting (a).
Hence we may assume $N(L_1) \not\subseteq (X_1 \cup \dots \cup X_n)$.
Then $N(L_1) \cap (Y_1 \setminus X_1) \ne \emptyset$, so it follows from Claim~\ref{fat:Yj=Xj} that $|X_1| \ge k$.
If also $N(L_2) \not\subseteq (X_1 \cup \dots \cup X_n)$, then similarly $|X_2| \ge k$.
Then, if $X_3 \ne \emptyset$, it follows from Claim~\ref{fat:Xj3} that $\sum_{j = 1}^n \left\lfloor \frac{1}{2} |X_j| \right\rfloor \ge 2 \cdot \left\lfloor \frac{1}{2} k \right\rfloor + 1 \ge k$, contradicting (a).
So $n = 2$.
If $L_3 \subseteq X_1$ or $L_3 \subseteq X_2$, then we are done, so we may assume neither holds.
Thus by Claim~\ref{fat:LiNbrs}, $N(L_3) \subseteq X_1 \cup X_2$.
Then $|X_1 \cup X_2| \ge |L_1| + |L_2| + |L_3| + |N(L_3)| \ge 3k$, and so $\sum_{j = 1}^n \left\lfloor \frac{1}{2} |X_j| \right\rfloor \ge \frac{1}{2}(3k - 2) \ge k$ since $k \ge 4$, contradicting (a).
Thus we may assume that $N(L_2) \subseteq (X_1 \cup \dots \cup X_n)$.
Either by similar argument if $L_3 \subseteq X_j$ for some $j \in \{3, \dots, n\}$ or by Claim~\ref{fat:LiNbrs}, we may also assume $N(L_3) \subseteq (X_1 \cup \dots \cup X_n)$.
Then
\begin{align*}
\sum_{j = 1}^n \left\lfloor \frac{1}{2} |X_j| \right\rfloor &\ge \frac{1}{2}(|L_1| - 1 + |L_2| - 1) + \frac{1}{3}(|L_3| + |N(L_2)| + |N(L_3)| - 2) \\
 &= \frac{4}{3}k + \frac{1}{6}(2k_1 + k_2 + k_3) - \frac{5}{3}
\end{align*}
Now since $k_1, k_2, k_3 \ge 1$ and $k \ge 4$, we have $\frac{4}{3}k + \frac{1}{6}(2k_1 + k_2 + k_3) - \frac{5}{3} \ge k$, contradicting (a).

Therefore we must have $(L_1 \cup L_2) \subseteq X_1$, proving the first part of the statement.
For the final part of the statement, note that if $(L_1 \cup L_2 \cup L_3) \subseteq X_1$, then $|X_1| \ge 2k$, so $\sum_{j = 1}^n \left\lfloor \frac{1}{2} |X_j| \right\rfloor \ge k$, contradicting (a).
\end{proof}

By Claim~\ref{fat:2LiXj}, we may assume that $(L_1 \cup L_2) \subseteq X_1$ and $L_3 \not\subseteq X_1$.
Suppose $L_3 \subseteq X_2$ and $N(L_3) \not\subseteq (X_1 \cup X_2 \cup \dots \cup X_n)$.
Then $N(L_3) \cap (Y_2 \setminus X_2) \ne \emptyset$, so by Claim~\ref{fat:Yj=Xj}, $|X_2| \ge k$.
But $|X_1| \ge |L_1| + |L_2| = (k_1 + k_3) + (k_1 + k_2) = k + k_1 \ge k + 1$, so
$\sum_{j = 1}^n \left\lfloor \frac{1}{2} |X_j| \right\rfloor \ge \left\lfloor \frac{1}{2}(k + 1) \right\rfloor + \left\lfloor \frac{1}{2}k \right\rfloor = k$, contradicting (a).
Therefore, either by the above or by Claim~\ref{fat:LiNbrs}, we may assume that at least one $L_i$ has $N(L_i) \subseteq (X_1 \cup \dots \cup X_n)$.
We need to be more precise with this last sum, so let $c \in \{0, 1\}$ such that $c = |L_1 \cup L_2| \mod 2$.
Then
\begin{align*}
\sum_{j = 1}^n \left\lfloor \frac{1}{2} |X_j| \right\rfloor &\ge \frac{1}{2}(|L_1 \cup L_2| - c) + \frac{1}{3}(|L_3| + k - (1 - c)) \\
 &= k + \frac{1}{6}(2k_1 + k_2 + k_3) - \frac{c}{6} - \frac{1}{3}
\end{align*}
Since $k_1, k_2, k_3 \ge 1$, in all cases we have $k + \frac{1}{6}(2k_1 + k_2 + k_3) - \frac{c}{6} - \frac{1}{3} \ge k$, contradicting (a).
This contradiction completes the proof of Theorem~\ref{thm:fattriangle}.
\hfill$\square$

\section{Separating pairs and $3$-planar graphs}

We now introduce some notation we will use in the rest of the proof.
Let $C$ be a cycle in a graph $G$ and $u,v$ be two distinct vertices on $C$.
Given a fixed orientation of $C$, we denote by $C[u,v]$ the subpath of $C$ from $u$ to $v$, and denote $C(u,v]=C[u,v]\setminus\{u\}$, $C[u,v)=C[u,v]\setminus\{v\}$, and $C(u,v)=C[u,v]\setminus\{u,v\}$.
We also use similar notation for paths.
Let $P$ be a path in a graph $G$.
We use $\pend(P)$ to denote the set of endpoints of $P$ and define $\pint(P)=V(P)\setminus \pend(P)$.

The following notion of separating pairs was first studied by McCarty, Wang, and Yu~\cite{MWY18}.

\begin{definition}
\label{separatingpair}
Let $G$ be a graph, let $C$ be a cycle in $G$, let $u_1, u_2 \in V(C)$ be distinct, and let $A \subseteq V(G) \setminus V(C)$.
Then a $(u_1, u_2, C, A)$-separating pair is a set of paths $\{R_1, R_2\}$ such that there exists an orientation of $C$ so that

(i) for $i \in [2]$, $R_i$ is a subpath of $C[u_i, u_{3 - i}]$,

(ii) for $i\in [2]$, $N(A) \cap V(C[u_i,u_{3-i}]) \subseteq V(R_i)$, and

(iii) the graph $G$ has no edge $uv$ such that $u \in \pint(R_1) \cup \pint(R_2)$ and $v \in V(C) \setminus (V(R_1) \cup V(R_2))$.

Furthermore, for all $i, j \in \{1, 2\}$, if $R_i \ne \emptyset$ then we define $r_i^j$ to be the end of $R_i$ closest to $u_j$ on $C[u_i, u_{3 - i}]$. (Thus, $r_i^1 = r_i^2$ if $R_i$ consists of a single vertex.)
\end{definition}

Clearly, a $(u_1, u_2, C, A)$-separating pair exists as the two paths in $C$ between $u_1$ and $u_2$ form such a pair.
For convenience, a $(u_1, u_2, C, A)$-separating pair is said to be {\em special} if $|V(R_1)|+|V(R_2)|$ is minimum.

\begin{lemma}[McCarty, Wang, and Yu~\cite{MWY18}]\label{lem:ConnSep}
Let $\{R_1, R_2\}$ be a special $(u_1, u_2, C, A)$-separating pair in a graph $G$, and let $ x\in \pint(R_1)$.
Then

(i) there exists $a \in V(C)$ with $N(a) \cap A \ne \emptyset$ such that the graph $G[V(C)]$ contains a path through $x, u_1, u_2, a$ in order (possibly $a = u_2$),

(ii) if $G[A]$ is connected then $G[V(C) \cup A] \setminus \pint(R_1)$ contains a cycle through $u_1$ and $u_2$, and

(iii) if $G[A]$ is connected then for every vertex $x' \in \pint(R_2)$, $G[V(C) \cup A]$ contains a path through $x, u_1, u_2, x'$ in order.
\end{lemma}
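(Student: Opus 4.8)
The plan is to exploit the minimality of a \emph{special} separating pair, together with the definition of separating pair, to route the required paths/cycles through the portion of $C$ that lies ``outside'' $R_1$ and $R_2$.

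First I would set up notation: fix the orientation of $C$ witnessing that $\{R_1,R_2\}$ is a $(u_1,u_2,C,A)$-separating pair, so that $R_i\subseteq C[u_i,u_{3-i}]$. Let $x\in\pint(R_1)$, so in particular $R_1$ has at least three vertices, and write $r_1^1,r_1^2$ for its ends closest to $u_1,u_2$ respectively, with $x$ strictly between them on $C$.

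For part~(i), the idea is: if no vertex $a\in V(C)$ with a neighbor in $A$ lies on the arc $C[u_2,u_1]$ through the ``$R_2$ side'' in the appropriate way, then I claim $R_1$ could be shortened, contradicting specialness. Concretely, consider the subpath $C[r_1^2,u_1]$ (the part of $C[u_1,u_2]$ from $u_1$ up to the far end of $R_1$): the path $G[V(C)]$ already contains $C[x,u_1]$ concatenated with $C[u_1,u_2]$, which passes through $x,u_1,u_2$ in order. I then need the existence of $a$ with $N(a)\cap A\neq\emptyset$ reachable after $u_2$ on this walk within $G[V(C)]$; this follows because $N(A)\cap V(C[u_2,u_1])\subseteq V(R_2)$ by condition~(ii), and if every such $a$ were ``blocked,'' one could replace $R_1$ by a strictly shorter subpath still satisfying (i)--(iii) of Definition~\ref{separatingpair}, contradicting minimality of $|V(R_1)|+|V(R_2)|$. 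So I would argue by contradiction from specialness, using condition~(iii) of the separating-pair definition to ensure no edges jump from $\pint(R_1)$ to the rest of $C$, which is exactly what lets the shortened configuration remain valid.

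For parts~(ii) and~(iii), I would additionally use the hypothesis that $G[A]$ is connected. The point is that the two ends $r_1^1$ and $r_1^2$ of $R_1$ — or, more carefully, the two vertices of $R_1$ adjacent to the rest of the grach on each side — each have (by an argument like in part~(i), applied to both ends) a vertex of $V(C)\setminus(V(R_1)\cup V(R_2))$ or of $A$ that they connect to; combined with a path through $A$ (which exists since $G[A]$ is connected and $A$ attaches to $C$ in at least two places near $r_1^1$ and $r_1^2$ by the shortening argument), I can build a detour from $r_1^1$ to $r_1^2$ that completely avoids $\pint(R_1)$. Splicing this detour into $C$ in place of the internal part of $R_1$ yields the cycle through $u_1$ and $u_2$ in $G[V(C)\cup A]\setminus\pint(R_1)$ of part~(ii). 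For part~(iii), I would do the same at a vertex $x'\in\pint(R_2)$ on the other side: route from near $x'$ through $A$ around to near $x$, producing the desired path through $x,u_1,u_2,x'$ in order, where again the ordering is controlled by which arcs of $C$ the detour replaces.

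The main obstacle I anticipate is making the ``shortening argument'' fully rigorous: one must show that whenever the claimed $a$ (or the claimed $A$-attachment near an end of $R_i$) fails to exist, the resulting smaller candidate pair genuinely satisfies all three conditions of Definition~\ref{separatingpair} — especially condition~(iii), the no-crossing-edge condition — so that it contradicts specialness. Handling the degenerate cases (an $R_i$ that is empty or a single vertex, or $a=u_2$, or $r_1^1=r_1^2$) cleanly, and keeping the cyclic orientation bookkeeping consistent across parts~(i)--(iii), will be the fiddly part, but none of it should require more than careful case analysis once the contradiction-from-minimality template is in place.
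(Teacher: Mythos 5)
You are proving a lemma the paper itself does not prove (it is quoted from~\cite{MWY18}), so there is nothing in-paper to compare against; judged on its own, your plan has a genuine gap. The reduction you lean on throughout --- ``either an attachment of $A$ is available where I need it (after $u_2$ on the walk for (i), near both ends of $R_1$ for (ii)--(iii)), or else $R_1$ could be shortened, contradicting specialness'' --- is not valid. What the single-vertex truncation test against specialness actually gives is a dichotomy at each end of $R_i$: that end lies in $N(A)$, \emph{or} it is the endpoint of a chord coming from $\pint(R_{3-i})$ (or from deeper inside $R_i$). The second branch is not a contradiction; it is a genuine configuration, and in it the required path or cycle has to be routed through such chords, which your sketch never does. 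In particular it is simply false that $A$ must attach near both ends of $R_1$, and false that some $a\in N(A)$ must lie on $C(x,u_2]$.

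A concrete special pair shows this. Let $C$ be the cycle $u_1\,p\,x\,w\,u_2\,s\,q_2\,z\,q_1\,t$ (in this cyclic order), let $A=\{v\}$ with $N(v)\cap V(C)=\{p,q_1,q_2\}$, and let $wz$ be the only chord. Condition (ii) forces $q_1,q_2\in V(R_2)$, hence $z\in\pint(R_2)$ for every admissible $R_2$; condition (iii) applied to the edge $wz$ then forces $w\in V(R_1)$, hence $R_1\supseteq C[p,w]$. One checks that $R_1=p\,x\,w$, $R_2=q_2\,z\,q_1$ satisfies (i)--(iii) and that $6$ is the minimum total in either orientation, so this pair is special, with $x\in\pint(R_1)$. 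Here no vertex of $N(A)$ lies on $C(x,u_2]$ and $r_1^2=w\notin N(A)$, yet no shortening is possible --- so your ``blocked $\Rightarrow$ contradiction'' step fails --- and the purely-along-$C$ path you describe does not exist (in the bare cycle every path is an arc, and no arc meets $x,u_1,u_2$ in order and then an attachment of $A$). The conclusion of (i) still holds, but only via the chord: $x\,p\,u_1\,t\,q_1\,z\,w\,u_2\,s\,q_2$. Similarly, for (ii) the cycle avoiding $\pint(R_1)$ must be $u_1\,p\,v\,q_2\,s\,u_2\,w\,z\,q_1\,t\,u_1$, using both $A$ and the chord, so your plan of splicing a detour through $A$ between attachments flanking $\pint(R_1)$ is unavailable. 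The missing idea is exactly how to construct the paths when ends of $R_1,R_2$ are justified by chords from the opposite interior rather than by attachments of $A$ (an inductive or extremal argument along this chain of forcings); that is the heart of the proof, not a bookkeeping detail. A small additional slip: ``$C[x,u_1]$ concatenated with $C[u_1,u_2]$'' is not a path, since it revisits $x$.
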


\medskip
We also need to introduce $3$-planar graphs, which were first used by Seymour~\cite{S80}.
Such graphs were further studied by X. Yu~\cite{Y03}, from which we get the subsequent Definition~\ref{minidefinition}.

\begin{definition}
\label{3-planardefinition}
A $3$-planar graph $(G,\mathcal{A})$ consists of a graph $G$ and a family $\mathcal{A}=\{A_1,\ldots,A_k\}$ of pairwise disjoint subsets of $V(G)$ (allowing $\mathcal{A}=\emptyset$) such that

(i) for distinct $i, j \in \{1, \dots, k\}$, $N(A_i)\cap A_j=\emptyset$,

(ii) for $i \in \{1, \dots, k\}$, $|N(A_i)|\le 3$, and

(iii) if $p(G,\mathcal{A})$ denotes the graph obtained from $G$ by (for each $i$) deleting $A_i$ and adding new edges joining every pair of distinct non-adjacent vertices in $N(A_i)$, then $p(G,\mathcal{A})$ can be drawn in a closed disc $D$ with no pair of edges crossing such that, for each $A_i$ with $|N(A_i)|=3$, $N(A_i)$ induces a facial triangle in $p(G,\mathcal{A})$.
\end{definition}

If, in addition, $b_1,\ldots,b_n$ are some vertices in $G$ such that $b_i\notin A_j$ for any $A_j\in \mathcal{A}$ and $b_1,\ldots,b_n$ occur on the boundary of $D$ in that cyclic order, then we say that $(G,\mathcal{A},b_1,\ldots,b_n)$ is $3$-planar.
We will say that such a drawing is a {\em plane drawing of} $(G,\mathcal{A},b_1,\ldots,b_n)$.
We will say that $(G,b_1,\ldots,b_n)$ is $3$-planar if there exists a collection $\mathcal{A}$ so that $(G,\mathcal{A},b_1,\ldots,b_n)$ is $3$-planar.
If $(G,\emptyset,b_1,\ldots,b_n)$ is $3$-planar we will say that $(G,b_1,\ldots,b_n)$ is {\em planar}.

\begin{definition}
\label{minidefinition}
Let $(G,\mathcal{A})$ be $3$-planar, let $A\in\mathcal{A}$ with $N(A)=\{a_1,\ldots,a_m\}$ (where $m\le3$), and let $H=G[A\cup N(A)]$.
We say that $A$ is {\em minimal} if there is no collection $\mathcal{H}$ of pairwise disjoint subsets of $A$ such that $\mathcal{H}\ne\{A\}$ and $(H,\mathcal{H}, a_1,\ldots,a_m)$ is $3$-planar.
We say that $\mathcal{A}$ is minimal if every member of $\mathcal{A}$ is minimal.
\end{definition}

Seymour~\cite{S80} gave the following important characterization of $2$-linked graphs, a result which was also proved independently by Thomassen~\cite{T80}.
McCarty, Wang, and Yu~\cite{MWY18} provide us with two additional useful results.

\begin{theorem}[Seymour~\cite{S80}]\label{linkage}
Let $G$ be a graph and let $s_1, t_1, s_2, t_2$ be distinct vertices of $G$.
Then $G$ contains no $(\{s_1, t_1\}, \{s_2, t_2\})$-linkage if and only if $(G, s_1, s_2, t_1, t_2)$ is $3$-planar.
\end{theorem}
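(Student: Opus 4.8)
The plan is to prove the equivalence in two directions; the ``if'' direction is a short topological argument, while the ``only if'' direction is the substantial one and is handled by a minimal-counterexample reduction to the classical Jung--Seymour--Thomassen connectivity theorem for $2$-linkage.

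\emph{If $(G,s_1,s_2,t_1,t_2)$ is $3$-planar then $G$ has no linkage.} Fix a family $\mathcal A$ and a plane drawing of $p(G,\mathcal A)$ in a disc $D$ with $s_1,s_2,t_1,t_2$ appearing on $\partial D$ in this cyclic order. Suppose toward a contradiction that $P_1$ is an $s_1,t_1$-path and $P_2$ an $s_2,t_2$-path in $G$ with $V(P_1)\cap V(P_2)=\emptyset$. Since $|N(A_j)|\le 3$ for every $j$, and since $P_1$ and $P_2$ are vertex-disjoint paths, at most one of them can enter any given $A_j$, and it can do so at most once (an excursion into $A_j$ uses two distinct vertices of $N(A_j)$ as entry and exit, and two excursions, or one from each path, would need four). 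Replace each maximal subpath of $P_1\cup P_2$ lying inside an $A_j$ by the corresponding edge of $p(G,\mathcal A)$ joining its entry and exit vertices; when $|N(A_j)|=3$ this edge is a chord of the facial triangle $N(A_j)$ and can be drawn inside that face. The result is two disjoint curves in $D$, one from $s_1$ to $t_1$ and one from $s_2$ to $t_2$, whose four ends interleave on $\partial D$, so by the Jordan curve theorem they must cross --- a contradiction.

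\emph{If $G$ has no linkage then $(G,s_1,s_2,t_1,t_2)$ is $3$-planar.} I would argue by induction on $|V(G)|+|E(G)|$, taking a minimal counterexample $G$, and first carry out the standard reductions. We may assume $G$ is connected. If $G$ has a separation $(G_1,G_2)$ of order at most $3$ with all four terminals in $G_1$ and $A:=V(G_2)\setminus V(G_1)\neq\emptyset$, then $|N(A)|\le 3$; replacing $G_2$ by a clique on $V(G_1)\cap V(G_2)$ yields a smaller graph $G'$ still with no linkage (a linkage in $G'$ using a new edge reroutes through $G_2$ since $A$ has at most $3$ connections to $G_1$), so $G'$ is $3$-planar by minimality, and recording $A$ as a new member of the family --- which is legitimate because the $3$-planarity of $G'$ places $N(A)$ as a facial triangle (or on an edge, or at a vertex) --- makes $G$ $3$-planar, a contradiction. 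Separations meeting the terminals on both sides are handled similarly, recursing on each side after introducing suitably placed dummy terminals and gluing the resulting plane drawings along the cut. Iterating, we may assume every non-terminal vertex has degree at least $4$ and $G$ has no small separation that is not essential, so $G$ is, up to the terminals, internally $4$-connected. In this reduced situation we are exactly in the hypothesis of the Jung--Seymour--Thomassen dichotomy: such a $G$ either contains a $(\{s_1,t_1\},\{s_2,t_2\})$-linkage, contradicting our assumption, or $G$ is planar with $s_1,s_2,t_1,t_2$ on a common face in precisely the cyclic order $s_1,s_2,t_1,t_2$, in which case that embedding shows $(G,\emptyset,s_1,s_2,t_1,t_2)$ is $3$-planar. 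Either way $G$ is not a counterexample.

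\emph{The main obstacle.} Everything except the final dichotomy is bookkeeping: the family $\mathcal A$ exists precisely to record the low-order pieces of $G$ collapsed during the reductions, and the only delicate point there is checking that the neighbourhood of each collapsed piece can always be realized as a facial triangle, which one reads off from the $3$-planar structure handed up by induction. The genuinely hard content is the assertion that an (internally) $4$-connected graph with no forced-planar obstruction actually admits a $2$-linkage --- that is, exhibiting the two disjoint paths by an explicit rerouting, typically through a Kuratowski/Tutte-style analysis of $4$-connected non-planar graphs together with a careful augmenting-path argument. That is where essentially all of the difficulty lies, and it is the step I would expect to occupy the bulk of a full proof.
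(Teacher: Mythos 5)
You should first note that the paper offers no proof of this statement at all: Theorem~\ref{linkage} is quoted verbatim from Seymour~\cite{S80} (proved independently by Thomassen~\cite{T80}), so there is no in-paper argument to compare yours with; the comparison can only be with those original proofs. Your ``if'' direction is essentially correct and is the standard argument (one small slip: the edge of $p(G,\mathcal{A})$ replacing an excursion through $A_j$ is an edge of the triangle on $N(A_j)$, already present in the drawing, not a chord that needs to be added inside the face; also note the facial-triangle condition is not even needed for this direction).

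The ``only if'' direction, however, has a genuine gap: you outsource its entire content to a ``Jung--Seymour--Thomassen dichotomy'' asserting that an internally $4$-connected graph either has the linkage or is planar with $s_1,s_2,t_1,t_2$ on a common face in that cyclic order. No such citable lemma predates Seymour's and Thomassen's papers --- that dichotomy \emph{is} the hard core of the theorem you are trying to prove, so invoking it is essentially circular. What Jung~\cite{J70} actually gives is that $4$-connected non-planar graphs are $2$-linked; this neither covers the relative (internal) $4$-connectivity you arrive at after your reductions, nor yields the claim that in the planar case the four terminals can be brought onto a common face in the required order, and bridging those gaps is precisely where the bulk of Seymour's and Thomassen's work lies --- as you yourself concede in your final paragraph. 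In addition, two of your ``bookkeeping'' steps are asserted rather than proved: (i) when you collapse a piece $A$ with $|N(A)|\le 3$ and appeal to minimality, you must show the triangle on $N(A)$ can be made \emph{facial} in the inherited drawing of $p(G',\mathcal{A}')$ (it need not be facial as drawn; one has to re-embed or absorb whatever lies inside it while preserving conditions (i)--(iii) of Definition~\ref{3-planardefinition}); and (ii) separations of order $2$ or $3$ with terminals on both sides, which you dismiss with ``dummy terminals and gluing,'' require a careful case analysis to ensure the glued drawings keep $s_1,s_2,t_1,t_2$ on the outer boundary in the right cyclic order. As it stands the proposal is an outline of the known proof strategy with its decisive step assumed, not a proof.
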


\begin{lemma}[McCarty, Wang, and Yu~\cite{MWY18}]\label{3-planar}
Let $(G,\mathcal{A})$ be 3-planar so that $G$ is 3-connected and $\mathcal{A}$ is minimal. Fix some plane drawing of $p(G,\mathcal{A})$ and let $F$ be the set of vertices on the outer face. Suppose that $|F|\ge4$. Let $s_1,t_1,s_2$ be three distinct vertices in $F$, and let $t_2\in V(G)\setminus F$. Then $G$ has an $(\{s_1,t_1\},\{s_2,t_2\})$-linkage.
\end{lemma}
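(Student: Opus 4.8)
The plan is to argue by contradiction using Seymour's characterization, Theorem~\ref{linkage}. Assume $G$ has no $(\{s_1,t_1\},\{s_2,t_2\})$-linkage; then $(G,s_1,s_2,t_1,t_2)$ is $3$-planar, so there is a family $\mathcal B$, which we may take to be minimal, together with a plane drawing of $p(G,\mathcal B)$ in a disc $D'$ whose boundary carries $s_1,s_2,t_1,t_2$ in this cyclic order. A first round of reductions, all driven by the $3$-connectivity of $G$, tidies up both structures: every $N(B_j)$ and every $N(A_i)$ has exactly three vertices, and these bound a facial triangle (the facial part is built into the definition of $3$-planarity, and the count follows from $3$-connectivity together with $|F|\ge 4$, since a smaller attachment set would either be a cut of size $\le 2$ in $G$ or force $|F|\le 3$); minimality of $\mathcal A$ is what lets us treat each blob $A_i$ as atomic when we later need to route through it. Moreover both $p(G,\mathcal A)$ and $p(G,\mathcal B)$ are $2$-connected --- a cut vertex or $2$-cut of $p(G,\cdot)$ pulls back to a cut of the same size in $G$ once blobs are reattached --- so their outer boundaries are genuine cycles, say $C$ with $V(C)=F$, and $C'$; the few degenerate possibilities are excluded by the hypotheses or handled by hand.

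The core of the argument is to play the two drawings against each other: in the $\mathcal B$-drawing $t_2$ lies on the outer cycle $C'$, between $t_1$ and $s_1$, whereas in the $\mathcal A$-drawing $t_2\notin F$ lies strictly inside $D$ --- either as a vertex of $p(G,\mathcal A)$, or buried in a blob $A_i$ whose attachment $N(A_i)$ is a facial triangle. Using $3$-connectivity I would build a fan: three internally disjoint paths $Q_1,Q_2,Q_3$ in $G$ from $t_2$ to distinct vertices $c_1,c_2,c_3\in F$, meeting $C$ only at their ends. The cycle $C$ together with this fan cuts the interior of $D$ into three regions, and the cyclic positions of $s_1,t_1,s_2$ among the three arcs of $C$ determined by $c_1,c_2,c_3$ (together with which region a relevant blob occupies) leave only a bounded list of cases. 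In each case one aims to derive a contradiction: either one exhibits the forbidden linkage outright --- an $s_1,t_1$-path $P_1$ read off from a suitable arc of $C$, rerouted locally through a blob $A_i$ or through part of the fan when a blob attachment or fan endpoint obstructs that arc, together with an $s_2,t_2$-path $P_2$ that leaves $s_2$, runs along $C$ and/or dips into the region of $D$ on the ``$s_2$ side'' of the fan and finishes along one of $Q_1,Q_2,Q_3$ --- or else the case is one in which the existence of the $\mathcal B$-drawing (with $t_2$ on its outer face) is itself incompatible with $t_2$ being buried in the $\mathcal A$-drawing.

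The main obstacle is precisely this case analysis, and inside it the configuration where $t_2$ lies in a blob $A_i$ with $N(A_i)$ entirely on $C$ and $s_1,t_1$ separate $s_2$ from $N(A_i)$ along $C$. There $P_2$ is forced to enter $A_i$ through one of the three vertices of $N(A_i)$, while \emph{both} arcs of $C$ between $s_1$ and $t_1$ are blocked --- one carries $s_2$, the other carries vertices of $N(A_i)$ --- so $P_1$ cannot be taken as a plain arc of $C$; one must reroute $P_1$ through $A_i$ itself or through the fan and then, invoking $3$-connectivity to supply enough disjoint paths among the three attachment vertices, argue that $P_1$ and $P_2$ can be kept disjoint, and it is exactly here that the $\mathcal B$-structure coming from the contrary assumption has to be brought in to kill the remaining configurations. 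Coordinating the two routes whenever both want the interior of $C$ near a blob, and using the symmetries (swapping the two arcs, interchanging $s_1\leftrightarrow t_1$, permuting the fan endpoints) to collapse the subcases to a few representatives, is where the genuine work lies; Lemma~\ref{lem:ConnSep} on special separating pairs gives a convenient way to package the ``which arc, which attachment'' bookkeeping if one prefers a more structural organization of the same analysis.
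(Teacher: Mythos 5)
First, a point of reference: the paper does not prove this lemma at all --- it is imported verbatim from~\cite{MWY18} --- so there is no in-paper proof to match your approach against; your attempt has to stand or fall on its own. As it stands it does not stand: what you have written is a strategy outline, not a proof. The decisive content --- the case analysis in which the two drawings are ``played against each other'' --- is never executed. You say in each case ``one aims to derive a contradiction,'' you isolate the hardest configuration ($t_2$ buried in a blob $A_i$ whose attachment triangle blocks the $s_1,t_1$-arc that $P_1$ would like to use, with $s_2$ on the other arc), and then you state that ``it is exactly here that the $\mathcal B$-structure coming from the contrary assumption has to be brought in to kill the remaining configurations'' without ever showing how. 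But that is precisely the point of a contradiction argument via Theorem~\ref{linkage}: if the assumed $3$-planar structure $(G,\mathcal B,s_1,s_2,t_1,t_2)$ is never concretely confronted with the position of $t_2$ in the $\mathcal A$-drawing, nothing has been proved. Admitting that this ``is where the genuine work lies'' is an accurate self-assessment, and it means the core of the lemma is missing.

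Two further gaps in what is sketched. (1) Your fan argument treats the three internally disjoint $t_2$--$F$ paths $Q_1,Q_2,Q_3$ of $G$ as if they were drawn in the disc $D$ and partitioned its interior into three regions; but paths of $G$ are not drawn --- only $p(G,\mathcal A)$ is. One must project each $Q_k$ into $p(G,\mathcal A)$ (a traversal of a blob $A_j$ becomes an edge of the attachment triangle, and one has to check that distinct $Q_k$'s project to internally disjoint paths, which uses $|N(A_j)|\le 3$), and it is here, not merely as a convenience, that minimality of $\mathcal A$ and the facial-triangle condition enter; none of this bookkeeping is set up. (2) The naive choice of $P_1$ as ``an arc of $C$, rerouted locally'' is exactly where counterexample-style configurations live (e.g.\ $t_2$, or its blob, attached only to interior vertices of the arc avoiding $s_2$), so ``rerouted locally'' hides another case analysis of the same difficulty as the one you deferred. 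In short, the plan is plausible but the proof is not there: the topological correspondence between $G$ and $p(G,\mathcal A)$ must be made precise, and the case analysis --- in particular the use of the $\mathcal B$-structure obtained from Theorem~\ref{linkage} --- must actually be carried out.
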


\begin{lemma}[McCarty, Wang, and Yu~\cite{MWY18}]\label{discharge}
Let $H$ be a $3$-connected planar graph with some fixed planar drawing. Let $Z$ be the outer cycle of $H$ and let $x$ and $y$ be distinct vertices in $V(Z)$. Then either

(i) there exists $v\in V(H)\setminus V(Z)$ with $d(v)\le6$, or

(ii) there exists $uv\in E(Z)$ so that $\{u,v\}\cap \{x,y\}=\emptyset$ and $d(u)+d(v)\le7$.
\end{lemma}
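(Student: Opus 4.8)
The plan is to argue by contradiction, using only Euler's formula and a short degree count on the outer cycle; no elaborate discharging is required. So suppose neither (i) nor (ii) holds: every vertex in $V(H)\setminus V(Z)$ has degree at least $7$, and every edge $uv\in E(Z)$ with $\{u,v\}\cap\{x,y\}=\emptyset$ has $d(u)+d(v)\ge 8$. Since $H$ is $3$-connected it is simple with minimum degree at least $3$, and since it is $2$-connected with a fixed plane drawing the boundary of its outer face is precisely the cycle $Z$; put $m=|V(Z)|\ge 3$, so the outer face has degree $m$.

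The first step is to bound $\sum_{v\in V(Z)}d(v)$. Give each vertex $v$ the charge $d(v)-6$ and each face $f$ the charge $2d(f)-6$; by Euler's formula the total charge is $-12$. Every face has degree at least $3$, so every interior face has nonnegative charge, and every vertex not on $Z$ is interior and hence has positive charge by our assumption. Discarding these nonnegative contributions from the total leaves
\[
\sum_{v\in V(Z)}\bigl(d(v)-6\bigr)+\bigl(2m-6\bigr)\le -12 ,
\]
where $2m-6$ is the charge of the outer face, and this rearranges to the key inequality $\sum_{v\in V(Z)}d(v)\le 4m-6$. (The point here is to keep the large positive charge $2m-6$ of the outer face grouped with the possibly very negative charges of the cycle vertices, rather than discarding it.)

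The second step contradicts this by summing degrees over $E(Z)$. Since each vertex of $Z$ lies on exactly two edges of $Z$,
\[
\sum_{uv\in E(Z)}\bigl(d(u)+d(v)\bigr)=2\sum_{v\in V(Z)}d(v)\le 8m-12 .
\]
On the other hand, at most two edges of $Z$ contain $x$ and at most two contain $y$, so the number $k$ of edges of $Z$ meeting $\{x,y\}$ satisfies $k\le 4$; each of the other $m-k$ edges $uv$ has $d(u)+d(v)\ge 8$ by our assumption, and each of the $k$ exceptional edges has $d(u)+d(v)\ge 6$ by minimum degree, so $\sum_{uv\in E(Z)}(d(u)+d(v))\ge 8(m-k)+6k=8m-2k\ge 8m-8$. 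Combining the two displays gives $8m-8\le 8m-12$, a contradiction. I expect the only delicate point to be the bookkeeping around $x$ and $y$ — correctly counting the at most four edges of $Z$ they touch, including the case where $x$ and $y$ are adjacent on $Z$ and also the small values $m=3,4$ — but since $0\le k\le\min\{m,4\}$ in all cases, the estimate $8(m-k)+6k\ge 8m-8$ always holds and the argument goes through uniformly.
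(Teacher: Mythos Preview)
Your proof is correct. The paper does not actually prove this lemma --- it is quoted without proof from McCarty, Wang, and Yu~\cite{MWY18} --- so there is no in-paper argument to compare against; your Euler-formula count with vertex charges $d(v)-6$ and face charges $2d(f)-6$ is the standard way to obtain the bound $\sum_{v\in V(Z)}d(v)\le 4m-6$, and your edge-sum lower bound handling the at most four exceptional edges at $x,y$ cleanly finishes it.
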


\section{Flowers and their properties}

\begin{definition}\label{flower}
Let $\{u_1, u_2, u_3, u_4\} \subseteq V(G)$.
Then a $(u_1, u_2, u_3, u_4)$-flower is an ordered list $F = (C_1\com C_2\com C_3\com P_1\com P_2\com P_3)$ such that

(i) $C_1, C_2, C_3$ are cycles such that $u_1 \in V(C_1)$, $u_3 \in V(C_2)$, $V(C_1) \cap V(C_2) = \{u_2\}$, and $C_3$ is disjoint from $C_1$ and $C_2$,

(ii) for each $i \in \{1, 2, 3\}$, $P_i$ is a path in $G$ from $u_i$ to some vertex $v_i \in V(C_3)$, and $P_i$ is internally disjoint from $V(C_1) \cup V(C_2) \cup V(C_3)$, and

(iii) $P_1, P_2, P_3$ are pairwise vertex disjoint, and $v_1, v_2, v_3, u_4$ occur on $C_3$ in order.
\end{definition}

An illustration of a flower is given in Figure~\ref{flower}.

\begin{figure}[H]
\includegraphics[scale=0.65]{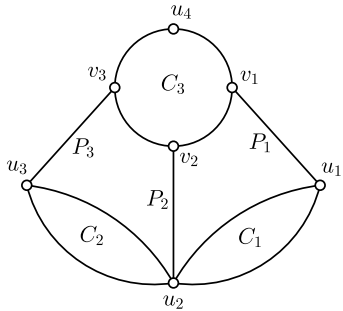}
\caption{A $(u_1,u_2,u_3,u_4)$-flower.}
\label{flower}
\end{figure}

\begin{lemma}\label{lem:flower}
Suppose $G$ is a $7$-connected graph such that for some four vertices $u_2\com u_1\com u_3\com u_4 \in V(G)$, $G$ contains no walk through $u_2, u_1, u_3, u_2, u_4$ in order.
Further suppose that there exist internally disjoint paths $Q_1, \dots, Q_7$ in $G$ avoiding $u_4$ such that $Q_1$ has ends $u_1$ and $u_3$; $Q_2, Q_3, Q_4$ have ends $u_1$ and $u_2$; and $Q_5, Q_6, Q_7$ have ends $u_2$ and $u_3$.
Then, up to symmetry between $u_1$ and $u_3$, $G$ contains a $(u_1, u_2, u_3, u_4)$-flower.
\end{lemma}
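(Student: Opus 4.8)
The plan is to assemble the flower from the seven given paths $Q_1,\dots,Q_7$, augmenting them with a small amount of extra structure obtained from the $7$-connectivity of $G$, and to dispose of the unavoidable failure cases either by re-choosing the $Q_i$ or by exhibiting the excluded walk. Throughout I would fix $Q_1,\dots,Q_7$ so that $\sum_i |V(Q_i)|$ is as small as possible, a minimality assumption used repeatedly to rule out short re-routings. The easy part is the construction of the two glued cycles: set $C_1:=Q_2\cup Q_3$ and $C_2:=Q_5\cup Q_6$. Since $Q_2,Q_3,Q_5,Q_6$ are internally disjoint and only $u_2$ lies in both pairs, $C_1$ and $C_2$ are cycles with $u_1\in V(C_1)$, $u_3\in V(C_2)$, and $V(C_1)\cap V(C_2)=\{u_2\}$. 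The leftover paths are the $u_1,u_3$-path $Q_1$, the $u_1,u_2$-path $Q_4$, and the $u_2,u_3$-path $Q_7$; these are internally disjoint from one another and from $C_1\cup C_2$, and $D:=Q_1\cup Q_4\cup Q_7$ is a cycle through $u_1,u_2,u_3$. What remains is to produce a cycle $C_3$ through $u_4$ that is disjoint from $C_1\cup C_2$, together with pairwise disjoint paths $P_1,P_2,P_3$ from $u_1,u_2,u_3$ to $C_3$, each internally disjoint from $C_1\cup C_2\cup C_3$, with attachment points in the cyclic order $v_1,v_2,v_3,u_4$ on $C_3$.

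The heart of the proof is attaching $u_4$. For this I would work inside $G':=G-\big((V(C_1)\cup V(C_2))\setminus\{u_1,u_2,u_3\}\big)$, which still contains $Q_1,Q_4,Q_7$ and $u_4$; the initial subpaths of $Q_1,Q_4$ at $u_1$, of $Q_4,Q_7$ at $u_2$, and of $Q_1,Q_7$ at $u_3$ give three disjoint stubs out of $u_1,u_2,u_3$ into $G'$. The goal is to join the far ends of these stubs onto a common cycle through $u_4$ in $G'-\{u_1,u_2,u_3\}$ with the correct cyclic order. I would realize this by choosing an auxiliary cycle $C_3^{0}$ through $u_4$, taking a set $A$ that encodes the part of $G'$ reaching the stubs, and applying Lemma~\ref{lem:ConnSep} to a special separating pair for $C_3^{0}$ and $A$: parts (ii) and (iii) of that lemma then hand back the cycle $C_3$ together with two of the three tripod legs routed through $u_4$ in the right order, while the third leg and the placement of $v_1,v_2,v_3$ relative to $u_4$ are obtained after possibly exchanging the roles of $u_1$ and $u_3$ --- the one place the stated symmetry is needed. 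When no such routing exists, the obstruction is a small cut in $G'$; I would translate the absence of the relevant linkage into $3$-planarity of a suitable subgraph via Theorem~\ref{linkage}, and then use Lemmas~\ref{3-planar} and \ref{discharge} to locate a vertex of degree at most $6$ or an edge with degree sum at most $7$, contradicting the minimum degree $\ge7$ forced by $7$-connectivity.

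The main obstacle is precisely this attaching step: deleting all of $C_1\cup C_2$ from a merely $7$-connected graph destroys connectivity in general, so $G'$ need not link $u_4$ to $\{u_1,u_2,u_3\}$ through three disjoint paths, and one must control every way this can fail. Here both hypotheses earn their keep. Minimality of the $Q_i$ eliminates the failures that a shorter re-routing of the existing paths would repair. The remaining failures are those in which a small separator $S$ of $G'$ cuts $u_4$ off from the stubs; but in that situation one can route through $C_1$ (supplying a $u_2,u_1$ portion), through $C_2$ (supplying a $u_3,u_2$ portion), and through $S$ together with pieces of $D$ to reach $u_4$, thereby assembling a walk in $G$ through $u_2,u_1,u_3,u_2,u_4$ in order --- contradicting the hypothesis. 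I therefore expect the bulk of the argument to be a case analysis of these separators, each case terminating in either a forbidden walk, a low-degree vertex or edge from the $3$-planar machinery, or a successful extension of the tripod. Once $C_3$ and $P_1,P_2,P_3$ are in hand, verifying the flower axioms --- in particular the cyclic order $v_1,v_2,v_3,u_4$ --- is routine.
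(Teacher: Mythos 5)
The core of your plan---delete $(V(C_1)\cup V(C_2))\setminus\{u_1,u_2,u_3\}$ and attach $u_4$ inside the remainder $G'$---has a genuine gap, and it is exactly at the step you flag as the ``main obstacle.'' Seven-connectivity of $G$ gives no control whatsoever over $G'$: the two cycles can be arbitrarily large, and $u_4$ may be cut off from the three stubs in $G'$ by a very small set, or lie in a different component altogether. Your fallback, that any small separator $S$ of $G'$ lets one ``route through $C_1$, through $C_2$, and through $S$ together with pieces of $D$ to reach $u_4$'' and so produce the forbidden walk, is asserted without any argument; producing a walk through $u_2,u_1,u_3,u_2,u_4$ in order is precisely the kite linkage the lemma is fighting for, so this step begs the question. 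The auxiliary machinery you invoke does not rescue it: Lemma~\ref{lem:ConnSep} needs a specified cycle $C$, a set $A\subseteq V(G)\setminus V(C)$ with $G[A]$ connected, and a special separating pair, none of which you set up concretely, and it returns paths inside $G[V(C)\cup A]$, not ``two tripod legs routed through $u_4$ in the right order''; Lemmas~\ref{3-planar} and~\ref{discharge} require a $3$-connected (planar) graph, which is not available at this stage---in the paper these tools enter only after Proposition~\ref{prop:Conn} has established $3$-connectivity of $H$ for a carefully optimized flower, not in the proof of Lemma~\ref{lem:flower}. Finally, you call the verification of the cyclic order $v_1,v_2,v_3,u_4$ on $C_3$ ``routine,'' but this is the delicate point where the stated symmetry between $u_1$ and $u_3$ must actually be used.

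The paper's proof avoids all of this by never deleting the cycles. It applies $7$-connectivity in $G$ itself to obtain five internally disjoint $u_4$--$u_2$ paths $P_1,\dots,P_5$ avoiding $u_1,u_3$, and shows that the first vertex of each $P_i$ on $\bigcup_{j=1}^{7}Q_j$ must lie in $\pint(Q_1)$, since otherwise a kite linkage (the forbidden walk) is immediate. Labelling these first contacts $w_1,\dots,w_5$ in order along $Q_1$, it takes $C_3=P_1[u_4,w_1]\cup Q_1[w_1,w_5]\cup P_5[w_5,u_4]$, and builds the three legs from the remaining pieces of $Q_1$, the $P_i$'s, and one of $Q_2,\dots,Q_7$; the cycles $C_1,C_2$ of the flower are then chosen adaptively from the remaining four $u_1u_2$- and $u_2u_3$-paths (e.g.\ $Q_5\cup Q_6$ and $Q_3\cup Q_4$), rather than being fixed in advance as in your first step. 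A short case analysis on where the attachment vertices fall on $C_3$ either yields the forbidden walk or the flower with the correct cyclic order, and this is where the $u_1$--$u_3$ symmetry is spent. Your proposal would need this (or some equally concrete) mechanism for reaching $u_4$; as written, the attachment step does not go through.
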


\begin{proof}
Since $G$ is $7$-connected, there exist $5$ internally disjoint paths $P_1, \ldots, P_5$ from $u_4$ to $u_2$ which avoid $u_1$ and $u_3$.
Throughout the remainder of this proof, we will assume the paths $P_i$ are read starting at $u_4$.

For each $i \in [5]$, we may assume that the first vertex of $P_i$ meeting $\cup_{j = 1}^7 Q_j$ belongs to $\pint(Q_1)$.
For otherwise, the first vertex $q$ of some $P_i$ meeting $\cup_{j = 1}^7 Q_j$ belongs to $V(Q_k) \setminus \{u_1, u_3\}$ for some $k \in \{2, \dots, 7\}$, say $k = 2$.
Then $Q_3 \cup Q_1 \cup Q_5 \cup Q_2[u_2, q] \cup P_i[q, u_4]$ is a kite-linkage of $u_2\com u_1\com u_3\com u_4$, a contradiction.

Thus for $i \in [5]$, let $w_i$ be the first vertex of $P_i$ meeting $Q_1$.
By relabelling if necessary, we may assume that $w_1, \ldots, w_5$ occur on $Q_1(u_1, u_3)$ in order.
For $i \in [5]$, additionally let $w_i''$ be the first vertex of $P_i$ meeting $\cup_{j = 2}^7 Q_j$ and $w_i'$ the last vertex of $P_i[u_4, w_i'']$ meeting $Q_1[w_1, w_5]$.

Let $u_i'$ be the vertex of $(\cup_{j = 1}^5 P_j[u_4, w_j'']) \cap \pint(Q_1)$ closest to $u_i$ on $Q_1$ for each $i \in \{1, 3\}$.
If $u_1', u_3'$ are on the same path $P_i$ for some $i \in [5]$, then we can find a path $P = Q_1[u_1, u_1'] \cup P_i[u_1', u_3'] \cup Q_1[u_3', u_3]$ from $u_1$ to $u_3$.   Thus a kite-linkage of $u_2\com u_1\com u_3\com u_4$ can easily be found by using $P, P_j, \cup_{k = 2}^7 Q_k$, where $i \ne j$, a contradiction.

So we may assume that $u_1' \in V(P_i), u_3' \in V(P_j)$ with $i \ne j$.
For each $k \in \{i, j\}$, let $u_k''$ be the vertex such that $V(P_k[u_k'',u_k'])\cap V(Q_1[w_1, w_5]) = \{u_k''\}$.
Now choose a path $P_k$ such that $k \notin \{i, j\}$.  Let $C_3$ be the cycle formed by $P_1[u_4,w_1] \cup Q_1[w_1,w_5] \cup P_5[w_5,u_4]$, and orient $C_3$ so that $u_4, w_1, w_5$ occur in order.
By symmetry between $u_1$ and $u_3$, either $u_1'' \in V(C_3(w_k', w_5])$ or $u_1'' \in V(C_3[w_1, w_k'))$ and $u_3'' \in V(C_3(w_k', w_5])$.
In the former case, we can find a path $P = C_3(u_4, w_k'] \cup P_k[w_k', u_2]$ from $u_4$ to $u_2$ and a path $P' = Q_1[u_1, u_1'] \cup P_i[u_1', u_1''] \cup Q_1[u_1'',u_3]$ from $u_1$ to $u_3$.
Then we can find a kite-linkage of $u_2\com u_1\com u_3\com u_4$ by using $P, P', \cup_{\ell = 2}^7 Q_\ell$, a contradiction.
In the latter case, by symmetry we may assume that $w_k''$ belongs to $Q_2$.
Then let $C_1 = Q_5 \cup Q_6$, $C_2 = Q_3 \cup Q_4$, $P_1 = Q_1[u_1, u_1'] \cup P_i[u_1', u_1'']$, $P_3 = Q_1[u_3, u_3'] \cup P_j[u_3', u_3'']$, and $P_2 = Q_2[u_2, w_k''] \cup P_k[w_k'', w_k']$.
Then $F = (C_1\com C_2\com C_3\com P_1\com P_2\com P_3)$ is the desired flower.
\end{proof}

\begin{theorem}\label{flowerthm}
Suppose $G$ is an $8$-connected graph such that for some four vertices $u_2\com u_1\com u_3\com u_4 \in V(G)$, $G$ contains no walk through $u_2, u_1, u_3, u_2, u_4$ in order.
Then, up to symmetry between $u_1$ and $u_3$, $G$ contains a $(u_1, u_2, u_3, u_4)$-flower.
\end{theorem}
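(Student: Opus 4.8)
The plan is to derive Theorem~\ref{flowerthm} from Lemma~\ref{lem:flower} by using the fat-triangle linkage of Theorem~\ref{thm:fattriangle} to manufacture the seven internally disjoint paths that Lemma~\ref{lem:flower} takes as a hypothesis.

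First I would pass to $G' := G - u_4$. Since $G$ is $8$-connected, $G'$ is $7$-connected. Apply Theorem~\ref{thm:fattriangle} to $G'$ with $(k_1,k_2,k_3) = (3,3,1)$ and $k = 7$: the requirements $k_1, k_2 > 0$, $k_3 \ge 0$, and $k_1 + k_2 + k_3 = 7 \le 7$ hold, so $G'$ is $F_{3,3,1}$-linked. Now view $H = F_{3,3,1}$ as the multigraph on $\{w_1,w_2,w_3\}$ with three edges between $w_1$ and $w_2$, three edges between $w_2$ and $w_3$, and one edge between $w_3$ and $w_1$, and take the injective map $\phi\colon V(H)\to V(G')$ given by $\phi(w_1) = u_1$, $\phi(w_2) = u_2$, $\phi(w_3) = u_3$ (these are distinct and avoid $u_4$, hence lie in $V(G')$). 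The resulting $H$-subdivision in $G'$ is precisely a collection of seven internally disjoint paths: one path with ends $u_1, u_3$ (call it $Q_1$), three paths with ends $u_1, u_2$ (call them $Q_2, Q_3, Q_4$), and three paths with ends $u_2, u_3$ (call them $Q_5, Q_6, Q_7$). All of these lie in $G' \subseteq G$, so they avoid $u_4$.

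Finally I would invoke Lemma~\ref{lem:flower}. Its hypotheses are now all in place: $G$ is $7$-connected (being $8$-connected); by assumption $G$ contains no walk through $u_2, u_1, u_3, u_2, u_4$ in order; and the paths $Q_1,\dots,Q_7$ just produced are exactly the internally disjoint paths avoiding $u_4$ with the prescribed ends demanded by the lemma. Hence, up to symmetry between $u_1$ and $u_3$, $G$ contains a $(u_1,u_2,u_3,u_4)$-flower, completing the proof.

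As for difficulty, essentially all of the work is already encapsulated in Theorem~\ref{thm:fattriangle} and Lemma~\ref{lem:flower}, so the only points needing care are (a) observing that deleting $u_4$ leaves a $7$-connected graph, which is exactly where the extra unit of connectivity in the hypothesis of Theorem~\ref{flowerthm} is consumed, and (b) matching the edge multiplicities of $F_{3,3,1}$ with the roles of $u_1,u_2,u_3$ in Lemma~\ref{lem:flower} (three $u_1u_2$-paths, three $u_2u_3$-paths, one $u_1u_3$-path). I do not expect any genuine combinatorial obstacle beyond this bookkeeping.
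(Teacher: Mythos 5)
Your proposal is correct and is essentially identical to the paper's own proof: both delete $u_4$ to obtain a $7$-connected graph, apply Theorem~\ref{thm:fattriangle} with $(k_1,k_2,k_3)=(3,3,1)$ to get the seven internally disjoint paths avoiding $u_4$, and then invoke Lemma~\ref{lem:flower}. Your write-up is in fact slightly more careful than the paper's, which states the $F_{3,3,1}$-linkage for $G$ rather than explicitly for $G-u_4$.
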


\begin{proof}
Since $G$ is $8$-connected, $G - u_4$ is $7$-connected.
By Theorem~\ref{thm:fattriangle}, $G$ is $F_{3, 3, 1}$-linked.
Thus there exist internally disjoint paths $Q_1, \dots, Q_7$ such that $Q_1$ has ends $u_1$ and $u_3$; $Q_2, Q_3, Q_4$ have ends $u_1$ and $u_2$; and $Q_5, Q_6, Q_7$ have ends $u_2$ and $u_3$.
Therefore, by Lemma~\ref{lem:flower} $G$ contains a $(u_1, u_2, u_3, u_4)$-flower.
\end{proof}


By proceeding along similar lines as in~\cite{MWY18}, we can prove the following properties of the flower.

\begin{proposition}\label{prop:Conn}
Suppose $G$ is a $7$-connected graph such that for some four vertices $u_2\com u_1\com u_3\com u_4 \in V(G)$, $G$ contains no walk through $u_2, u_1, u_3, u_2, u_4$ in order.
Further suppose that $G$ has a $(u_1, u_2, u_3, u_4)$-flower.
Then $G$ has a $(u_1, u_2, u_3, u_4)$-flower $F = (C_1\com C_2\com C_3\com P_1\com P_2\com P_3)$ such that

(i) for each $i \in [2]$, $G[V(C_i)]$ has no cycle through $u_i$ and $u_{i + 1}$ with fewer vertices than $C_i$,

(ii) for every $j \in [3]$, $|V(P_j)| = 2$, and

(iii) the graph $H = G - (V(C_1) \cup V(C_2))$ is $3$-connected.
\end{proposition}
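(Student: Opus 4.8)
The plan is to start from the flower guaranteed by the hypothesis, replace it by one that is extremal for a suitable lexicographic order, and then read off (i)--(iii) from extremality, following the skeleton analysis of \cite{MWY18}. Precisely, among all $(u_1,u_2,u_3,u_4)$-flowers $(C_1,C_2,C_3,P_1,P_2,P_3)$ of $G$, I would first take one with $|V(C_1)|+|V(C_2)|$ minimum; subject to that, with $|V(P_1)|+|V(P_2)|+|V(P_3)|$ minimum; and subject to that, with $|V(C_1)\cup V(C_2)\cup V(C_3)|$ maximum (so the leftover $V(G)\setminus(V(C_1)\cup V(C_2)\cup V(C_3))$ is smallest). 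Property (i) is then immediate: any cycle $C_i'\subseteq G[V(C_i)]$ through $u_i,u_{i+1}$ with $|V(C_i')|<|V(C_i)|$ could replace $C_i$ in the flower --- shrinking $C_i$ only makes the disjointness requirements in the definition of a flower easier, and the paths $P_j$ are untouched --- contradicting minimality of the leading parameter.

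For (ii), suppose some $P_j$, say $P_1$, has an interior vertex, and let $x$ be the neighbour of $u_1$ on $P_1$; note $x\notin V(C_1)\cup V(C_2)\cup V(C_3)$. The goal is to absorb $x$ into $C_3$. Using $7$-connectivity I would find a path $P'$ from $x$ to $V(C_3)\setminus\{v_1\}$ that is internally disjoint from $V(C_1)\cup V(C_2)\cup V(C_3)$, disjoint from $P_2\cup P_3$, and meets $P_1[x,v_1]$ only at $x$; then the cycle $C_3'$ obtained from $P_1[x,v_1]$, $P'$, and the arc of $C_3$ joining $v_1$ to the far end of $P'$ that still carries $v_2,v_3$ and $u_4$ turns the flower into one with $P_1$ replaced by the single edge $u_1x$ and with $C_1,C_2$ unchanged, lowering $\sum_j|V(P_j)|$ --- a contradiction. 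The point to watch is that $x,v_2,v_3,u_4$ must still occur on $C_3'$ in cyclic order; if every admissible $P'$ violates this, one can combine $P_2,P_3$ with the fat-triangle paths as in the proof of Lemma~\ref{lem:flower} to obtain a walk through $u_2,u_1,u_3,u_2,u_4$ in order, contradicting the hypothesis.

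The substantive part, and the step I expect to be the main obstacle, is (iii): that $H=G-(V(C_1)\cup V(C_2))$ is $3$-connected. Since $C_3\subseteq H$ and $u_4\in V(H)$ we have $|V(H)|\ge3$; the small cases ($H$ essentially a triangle, which can occur only if $V(G)=V(C_1)\cup V(C_2)\cup V(C_3)$) are handled directly, so assume $H$ has a cutset $S$ with $|S|\le2$ and components $D_1,D_2,\dots$ of $H-S$. For any such component $D$, the set $N_G(V(D))\setminus V(D)$ is contained in $S\cup V(C_1)\cup V(C_2)$ and separates $V(D)$ from the remaining components of $H-S$ in $G$, hence has size at least $7$; so $D$ has at least $7-|S|\ge5$ neighbours in $V(C_1)\cup V(C_2)$ and, being connected, contains a path joining two distinct vertices of $V(C_1)\cup V(C_2)$ internally disjoint from $C_1\cup C_2$. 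Picking a component $D$ not containing $u_4$ (one exists as $H-S$ has at least two components), I would use its many attachments, together with the minimality of $C_1,C_2$ from (i) and Lemma~\ref{lem:ConnSep}, to reroute $C_1$ or $C_2$ through part of $D$ while keeping $|V(C_1)|+|V(C_2)|$ unchanged and strictly enlarging $V(C_1)\cup V(C_2)\cup V(C_3)$, contradicting the last extremal parameter; and when such a reroute is blocked --- roughly, when $S$ meets $C_3$ and splits it across two components --- the same attachments and paths should instead be assembled into a walk through $u_2,u_1,u_3,u_2,u_4$ in order, again a contradiction. Organizing this case analysis (over the position of $S$ relative to $u_1,u_2,u_3$, the $v_i$, and $u_4$, and the way $C_3$ is split) cleanly is the technical heart of the proof, and it is precisely here that one must carefully adapt the corresponding argument for skeletons in \cite{MWY18}.
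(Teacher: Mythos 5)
Your proposal sets up an extremal choice and correctly disposes of (i), but the substance of the proposition --- parts (ii) and (iii) --- is deferred rather than proved, and the specific extremal order you chose undermines the arguments you plan to run. For (ii), ``using $7$-connectivity I would find a path $P'$ from $x$ to $V(C_3)\setminus\{v_1\}$'' avoiding $C_1\cup C_2\cup C_3\cup P_2\cup P_3$ is not something connectivity gives you: the sets to be avoided are unbounded, so one must instead analyze where the connections from $\pint(P_1)$ actually land and, when they land on $C_1\cup C_2$ or on the ``wrong'' arc of $C_3$, build a forbidden walk through $u_2,u_1,u_3,u_2,u_4$; this is exactly the separating-pair/Lemma~\ref{lem:ConnSep} analysis in the paper, and it also requires first ruling out components of $H$ hanging off the flower (the paper's Claim~\ref{empty}), which your plan never addresses. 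Your fallback ``combine $P_2,P_3$ with the fat-triangle paths as in the proof of Lemma~\ref{lem:flower}'' is not available here: the hypotheses of the proposition provide only the flower, not the paths $Q_1,\dots,Q_7$, so the kite-linkage has to be assembled from $C_1[u_1,u_2]$, $C_2[u_2,u_3]$, the flower paths, and the newly found connections.

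For (iii), your intended contradiction is with the third criterion (enlarging $V(C_1)\cup V(C_2)\cup V(C_3)$) ``while keeping $|V(C_1)|+|V(C_2)|$ unchanged,'' but there is no reason such a size-preserving reroute of $C_1$ or $C_2$ through the component $D$ exists; absorbing vertices of $D$ into $C_1$ or $C_2$ (e.g.\ via Lemma~\ref{lem:ConnSep}(ii)) generically increases $|V(C_1)|+|V(C_2)|$, which is your leading \emph{minimized} parameter, so the resulting flower is not better in your order and no contradiction arises. This is why the paper does not minimize cycle sizes at all: it minimizes $\sum_j|V(P_j)|$, then maximizes the block $B$ of $H$ containing $C_3$ and the attached components lexicographically, so that cycle-replacement moves (which leave the $P_j$ untouched) do yield extremality contradictions, while all contradictions in the $3$-connectivity step come from explicitly constructed kite-linkages via special separating pairs on $C_1$ and $C_2$ and Lemma~\ref{lem:ConnSep}(i)/(iii), organized by minimizing $|V(A)\cap\{v_1,v_2,v_3,u_4\}|$ over $2$-cuts. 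Your sketch acknowledges this case analysis as ``the technical heart'' but does not carry out any of it, and it also presupposes $H$ is $2$-connected (needed even to speak of the $2$-cut structure and to find the disjoint paths to $v_3$ and $u_4$), which in the paper is a consequence of Claims~\ref{empty} and~\ref{P1P3} rather than a given. As it stands the proposal is a plausible outline of the known strategy, not a proof, and its extremal framework would need to be replaced (or substantially augmented) before the missing steps could be completed.
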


\noindent{\em Proof.}
Let $F = (C_1\com C_2\com C_3\com P_1\com P_2\com P_3)$ be a flower in $G$.
Let $B$ be the block of $H$ containing $C_3$.
Let $B_1, B_2, \dots, B_m$ be the components of $H - V(B)$ with non-empty intersection with $V(F)$, and let $A_1, A_2, \dots, A_n$ be the components of $H - V(B)$ with empty intersection with $V(F)$.
We may assume that $F$ is chosen so that

(1) $\sum_{i = 1}^3 |V(P_i)|$ is minimum,

(2) subject to (1), $|V(B)|$ is maximum,

(3) subject to (2), $(|V(B_1)|, \dots, |V(B_m)|)$ is maximal with respect to lexicographic ordering, and

(4) subject to (3), $(|V(A_1)|, \dots, |V(A_n)|)$ is maximal with respect to lexicographic ordering.

Let $C_1$ and $C_2$ have arbitrary fixed orientations, and fix an orientation of $C_3$ so that $v_1, v_2, v_3, u_4$ occur in order.
We now prove the statement via a series of claims.

\begin{claim}\label{edge}
For each $i \in [2]$, $N(C_i \setminus \{u_2\}) \cap V(C_{3 - i}) =\emptyset$.
\end{claim}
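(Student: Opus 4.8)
The plan is to prove Claim~\ref{edge} by contradiction, exploiting the minimality/maximality conditions (1)--(4) on the flower $F$ together with the hypothesis that $G$ has no walk through $u_2, u_1, u_3, u_2, u_4$ in order (which, as the earlier part of the paper makes clear, is precisely the negation of a kite-linkage of $u_2, u_1, u_3, u_4$). So suppose for contradiction that, say, there is an edge $xy$ with $x \in V(C_1)\setminus\{u_2\}$ and $y \in V(C_2)$. I would split into two cases according to whether $y = u_2$ or $y \neq u_2$; in fact the interesting case is $y \in V(C_2)\setminus\{u_2\}$, since an edge to $u_2$ can be handled by the optimality of $C_1$ (condition (i) of the Proposition, which we are in the process of establishing, but here we only need condition (1) about $\sum|V(P_i)|$ and the structure of $C_1$).

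The key step is: an edge $xy$ between $C_1\setminus\{u_2\}$ and $C_2\setminus\{u_2\}$ lets us build a short cycle through $u_1$ and $u_3$ avoiding $u_2$, or else directly reroute to produce a forbidden walk. Concretely, $C_1$ contains two internally disjoint $u_1, u_2$-paths, and $C_2$ contains two internally disjoint $u_2, u_3$-paths. Picking the $u_1, x$-subpath of $C_1$ not through $u_2$ (one of the two arcs of $C_1$ from $u_1$ to $x$ avoids $u_2$ unless $x$ separates them on $C_1$, and even then one arc works), the edge $xy$, and the $y, u_3$-subpath of $C_2$ avoiding $u_2$, together with one of the arcs of $C_1$ and one of the arcs of $C_2$ through $u_2$, one assembles a walk visiting $u_2, u_1, u_3$ and then continuing via $P_1, C_3, P_2$ back to $u_2$ and then out $P_3$-style to $u_4$ — i.e., exactly the forbidden walk $u_2, u_1, u_3, u_2, u_4$. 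The careful bookkeeping is to make sure the pieces used are internally disjoint where the definition of a walk-through-in-order requires it (a walk may repeat vertices, which gives some slack, but one still needs the $u_3 \to u_2$ and $u_2 \to u_4$ legs to be realizable). I would use the flower structure: $P_1, P_2, P_3$ hit $C_3$ at $v_1, v_2, v_3$ in order, so $C_3[v_1, v_3]$ on the side away from $u_4$ together with $P_1, P_3$ joins $u_1$ to $u_3$, while $C_3[v_2, u_4]$ together with $P_2$ joins $u_2$ to $u_4$; combining with $C_1$ (through $u_2$) and $C_2$ (through $u_2$) gives the needed closed-up walk through $u_2, u_1, u_3, u_2, u_4$ in order, contradiction.

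For the subcase $y = u_2$: then $xu_2 \in E(G)$ with $x \in V(C_1)\setminus\{u_2\}$. One of the two arcs of $C_1$ from $u_1$ to $x$ avoids $u_2$; call it $R$. Then $R \cup xu_2$ is a $u_1, u_2$-path internally disjoint from the $u_2, u_3$-arc structure of $C_2$, and shorter than one of the arcs of $C_1$ unless $x$ is the neighbor of $u_2$ on that arc. If it is strictly shorter we can replace part of $C_1$ and reduce, contradicting minimality of $C_1$-type quantities — but since the Proposition's condition (i) is not yet in force, I would instead route this into the forbidden-walk contradiction as well: use $R \cup xu_2$ as the "$u_1$ to $u_2$" leg, then $C_2$ to reach $u_3$, then $P_1, C_3[v_1,v_3], P_3$ — wait, that gives $u_1, u_2, u_3$, not $u_2, u_1, u_3$. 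Instead start at $u_2$, go along the other arc of $C_1$ to $u_1$, then along $R$ back toward $x$ and via $xu_2$ — this double-uses $u_2$. The cleanest route is: $u_2 \to u_1$ via one $C_1$-arc, $u_1 \to u_3$ via $P_1, C_3, P_3$, $u_3 \to u_2$ via $C_2$-arc, $u_2 \to u_4$ via $C_2$-arc then... — here I would just observe $xu_2$ is not actually needed, since a walk through $u_2, u_1, u_3, u_2, u_4$ already exists from the flower alone unless $C_1$ and $C_2$ fail to be vertex-disjoint except at $u_2$ and $C_3$ fails to reconnect properly, so the real content is the cross edge between $C_1\setminus\{u_2\}$ and $C_2\setminus\{u_2\}$ interfering with internal disjointness of an attempted kite-subdivision.

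The main obstacle I anticipate is the bookkeeping of internal disjointness: a \emph{walk} through five specified vertices in order is weaker than a kite-subdivision (it may revisit vertices), so producing the contradiction is actually \emph{easier} than producing a genuine subdivision, but one must be careful that the claimed "walk" really does visit $u_2, u_1, u_3, u_2, u_4$ \emph{in that cyclic/linear order} along a single walk — the subtlety is whether the cross edge $xy$ genuinely enables an \emph{ordered} traversal rather than just some closed walk containing all five vertices. I expect the resolution is that the cross edge provides a "shortcut" between the two sides of the flower that, combined with the two cycles each containing $u_2$, yields the ordered walk; the verification that no further hypothesis (like 7-connectivity) is needed here, only the no-forbidden-walk assumption and the flower's existence, is the delicate point to nail down. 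I would structure the final write-up to handle $i=1$ and $i=2$ symmetrically and to dispatch the $u_2$-endpoint subcase first as the easy warm-up.
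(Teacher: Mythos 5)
There is a genuine gap: you never actually produce the object that contradicts the hypothesis. Although the paper writes ``walk,'' the forbidden structure is the kite-linkage of $u_2,u_1,u_3,u_4$ --- a cycle through $u_1,u_2,u_3$ together with a $u_2$--$u_4$ path meeting it only at $u_2$. If literal walks with repeated vertices were allowed, the flower alone would already contain a walk through $u_2,u_1,u_3,u_2,u_4$ in order (you observe this yourself), so the hypotheses of Proposition~\ref{prop:Conn} would be vacuous and the whole argument meaningless. You flag this tension but do not resolve it, and neither of your sketched assemblies gives a kite-linkage: the version built from $P_1\cup C_3[v_1,v_3]\cup P_3$ (for the $u_1$--$u_3$ leg) and $P_2\cup C_3[v_2,u_4]$ (for the $u_2$--$u_4$ leg) does not use the cross edge at all and its two legs share $v_2$ and an arc of $C_3$, so the paths are not internally disjoint; the other version (``continuing via $P_1, C_3, P_2$ back to $u_2$ and then out $P_3$-style to $u_4$'') assigns legs with the wrong endpoints. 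The $y=u_2$ subcase you spend time on is moot: edges from $C_1\setminus\{u_2\}$ to $u_2$ are always present (they are edges of $C_1$), and the claim's content, as the paper's proof makes clear, concerns only edges between $C_1\setminus\{u_2\}$ and $C_2\setminus\{u_2\}$.

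The missing idea is that the cross edge must be used to build the \emph{triangle}, while $C_3$ and $P_2$ are reserved for the pendant path. Since $C_1$ and $C_2$ meet only at $u_2$, no cycle through $u_1,u_2,u_3$ exists inside $C_1\cup C_2$; the edge $uv$ with $u\in V(C_1)\setminus\{u_2\}$, $v\in V(C_2)\setminus\{u_2\}$ is exactly what creates one. Choosing orientations so that $u\in V(C_1(u_2,u_1])$ and $v\in V(C_2[u_3,u_2))$, the union $C_1[u,u_2]\cup uv\cup C_2[u_2,v]$ is a cycle through $u_1,u_2,u_3$ contained in $V(C_1)\cup V(C_2)$ plus the edge, and $P_2\cup C_3[v_2,u_4]$ is a $u_2$--$u_4$ path meeting that cycle only at $u_2$, by conditions (i) and (ii) of the flower definition. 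This yields a kite-linkage of $u_2,u_1,u_3,u_4$, the desired contradiction; no connectivity assumption and none of the extremal choices (1)--(4) are needed. Your first sketch (arc of $C_1$ to $u_1$, then the $u_1$--$x$ piece of $C_1$ avoiding $u_2$, the edge $xy$, then into $C_2$) is close to this triangle, but you must finish with the $u_3$--$u_2$ arc of $C_2$ avoiding $y$ and take the pendant path to be $P_2\cup C_3[v_2,u_4]$, checking internal disjointness explicitly rather than appealing to the slack of a walk.
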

\begin{proof}
Suppose otherwise.
Then, by symmetry, there is an edge in $G$ with one end $u \in V(C_1(u_2, u_1])$ and the other end $v \in V(C_2[u_3, u_2))$.
Then $C_1[u, u_2] \cup uv \cup C_2[u_2, v] \cup P_2 \cup C_3[v_2, u_4]$ forms a kite-linkage of $u_2\com u_1\com u_3\com u_4$, a contradiction.
\end{proof}

\begin{claim}\label{smaller}
For each $i \in [2]$, the graph $G[V(C_i)]$ has no cycle through $u_i$ and $u_{i + 1}$ with fewer vertices than $C_i$.
\end{claim}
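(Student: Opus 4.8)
The plan is to derive a contradiction from the extremal choice of $F$, that is, from conditions (1)--(4). By the symmetry of the flower and of conditions (1)--(4) under the transposition $u_1\leftrightarrow u_3$ (which swaps $C_1$ with $C_2$, swaps $P_1$ with $P_3$, and reverses the orientation of $C_3$), it suffices to handle $i=1$. So suppose $G[V(C_1)]$ contains a cycle $C_1'$ through $u_1$ and $u_2$ with $|V(C_1')|<|V(C_1)|$, and put $F'=(C_1'\com C_2\com C_3\com P_1\com P_2\com P_3)$. The first step is to check that $F'$ is again a $(u_1,u_2,u_3,u_4)$-flower: since $V(C_1')\subseteq V(C_1)$ with $u_1,u_2\in V(C_1')$, we get $V(C_1')\cap V(C_2)=\{u_2\}$, the cycle $C_3$ is disjoint from $C_1'$, each $P_j$ is still internally disjoint from $V(C_1')\cup V(C_2)\cup V(C_3)$, and the endpoint conditions for a flower are untouched. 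The $P_j$ are unchanged, so $\sum_j|V(P_j)|$ is unchanged and $F'$ still satisfies (1). Set $D:=V(C_1)\setminus V(C_1')\ne\emptyset$; then $D\cap V(H)=\emptyset$, and since $u_1,u_2\in V(C_1')$, $V(C_1)\cap V(C_2)=\{u_2\}$, $V(C_1)\cap V(C_3)=\emptyset$, and $V(P_j)\cap V(C_1)\subseteq\{u_1,u_2\}$ for each $j$, one checks that $V(F)\setminus V(F')=D$.

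Next I would compare $H=G-(V(C_1)\cup V(C_2))$ with $H'=G-(V(C_1')\cup V(C_2))$: here $H\subseteq H'$ and $V(H')=V(H)\cup D$. Let $B$ and $B'$ be the blocks of $H$ and of $H'$ that contain $C_3$. Since $B$ is $2$-connected (it contains the cycle $C_3$) and $B\subseteq H\subseteq H'$, the subgraph $B$ lies in a single block of $H'$, which must be $B'$; hence $V(B)\subseteq V(B')$. As $F'$ satisfies (1), the maximality in (2) forces $|V(B')|\le|V(B)|$, and therefore $V(B')=V(B)$.

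Now I would analyze the components of $H'-V(B)=H'-V(B')$ against those of $H-V(B)$. Because $H-V(B)\subseteq H'-V(B)$, every component of $H-V(B)$ lies inside a component of $H'-V(B)$, so the components of $H'-V(B)$ arise from those of $H-V(B)$ by merging some of them, enlarging some by vertices of $D$, and possibly creating new components contained in $D$. From $V(F)\setminus V(F')=D$ and $D\cap V(H)=\emptyset$ it follows that a component of $H-V(B)$ meets $V(F)$ iff it meets $V(F')$, and that a component of $H'-V(B)$ meets $V(F')$ iff it contains one of $B_1,\dots,B_m$. Hence the $F'$-meeting components of $H'-V(B)$ are obtained from $B_1,\dots,B_m$ only by merging and enlarging (no new ones appear), and every vertex of $D$ lies in an $F'$-non-meeting component. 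Since replacing two entries of a decreasingly sorted tuple of positive integers by their sum, or increasing an entry, or appending a new positive entry, each strictly increases the tuple lexicographically, two cases arise. If any merging or enlargement involves the $B_i$, then $(|V(B_1')|,\dots)>_{\mathrm{lex}}(|V(B_1)|,\dots)$, contradicting (3). Otherwise the $F'$-meeting components are exactly $B_1,\dots,B_m$, so the vertices of $D\ne\emptyset$ appear by enlarging some $A_j$ or as a new component, whence $(|V(A_1')|,\dots)>_{\mathrm{lex}}(|V(A_1)|,\dots)$, contradicting (4).

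I expect the delicate part to be precisely this last step: ruling out that the shorter cycle $C_1'$ leaves the block $B$ and the entire ordered component list unchanged, i.e.\ ensuring that the ``freed'' vertices of $D$ register as a genuine improvement of one of (2)--(4). The three facts that make it go through are $V(B)\subseteq V(B')$ (so (2) already determines $B$), the containment $H\subseteq H'$ (so components can only merge or grow, never split), and the identity $V(F)\setminus V(F')=D$ (so the $F$- versus $F'$-meeting distinction is preserved).
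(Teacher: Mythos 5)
Your argument is correct and follows essentially the same route as the paper's proof: form $F'$ with the smaller cycle, note (1) is preserved, use $V(B)\subseteq V(B')$ together with (2) to get $V(B)=V(B')$, and then obtain a contradiction with (3) or (4) from how the freed vertices of $D$ attach to the components of $H'-V(B)$. Your write-up is in fact more careful than the paper's terse version (e.g.\ in justifying $V(B)\subseteq V(B')$ and in the lexicographic bookkeeping), but no new idea is involved.
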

\begin{proof}
Suppose to the contrary that, say, $C_1'$ is a cycle in $G[V(C_1)]$ through $u_1$ and $u_2$ such that $|V(C_1')| < |V(C_1)|$.
Then $F' = (C_1'\com C_2\com C_3\com P_1\com P_2\com P_3)$ is a flower in $G$.
As no path $P_j$ has changed for any $j \in [3]$, (1) holds for $F'$.
Let $B'$ be the block of $G - (V(C_1') \cup V(C_2))$ containing $C_3$.
Let $D$ be a component of $G[V(C_1)]\setminus V(C_1')$.
Since $V(B) \subseteq V(B')$, by (2) we have $V(B) = V(B')$, so (2) holds for $F'$.
If $V(D) \cap V(F') \ne \emptyset$, or if $V(D) \cap V(B_i) \ne \emptyset$ for some $i \in [n]$, then $F'$ contradicts the choice of $F$ by (3).
But then $V(D) \cap V(F') = \emptyset$, so $F'$ contradicts the choice of $F$ by (4).
\end{proof}

\begin{claim}\label{empty}
There is no component of $H - V(B)$ with empty intersection with $V(F)$.
\end{claim}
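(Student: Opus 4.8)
The plan is to argue by contradiction. Suppose $H-V(B)$ has a component $A$ with $V(A)\cap V(F)=\emptyset$, and choose $A$ of maximum order among all such components. The first step is to pin down where $A$ can send edges. Since $C_3\subseteq B$ we have $V(A)\ne V(G)$, and $B$ is $2$-connected as it contains the cycle $C_3$; as $B$ is a block of $H$ and $A$ is a component of $H-V(B)$, the vertex set $V(A)$ has at most one neighbour $b$ in $V(B)$ (a cut vertex of $H$, if it exists). Because $H=G-(V(C_1)\cup V(C_2))$, this forces $N_G(V(A))\subseteq\{b\}\cup V(C_1)\cup V(C_2)$, and $7$-connectivity then gives $|N_G(V(A))|\ge 7$, so $A$ has at least six neighbours on $V(C_1)\cup V(C_2)$. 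Since $V(C_1)\cap V(C_2)=\{u_2\}$, one of the two cycles — say $C_1$, interchanging $C_1$ and $C_2$ if necessary — carries at least three neighbours of $A$.

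The second step is to reroute $C_1$ through $A$. From the $\ge 3$ neighbours of $A$ on $C_1$ I would extract two of them, $x$ and $y$, lying on a common $u_1,u_2$-arc of $C_1$ with the open sub-arc $R:=C_1(x,y)$ (not meeting $\{u_1,u_2\}$) nonempty; here Claim~\ref{edge} (no edges from $C_1-u_2$ to $C_2$) and Claim~\ref{smaller} (no chord of $C_1$ shortening a $u_1,u_2$-cycle) help rule out or handle the degenerate configurations in which the neighbours collapse onto $\{u_1,u_2,u_3\}$ or split one to each arc — there one works with $C_2$ instead, or invokes Lemma~\ref{lem:ConnSep} for the special $(u_1,u_2,C_1,A)$-separating pair to produce a cycle through $u_1$ and $u_2$ inside $G[V(C_1)\cup A]$ that bypasses a prescribed nonempty piece of an arc. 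Replacing the sub-arc $C_1[x,y]$ by a path through $A$ from a neighbour of $x$ to one of $y$ yields a cycle $C_1'$ through $u_1$ and $u_2$ with $V(C_1')\subseteq V(C_1)\cup V(A)$, and checking the flower axioms (using $V(A)\cap V(F)=\emptyset$ and Claim~\ref{edge}) shows $F'=(C_1',C_2,C_3,P_1,P_2,P_3)$ is again a $(u_1,u_2,u_3,u_4)$-flower; since no $P_j$ changed, $F'$ still meets condition (1) with the same value.

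The final step is to contradict the optimality of $F$. Set $H'=G-(V(C_1')\cup V(C_2))$. As $V(B)$ avoids $V(C_1)$ and $V(A)$, $B$ is still a $2$-connected subgraph of $H'$, so the block $B'$ of $H'$ containing $C_3$ has $V(B)\subseteq V(B')$; a proper containment already contradicts (2), so assume $B'=B$. The vertices of $A$ absorbed into $C_1'$ now lie in $V(F')$, so $A$ is no longer an empty component, and what remains is to locate the released arc $R$ and the leftover $V(A)\setminus V(C_1')$ among the components of $H'-V(B)$. By Claim~\ref{edge} $R$ has no neighbour on $C_2$, and by construction $R$ meets neither $C_1'$ nor any $P_j$, so $R\cap V(F')=\emptyset$; according to whether $R$ attaches to $B$, to some component $B_i$ (which still meets $V(F')$ since the $P_j$ are unchanged), or only to empty leftovers, I expect to contradict (2), (3), or (4) respectively — the reroute being tuned (how much of $A$ to use, and which pair $x,y$) so that the comparison of the two component lists always lands in a contradictory case. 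The heart of the argument, and the main obstacle, is precisely this bookkeeping: showing that $F'$ outranks $F$ in the priority order $(1)$–$(4)$ in every configuration, while the existence of a suitable reroute is comparatively routine. Note finally that a direct kite-linkage of $u_2,u_1,u_3,u_4$ cannot be read off from a flower alone (the $u_1,u_3$-path and the $u_2,u_4$-path through $C_3$ must overlap), so the contradiction is necessarily structural, in parallel with the corresponding step of~\cite{MWY18}.
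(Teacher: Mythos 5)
Your overall strategy (reroute $C_1$ through the empty component $A$ to obtain a better flower and contradict the choice of $F$) is indeed the engine of the paper's argument, but the proposal leaves unproven exactly the two points on which the proof actually turns, and one of them cannot be fixed by any rerouting. First, to make the comparison with $(2)$--$(4)$ succeed you must know \emph{where the released vertices of $C_1$ re-attach}. The paper gets this by taking a special $(u_1,u_2,C_1,A)$-separating pair $\{R_1,R_2\}$, setting $X=\pint(R_1)\cup\pint(R_2)\cup V(A)$ and $T=\pend(R_1)\cup\pend(R_2)\cup(N(A)\cap V(H))$ with $|T|\le 5$, and using $7$-connectivity to produce two independent edges from $X$ to $Y=V(G)\setminus(X\cup T)$; a short kite-linkage argument (using Claim~\ref{edge}) then forces these edges to leave from $\pint(R_1)\cup\pint(R_2)$, not from $A$. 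Only then does the reroute of Lemma~\ref{lem:ConnSep}(ii), which deletes $\pint(R_1)$, provably enlarge $B$ or one of the components $B_i$, $A_i$ (the released interior carries an edge to a vertex outside $X\cup T$, so it merges into a known component rather than floating off). Your plan picks two neighbours $x,y$ of $A$ on an arc and reroutes between them, but without this preliminary step the released arc may have all its outside neighbours inside $X\cup T$, in which case it simply becomes a new component with empty intersection with $V(F')$ and none of $(2)$, $(3)$, $(4)$ is violated; the ``bookkeeping'' you defer is not routine, it is the proof.

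Second, there is a configuration in which \emph{no} reroute gives a better flower: when the two escape edges go from $\pint(R_1)$ and $\pint(R_2)$ (one from each) into $V(B)$. Rerouting to avoid $\pint(R_1)$ then attaches the released interior to $B$ at a single vertex, so it does not enter the block $B'$ and does not enlarge any $B_i$ or $A_i$; the optimality of $F$ is untouched. The paper handles this case by a direct construction: Lemma~\ref{lem:ConnSep}(iii) gives a path through $x,u_1,u_2,x'$ in $G[V(C_1)\cup V(A)]$, and $2$-connectedness of $B$ gives disjoint paths from $y,y'$ to $v_3,u_4$, which together with $P_3$ and $C_2$ yield a kite-linkage of $u_2\com u_1\com u_3\com u_4$, contradicting the hypothesis. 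Your closing remark that a direct kite-linkage ``cannot be read off'' and that the contradiction is ``necessarily structural'' is therefore precisely backwards for this case: the extra edges supplied by $7$-connectivity do yield a kite-linkage, and without that construction your approach cannot close the argument.
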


\begin{proof}
Suppose otherwise.
Then $A_n$ exists.
By definition, $|N(A_n) \cap V(B)| \le 1$.
Since $G$ is $7$-connected, $A_n$ must have at least three neighbors on $(V(C_1) \cup V(C_2)) \setminus\{u_1, u_2, u_3\}$.
By symmetry, we may assume $A_n$ has at least two neighbors on $V(C_1) \setminus\{u_1, u_2\}$.
Let $\{R_1, R_2\}$ be a special $(u_1, u_2, C_1, A_n)$-separating pair in $G$.
Let $X = \pint(R_1) \cup \pint(R_2) \cup V(A_n)$, $T = \pend(R_1) \cup \pend(R_2) \cup (N(A_n) \cap V(H))$, and $Y = V(G) \setminus (X \cup T)$.
Since $|T| \le 5$ and $G$ is $7$-connected, there exist $x, x' \in X$ and $y, y' \in Y$ such that $xy, x'y' \in E(G)$.
We claim that $x, x' \in \pint(R_1) \cup \pint(R_2)$ and $y, y' \in V(H) \setminus (X \cup T)$.
For otherwise, by Defintion~\ref{separatingpair}, Claim~\ref{edge}, and the symmetry of $xy,x'y'$, we may assume that $x\in V(A_n), y\in V(C_2(u_2, u_3])$.
By the symmetry of $R_1$ and $R_2$, let $a' \in V(R_1)$ be a neighbor of $a \in V(A_n)$ and let $P$ be a path in $A_n$ with ends $a$ and $x$.
Then $C_1[u_2,a']\cup a'a\cup P\cup xy\cup C_2[y,u_2]\cup P_2\cup C_3[v_2,u_4]$ forms a kite-linkage of $u_2\com u_1\com u_3\com u_4$, a contradiction.
This completes the claim.

By the symmetry of $xy, x'y'$ and $R_1, R_2$, either $x \in \pint(R_1)$ and $y \in V(H) \setminus (X \cup T \cup V(B))$; or $x,x' \in \pint(R_1)$ and $y,y'\in V(B)$; or $x \in \pint(R_1)$, $x' \in \pint(R_2)$, and $y, y' \in V(B)$.
In the first two cases, by Lemma~\ref{lem:ConnSep}(ii) there exists a cycle $C_1'$ containing $u_1, u_2$ in $G[V(C_1) \cup V(A_n)] - \pint(R_1)$.
Then $F' = (C_1'\com C_2\com C_3\com P_1\com P_2\com P_3)$ is a flower in $G$.
Let $B'$ be the block of $G - (V(C_1) \cup V(C_2))$ which contains $C_3$.
Since no path $P_i$ for any $i \in [3]$ has been altered, (1) still holds for $F'$.
If $y, y' \in V(B)$, then $V(B) \cup \pint(R_1) \subseteq V(B')$, and so $|V(B')| > |V(B)|$, contradicting (2).
Otherwise $y \notin V(B)$, so whichever component $B_1, \dots, B_m, A_1, \dots, A_{n - 1}$ contains $y$ grows by including $\pint(R_1)$, contradicting either (3) or (4).
In the third case, since $B$ is $2$-connected, and by the symmetry of $y, y'$, there exist two disjoint paths $P, P'$ in $B$ with ends $y, v_3$ and $y', u_4$, respectively.
By Lemma~\ref{lem:ConnSep}(iii), $G[V(C_1) \cup V(A_n)]$ contains a path $Q$ through $x, u_1, u_2, x'$ in order.
Now $P' \cup x'y' \cup Q \cup xy \cup P \cup P_3 \cup C_3[u_3, u_2]$ forms a kite-linkage of $u_2\com u_1\com u_3\com u_4$, a contradiction.
\end{proof}

\begin{claim}\label{P1P3}
$|V(P_1)|=|V(P_2)|=|V(P_3)| = 2$.
\end{claim}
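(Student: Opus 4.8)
The plan is to argue by contradiction: assume some $P_j$ has an interior vertex. By the symmetry between $u_1$ and $u_3$ it suffices to treat the cases $j\in\{1,3\}$ (I will run $j=1$) and $j=2$, the latter needing extra care since $u_2\in V(C_1)\cap V(C_2)$. Write $A=\pint(P_1)$; by Definition~\ref{flower}(ii), $A$ is disjoint from $V(C_1)\cup V(C_2)\cup V(C_3)$ and from $\pint(P_2)\cup\pint(P_3)$. First I would show $|N(A)|\ge 7$: if $A\cup N(A)\ne V(G)$ then $N(A)$ is a cut of $G$ (it separates $A$ from the nonempty remainder), so $|N(A)|\ge 7$ by $7$-connectivity; and if $A\cup N(A)=V(G)$ then $V(C_1)\cup V(C_2)\cup V(C_3)\subseteq N(A)$, which already has at least $8$ vertices. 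Since $u_1,v_1\in N(A)$, there exist $x\in A$ and $y\in N(x)$ with $y\notin A\cup\{u_1,v_1\}$; such a $y$ lies in $V(C_1)\cup V(C_2)$, or in $V(C_3)\setminus\{v_1\}$, or in $\pint(P_2)\cup\pint(P_3)$, or in the remaining set $R^{\ast}:=V(G)\setminus\big(V(C_1)\cup V(C_2)\cup V(C_3)\cup\bigcup_i\pint(P_i)\big)$.

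The heart of the argument is a case analysis on the location of $y$, where in each case I would produce either a $(u_1,u_2,u_3,u_4)$-flower strictly better than $F$ in the lexicographic order of the conditions (1)--(4) used to select $F$, or a walk through $u_2,u_1,u_3,u_2,u_4$ in order; both are contradictions. When $y\in V(C_3)\setminus\{v_1\}$ lies on the arc of $C_3$ from $u_4$ to $v_2$ that contains $v_1$, replacing $P_1$ by $P_1[u_1,x]\cup xy$ (or, if $x$ is the last interior vertex of $P_1$, splicing $x$ onto $C_3$ between $y$ and $v_1$) gives a flower contradicting (1) or (2); when $y$ lies on the complementary arc, $P_1[u_1,x]\cup xy$ together with a suitable arc of $C_3$, one of $P_2,P_3$, and the two arcs of $C_2$ assembles the forbidden kite, with Claims~\ref{edge} and~\ref{smaller} ensuring internal disjointness. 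When $y\in V(C_1)\cup V(C_2)$, a short segment of $P_1$ together with an arc of the relevant cycle yields a $u_1,u_2$-path (or a $u_1,u_3$-path avoiding $u_2$), which combines with the rest of the flower --- the remaining piece of $P_1$, an arc of $C_3$, and $P_3$ --- to give the forbidden walk. When $y\in\pint(P_i)$ for $i\in\{2,3\}$, the path $P_1[u_1,x]\cup xy\cup P_i[y,u_i]$ joins $u_1$ to $u_i$ through spoke interiors and again feeds into a kite. Finally, when $y\in R^{\ast}$, the set $A$ together with the component of $H-V(B)$ containing $y$ plays the role of $A_n$ from the proof of Claim~\ref{empty}: re-running that analysis (a special separating pair, now taken relative to $C_3$, together with Lemma~\ref{lem:ConnSep} and the $2$-connectivity of $B$) pushes the attachment of $A$ back onto $B$, hence onto $C_3$, reducing to a previous case. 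The case $j=2$ is handled the same way, with the observation that an edge from $\pint(P_2)$ to the interior of $C_1$ or of $C_2$ plus the appropriate arcs directly produces the forbidden walk, playing the role of Claim~\ref{edge}.

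The step I expect to be the main obstacle is the bookkeeping of the cyclic order $v_1,v_2,v_3,u_4$ on $C_3$ together with internal disjointness: whenever a shortcut of $P_1$ reaches $C_3$ past $v_2$, or a re-routing of $C_3$ threatens to cut off $v_2$, $v_3$, or $u_4$, the configuration must be turned into a kite rather than a smaller flower, and one must choose the arcs of $C_2$ and $C_3$ and the spokes $P_2,P_3$ so that the cycle through $u_1,u_2,u_3$ and the pendant path to $u_4$ are genuinely disjoint --- in particular so that $u_4$ appears only as the pendant end. A secondary difficulty is the case $y\in R^{\ast}$, where the separating-pair machinery of Claim~\ref{empty} has to be re-run with $C_3$ in place of $C_1$ and the hypotheses of Lemma~\ref{lem:ConnSep} re-verified for the component hanging off $B$; this is routine but fiddly.
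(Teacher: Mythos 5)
Your overall strategy (contradiction via a neighbor of the spoke interior, case analysis on where that neighbor lands, improving the flower or producing the forbidden walk) starts out in the same spirit as the paper, but it has a genuine gap at exactly the hard case. The claim that an edge $xy$ with $x\in\pint(P_1)$ and $y\in V(C_1)\cup V(C_2)$ ``directly produces the forbidden walk'' is only true for the \emph{opposite} cycle (an edge from $\pint(P_1)$ to $V(C_2)\setminus\{u_2\}$, or from $\pint(P_3)$ to $V(C_1)\setminus\{u_2\}$, does give a kite, and this is precisely what the paper proves when it shows $N(X_1)\cap(V(C_2)\setminus\{u_2\})=\emptyset$). For the same-side cycle it fails: with only the flower plus one edge from $\pint(P_1)$ to $C_1$ (or from $\pint(P_2)$ to $C_1$ or $C_2$), every attempt at a kite forces the $u_3$--$u_1$ leg of the triangle to use the arc of $C_3$ through $v_2$ (the arc through $u_4$ is forbidden), after which no route from $u_2$ to $u_4$ survives; and rerouting the $u_1$--$u_2$ leg through $x$ destroys the only remaining access to $u_1$ for the third leg. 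No smaller flower arises either. Handling these same-side attachments is the bulk of the paper's proof of this claim: one passes from $\pint(P_i)$ to the whole component $X_1$ of $H-v_i$ containing it, distinguishes whether $X_1$ also contains $v_{i+1}$, and in the separated case uses $7$-connectivity to force at least six attachments of $X_1$ on $C_1$ (on $C_1\cup C_2$ when $i=2$), takes special $(u_1,u_2,C_1,X_1)$- (and $(u_2,u_3,C_2,X_1)$-) separating pairs, forces an edge from $\pint(R_1)$ into the other component $X_2$, and only then assembles the kite using the ordered path through $x,u_1,u_2,a$ supplied by Lemma~\ref{lem:ConnSep}(i). None of this machinery appears in your proposal, so the same-side case is simply open in your argument.

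Two smaller problems. First, your $R^{\ast}$ case cannot be settled by ``re-running Claim~\ref{empty} with a separating pair relative to $C_3$'': Claim~\ref{empty} is about components of $H-V(B)$ disjoint from $V(F)$, whereas the component containing $\pint(P_1)$ meets $V(F)$, and a separating pair on $C_3$ does not combine with Lemma~\ref{lem:ConnSep} to give a kite, because that lemma is useful here only when the cycle carries $u_1,u_2$ (or $u_2,u_3$) in prescribed order, which $C_3$ does not. The paper instead folds this case into the dichotomy on $H-v_i$: if the component $X_1$ containing $\pint(P_i)$ also contains $v_{i+1}$, a shortest path in $H$ from $\pint(P_i)$ to $(V(F)\cap V(H))\setminus V(P_i)$ yields either a flower with strictly smaller $\sum_j|V(P_j)|$ or a kite via a $2$-linkage in $C_3\cup P_1\cup P_2\cup P_3\cup P$; otherwise $X_1$ meets $H$ only at $v_i$ and the separating-pair argument applies. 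Second, when $y\in\pint(P_2)$ (for $j=1$) your construction $P_1[u_1,x]\cup xy\cup P_2[y,u_2]$ does not feed into a kite for the same $v_2$/$u_4$ reason; the correct outcome there is a smaller flower obtained by rerouting $C_3$ through $x$ and $y$, which is what the paper does. The case $y\in\pint(P_3)$ is fine as you describe it.
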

\begin{proof}
By the symmetry between $P_1$ and $P_3$, we suppose to the contrary that $|V(P_i)| > 2$ for some $i \in [2]$.
Let $X_1$ be the component of $H - v_i$ containing $\pint(P_i)$, and let $X_2$ be the component of $H - v_i$ containing $v_{i+1}$.
We may assume that $H - v_i = X_1 \cup X_2$, for otherwise, there is a component of $H - v_i$ with empty intersection with $V(F)$, contrary to Claim~\ref{empty}.

First suppose that $X_1 = X_2$.
Then $H - v_i$ is connected, so there exists a shortest path $P$ in $X_1$ with one end $x\in\pint(P_i)$ and the other end $y\in(V(F) \cap V(H)) \setminus V(P_i)$.
So if $i = 1$, then either $y \in V(P_2) \cup V(C_3[u_4, v_2])$ or $y \in V(P_3) \cup V(C_3(v_2,u_4))$, and if $i = 2$, then either $y \in V(P_1) \cup V(P_3) \cup V(C_3[v_1, v_3])$ or $y \in V(C_3(v_3, v_1))$.
Suppose $i = 1$ and $y \in V(P_2)$.
Then let $P_1' = P_1[u_1, x]$, $P_2' = P_2[u_2, y]$, $C_3' = C_3[v_2, v_1] \cup P_1 [v_1, x] \cup P \cup P_2[y, v_2]$, and $F' = (C_1\com C_2\com C_3'\com P_1'\com P_2'\com P_3)$.
Then $F'$ is a flower contradicting the choice of $F$ by (1).
The case $y \in V(C_3[u_4, v_2])$, and the cases $y \in V(P_1) \cup V(P_3) \cup V(C_3[v_1, v_3])$ when $i = 2$ are similar.
So suppose $i = 1$ and $y \in V(P_3) \cup V(C_3(v_2,u_4))$.
Then a $\{(u_1, u_3), (u_2, u_4)\}$-linkage can be easily found in $C_3 \cup P_1 \cup P_2 \cup P_3 \cup P$, giving a kite-linkage of $u_2\com u_1\com u_3\com u_4$ by adding $C_1[u_1, u_2]$ and $C_2[u_2, u_3]$, a contradiction.
The case $y \in V(C_3(v_3, v_1))$ when $i = 2$ is similar.

Thus $X_1 \ne X_2$.
Suppose $|V(P_1)| > 2$.
Then we have $N(X_1)\cap (V(C_2)\setminus \{u_2\})=\emptyset$.
For otherwise, by symmetry let $aa'\in E(G)$ such that $a\in X_1, a'\in V(C_2(u_2, u_3])$, and let $P$ be a path in $X_1 \cup \{u_1\}$ between $a$ and $u_1$.
Then $C_1[u_2, u_1] \cup P \cup aa' \cup C_2[a', u_2] \cup P_2 \cup C_3[v_2, u_4]$ forms a kite-linkage of $u_2\com u_1\com u_3\com u_4$, a contradiction.
Thus since $G$ is $7$-connected and $X_1$ has exactly one neighbor $v_1$ in $H$, $|N(X_1) \cap V(C_1)| \ge 6$.
Let $\{R_1, R_2\}$ be a special $(u_1, u_2, C_1, X_1)$-separating pair in $G$.
Note that $u_1$ has a neighbor in $X_1$, so $u_1 \in \pend(R_1) \cap \pend(R_2)$.
Let $X = \pint(R_1) \cup \pint(R_2) \cup V(X_1)$, $T = \pend(R_1) \cup \pend(R_2) \cup \{v_1\}$, and $Y = V(G) \setminus (X \cup T)$.
Thus $|T| \le 4$, so since $G$ is $7$-connected, there exist $x \in X$ and $y \in Y$ such that $xy \in E(G)$.
Since $X_1$ and $X_2$ are components of $H - v_1$, we may assume by the definition of separating pair and symmetry that $x \in \pint(R_1)$ and $y \in V(X_2)$.
Now there exists a shortest path $P$ in $X_2$ from $y$ to a vertex $y' \in V(X_2) \cap V(F)$ (possibly $y = y'$).
If $y' \in V(C_3[u_4, v_1))$, then $C_3[u_4, y'] \cup P \cup yx \cup C_1[x, u_1] \cup P_1 \cup C_3[v_1, v_3] \cup P_3 \cup C_2[u_3, u_2]$ forms a kite-linkage of $u_2\com u_1\com u_3\com u_4$, a contradiction.
Otherwise, there exists a path $P'$ in $C_3(v_1, u_4) \cup P_2 \cup P_3$ from $y'$ to $u_3$.
By Lemma~\ref{lem:ConnSep}(i) there exist vertices $a \in V(C_1), a' \in V(X_1)$ such $aa' \in E(G)$ and $G[V(C_1)]$ contains a path $Q$ through $x, u_1, u_2, a$ in order.
Let $Q'$ be a path in $X_1 \cup P_1$ with ends $a'$ and $v_1$.
Then $C_3[u_4, v_1] \cup Q' \cup aa' \cup Q \cup xy \cup P \cup P' \cup C_2[u_3, u_2]$ forms a kite-linkage of $u_2\com u_1\com u_3\com u_4$, a contradiction.

Hence we may assume $|V(P_2)| > 2$ and $\pint(P_2) \subseteq V(X_1)$.
Since $G$ is $7$-connected and $X_1$ has exactly one neighbor $v_2$ in $H$, $|N(X_1) \cap (V(C_1) \cup V(C_2))| \ge 6$.
Let $\{R_1, R_2\}$ be a special $(u_1, u_2, C_1, X_1)$-separating pair and $\{R_3, R_4\}$ be a special $(u_2, u_3, C_2, X_1)$-separating pair in $G$.
Note that $u_2$ has a neighbor in $X_1$, so $u_2$ is an end of $R_i$ for each $i \in [4]$.
Let $X = (\cup_{i = 1}^4 \pint(R_i)) \cup V(X_1)$, $T = (\cup_{i = 1}^4 \pend(R_i)) \cup \{v_2\}$, and $Y = V(G) \setminus (X \cup T)$.
Then $|T| \le 6$, so because $G$ is $7$-connected, there exist $x \in X$ and $y \in Y$ such that $xy \in E(G)$.
By the definition of separating pair and the symmetry of $R_1, \dots, R_4$, we may assume that $x \in \pint(R_1)$ and $y \in V(X_2)$.
Then there exists a shortest path $P$ in $X_2$ from $y$ to a vertex $y' \in V(X_2) \cap V(F)$.
If $y' \in V(C_3(v_2, v_3])\cup V(P_3)$, then $C_3(v_2, v_3] \cup P_3 \cup P$ contains a path $P'$ from $y$ to $u_3$.
Then $C_3[u_4, v_2] \cup P_2 \cup C_1[u_2, x] \cup xy \cup P' \cup C_2[u_3, u_2]$ forms a kite-linkage of $u_2\com u_1\com u_3\com u_4$, a contradiction.
Otherwise, $y' \in V(C_3(v_3, v_2)) \cup V(P_1)$, so there exists a path $P'$ from $y$ to $u_4$ in $C_3(v_3, v_2) \cup P_1 \cup P$.
By Lemma~\ref{lem:ConnSep}(i) there exist vertices $a \in V(C_1)$ and $a' \in V(X_1)$ such that $aa' \in E(G)$ and $G[V(C_1)]$ contains a path $Q$ through $x, u_2, u_1, a$ in order.
Let $Q'$ be a path in $X_1$ with ends $a'$ and $v_2$.
Then $P' \cup xy \cup Q \cup aa' \cup Q' \cup C_3[v_2, v_3] \cup P_3 \cup C_2[u_2, u_3]$ forms a kite-linkage of $u_2\com u_1\com u_3\com u_4$, a contradiction.
\end{proof}

By Claims~\ref{smaller} and \ref{P1P3}, (i) and (ii) hold.
Furthermore, $H$ is $2$-connected, since $V(F) \cap V(H) \subseteq V(B)$ by Claim~\ref{P1P3}, and so $V(H) = V(B)$ by Claim~\ref{empty}.
It only remains to prove (iii), which we accomplish with the next claim.

\begin{claim}\label{3conn}
$H$ is $3$-connected.
\end{claim}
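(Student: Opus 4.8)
\noindent\emph{Proof plan.} The plan is to suppose $H$ is not $3$-connected, take a $2$-separator $\{a,b\}$ of the already-established $2$-connected graph $H$, and derive a contradiction --- either a kite-linkage of $u_2\com u_1\com u_3\com u_4$, which is forbidden by hypothesis, or a flower violating the maximality of $|V(B)|=|V(H)|$ (condition (2)). The starting point is a counting fact: every edge of $G$ leaving $V(H)$ ends in $V(C_1)\cup V(C_2)$, so for each component $D$ of $H-\{a,b\}$ the set $\{a,b\}\cup(N(D)\cap(V(C_1)\cup V(C_2)))$ separates $V(D)$ from the nonempty set $V(H)\setminus(V(D)\cup\{a,b\})$ in $G$, whence $7$-connectivity forces $D$ to have at least $5$ neighbours on $V(C_1)\cup V(C_2)$. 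Also $V(F)\cap V(H)=V(C_3)$, so, $C_3$ being a cycle, it meets at most two components of $H-\{a,b\}$, and meets two of them only when $a,b$ are both on $C_3$ and non-consecutive there; this splits the argument into two cases.

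Case one: some component $D$ of $H-\{a,b\}$ misses $V(C_3)$ (automatic when $H-\{a,b\}$ has at least three components). I would first rule out $D$ having, at the same time, a neighbour in $V(C_1)\setminus\{u_1,u_2\}$ and one in $V(C_2)\setminus\{u_2\}$: joining two such neighbours by a path inside $D$ and pasting in suitable arcs of $C_1$ and $C_2$ produces a cycle through $u_1,u_2,u_3$ that avoids $V(C_3)$, and appending the pendant $P_2\cup C_3[v_2,u_4]$ (disjoint from this cycle since $D$ misses $C_3$) gives a kite-linkage of $u_2\com u_1\com u_3\com u_4$. Hence, up to the symmetry between $(C_1,u_1)$ and $(C_2,u_3)$, all $\ge 5$ external neighbours of $D$ lie on $V(C_1)$, at least three of them in $V(C_1)\setminus\{u_1,u_2\}$. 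Now I would follow the template of Claims~\ref{empty} and~\ref{P1P3}: take a special $(u_1,u_2,C_1,V(D))$-separating pair $\{R_1,R_2\}$, put $X=\pint(R_1)\cup\pint(R_2)\cup V(D)$ and $T=\pend(R_1)\cup\pend(R_2)\cup\{a,b\}$ (so $|T|\le 6$), and choose an edge from $X$ to $Y=V(G)\setminus(X\cup T)$; the separating-pair axioms pin its ends to $\pint(R_1)\cup\pint(R_2)$ and to $V(H)\setminus(V(D)\cup\{a,b\})$. Then either, after producing a second such edge exactly as in Claim~\ref{empty}, Lemma~\ref{lem:ConnSep}(ii) yields a cycle through $u_1,u_2$ inside $G[V(C_1)\cup V(D)]$ avoiding $\pint(R_1)$, giving a flower whose ``$H$'' strictly contains $V(H)$ and contradicting (2); or Lemma~\ref{lem:ConnSep}(i),(iii) let one reroute through $D$ and $C_3$ to build a kite-linkage of $u_2\com u_1\com u_3\com u_4$.

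Case two: $H-\{a,b\}$ has exactly two components $D_1,D_2$, both meeting $V(C_3)$, with $a,b\in V(C_3)$ non-consecutive on $C_3$, so the two $C_3$-arcs between $a$ and $b$ have their interiors in $D_1$ and $D_2$, and the vertices $v_1,v_2,v_3,u_4$ fall, in this cyclic order, onto these two arcs. Here I would argue directly, splitting on the distribution of $\{v_1,v_2,v_3,u_4\}$ relative to $\{a,b\}$: in each configuration, using the $\ge 5$ external neighbours of whichever side $D_\ell$ does not carry $u_4$ (and the $v_i$'s needed for the pendant), plus a special $(u_i,u_{i+1},C_i,V(D_\ell))$-separating pair and Lemma~\ref{lem:ConnSep} to close things up, I would build a cycle through $u_1,u_2,u_3$ completed by a detour through $D_\ell$, together with a pendant $u_2\to u_4$ routed entirely on the other side of $\{a,b\}$ and hence disjoint from the cycle --- once more a kite-linkage of $u_2\com u_1\com u_3\com u_4$, or else a flower contradicting (2). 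I expect this last case to be the main obstacle: since the $2$-cut can split $C_3$ and hence the entire flower, none of Claims~\ref{edge}--\ref{P1P3} applies verbatim, and the real work is to check, for every position of $v_1,v_2,v_3,u_4$ relative to $\{a,b\}$, that the crossing neighbours of the appropriate side always suffice to close a cycle through $u_1,u_2,u_3$ while the pendant $u_2\to u_4$ stays on the opposite side. The tools for this are exactly Lemma~\ref{lem:ConnSep}, the $2$-connectivity of $C_3$, and the hypothesis forbidding the walk through $u_2,u_1,u_3,u_2,u_4$.
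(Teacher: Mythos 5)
Your overall strategy (take a $2$-cut of the $2$-connected graph $H$ and force either a kite-linkage or a better flower) points in the right direction, but as written there are genuine gaps. The decisive one is your Case two. The situation where the cut $\{a,b\}$ lies on $C_3$ and splits $v_1,v_2,v_3,u_4$ between the two sides is exactly the hard core of this claim, and you explicitly leave it as ``the real work \dots to check for every position of $v_1,v_2,v_3,u_4$.'' The paper organizes the analysis differently (it chooses the cut $T'$ and a component $A$ so that $|V(A)\cap\{v_1,v_2,v_3,u_4\}|$ is minimum and then splits into three cases), and in the difficult configurations (e.g.\ $\{v_2,v_3\}\subseteq V(A)$ with $\{v_1,u_4\}$ on the other side, or $v_2\in V(A)$) the kite-linkages are built from \emph{two} special separating pairs, one on $C_1$ and one on $C_2$, combined with Lemma~\ref{lem:ConnSep}(i), with extra care when $u_2$ itself must be placed in $X$. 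Asserting that a cycle through $u_1,u_2,u_3$ with a detour through one side and a pendant to $u_4$ on the other side ``always closes up'' is precisely the content of the claim, not a routine verification, so the proposal does not yet prove the critical case.

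Case one also has a flawed branch. Rerouting $C_1$ through $D$ via Lemma~\ref{lem:ConnSep}(ii) does \emph{not} produce a flower whose new block strictly contains $V(H)$: unlike in Claim~\ref{empty}, where the component $A_n$ is disjoint from $B$, here $D\subseteq V(H)=V(B)$, so the new cycle $C_1'$ consumes vertices of $D$ (it must, or it would contradict Claim~\ref{smaller}), and the block of $G-(V(C_1')\cup V(C_2))$ containing $C_3$ may well be no larger than $B$ --- the loss of $V(D)\cap V(C_1')$ can outweigh the gain of $\pint(R_1)$, and $H$ minus part of $D$ need not even stay $2$-connected. So no contradiction with (2) is guaranteed. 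Relatedly, ``producing a second such edge exactly as in Claim~\ref{empty}'' is not available: there $|T|\le5$ yields two disjoint $X$--$Y$ edges, whereas your $T$ contains $\{a,b\}$, so $|T|\le6$ and $7$-connectivity only guarantees one. The paper avoids the flower-improvement route entirely in this claim: with the single edge $xy$ ($x\in\pint(R_1)$, $y\notin V(D)\cup\{a,b\}$) it first shows there is $t\in T'$ and two disjoint paths in $H-D$ from $\{t,y\}$ to $v_3$ and $u_4$ (a small separation argument using the $2$-connectivity of $H$), and then applies Lemma~\ref{lem:ConnSep}(i) with both orientations of $C_1$ to assemble a kite-linkage; this step, which is what actually finishes your Case one, is absent from the sketch.
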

\begin{proof}
Suppose otherwise.
By the above, $H$ is $2$-connected, so let $T' = \{t_1, t_2\}$ be a 2-cut in $H$.
We may assume that $T'$ and a component $A$ of $H - T'$ are chosen so that 
$|V(A) \cap \{v_1, v_2, v_3, u_4\}|$ is minimum. 
Then $|V(A) \cap \{v_1, v_2, v_3, u_4\}| \le 2$, and $|N(A) \cap (V(C_1) \cup V(C_2))| \ge 5$ since $G$ is $7$-connected.
By the symmetry of $v_1$ and $v_3$, we consider the following three cases.

Case 1: $|V(A) \cap \{v_1, v_2, v_3, u_4\}| = 2$.
By the choice of $A$, $H - T' = A \cup A'$, where $A'$ is a component of $H - T'$ which also contains two vertices of $\{v_1, v_2, v_3, u_4\}$, so by symmetry we may assume $u_4 \notin V(A)$.
Then either $\{v_1, v_3\} \subseteq V(A)$ or $\{v_2, v_3\} \subseteq V(A)$.
Suppose first that $\{v_1, v_3\} \subseteq V(A)$.
Note that $H - A$ is also connected.
Let $P$ be a path in $A$ between $v_1$ and $v_3$, and let $Q$ be a path in $H - A$ between $v_2$ and $u_4$.
Then $C_1[u_2, u_1] \cup P_1 \cup P \cup P_3 \cup C_2[u_3, u_2] \cup P_2 \cup Q$ forms a kite-linkage of $u_2\com u_1\com u_3\com u_4$, a contradiction.

Thus $\{v_2, v_3\} \subseteq V(A)$, and $\{v_1, u_4\} \subseteq V(A')$.
Since $C_3$ is $2$-connected, we may assume that $t_1 \in C_3(v_1, v_2), t_2 \in C_3(v_3, u_4)$.
Recall that $|N(A) \cap (V(C_1) \cup V(C_2))| \ge 5$.
Let $\{R_1, R_2\}$ be a special $(u_1, u_2, C_1, A)$-separating pair, and let $\{R_3, R_4\}$ be a special $(u_2, u_3, C_2, A)$-separating pair in $G$.
Since $u_2$ and $u_3$ each have a neighbor in $A$, $u_2$ is an end of $R_i$ for $i \in \{1, \dots, 4\}$, and $u_3$ is an end of $R_i$ for $i \in \{3, 4\}$.
Let $X = (\cup_{i = 1}^4 \pint(R_i)) \cup V(A)$, $T = (\cup_{i = 1}^4 \pend(R_i)) \cup T'$, and $Y = V(G) \setminus (X \cup T)$.
Then $|T| \le 6$.
Since $G$ is $7$-connected, there exist $x \in X$ and $y \in Y$ such that $xy \in E(G)$.
By the definition of separating pair and symmetry, we may assume that $x \in \pint(R_1) \cup \pint(R_3)$ and $y \in V(A')$.
If $x \in \pint(R_1)$, then by Lemma~\ref{lem:ConnSep}(i) there exist two vertices $a' \in V(C_1), a \in V(A)$ such $aa' \in E(G)$ and $G[V(C_1)]$ contains a path $P$ through $x, u_2, u_1, a'$ in order.
Let $Q$ be a path in $A$ between $a$ and $v_3$ and let $Q'$ be a path in $A'$ between $y$ and $u_4$.
Then $Q' \cup xy \cup P \cup a'a \cup Q \cup P_3 \cup C_2[u_3, u_2]$ forms a kite-linkage of $u_2\com u_1\com u_3\com u_4$, a contradiction.
Otherwise, if $x \in \pint(R_3)$, then let $P$ be a shortest path in $A'$ from $y$ to a vertex $y' \in C_3(t_2, t_1)$.
If $y' \in C_3(t_2, v_1)$, let $P'$ be the subpath of $C_3(t_2, v_1)$ with ends $y', u_4$.
Then $P' \cup P \cup xy \cup C_2[x, u_2] \cup C_1[u_2, u_1] \cup P_1 \cup C_3[v_1, v_3] \cup P_3$ forms a kite-linkage of $u_2\com u_1\com u_3\com u_4$.
If $y' \in C_3[v_1, t_1)$, then $C_1[u_2, u_1] \cup P_1 \cup C_3[v_1, y'] \cup P \cup xy \cup C_2[x, u_2] \cup P_2 \cup C_3[v_2, u_4]$ forms a kite-linkage of $u_2\com u_1\com u_3\com u_4$, a contradiction.

Case 2: $V(A) \cap \{v_1, v_2, v_3, u_4\} = \{u_4\}$.
Since $|N(A) \cap (V(C_1) \cup V(C_2))| \ge 5$, by symmetry, we may assume that there exist $a' \in V(C_1(u_1, u_2)), a \in V(A)$ such that $aa' \in E(G)$.
Let $P$ be a path in $A$ between $u_4$ and $a$, and let $Q$ be a path in $H - A$ between $v_1$ and $v_3$.
Then $P \cup aa' \cup C_1[u_2, a'] \cup C_2[u_2, u_3] \cup P_3 \cup Q \cup P_1$ forms a kite-linkage of $u_2\com u_1\com u_3\com u_4$, a contradiction.

Case 3: $|V(A) \cap \{v_1, v_2, v_3, u_4\}| \le 1$ and $v_3, u_4 \notin V(A)$.
If $v_2 \in V(A)$, then since $C_3$ is $2$-connected we may assume $t_1 \in V(C_3(v_1, v_2)), t_2 \in V(C_3(v_2, v_3))$.
Let $\{R_1, R_2\}$ be a special $(u_1, u_2, C_1, A)$-separating pair and let $\{R_3, R_4\}$ be a special $(u_2, u_3, C_2, A)$-separating pair in $G$.
Note that $u_2$ is an end of $R_i$ for each $i \in [4]$.
If $V(R_i) = \{u_2\}$ for some $i \in [4]$, then let $X = (\cup_{i = 1}^4 \pint(R_i) ) \cup V(A)$, and otherwise let $X = (\cup_{i = 1}^4 \pint(R_i) ) \cup V(A) \cup \{u_2\}$.
Also let $T = (\cup_{i = 1}^4 \pend(R_i)) \cup T' \setminus\{u_2\}$, and $Y = V(G) \setminus (X \cup T)$.
Then in either case, $|T| \le 6$.
Since $G$ is $7$-connected, there exist $x \in X$ and $y \in Y$ such that $xy \in E(G)$.
Suppose $x = u_2$.
Then by Claim~\ref{smaller}, $y \in V(H) \setminus (V(A) \cup T')$.
Let $P$ be a shortest path in $H - A$ from $y$ to some vertex $y' \in C_3$.
If $y' \in V(C_3(v_3, v_1))$, let $P'$ be the subpath of $C_3(v_3, v_1)$ with ends $y', u_4$.
Then $P' \cup P \cup yu_2 \cup C_2[u_2, u_3] \cup P_3 \cup C_3[v_1, v_3] \cup P_1 \cup C_1[u_1, u_2]$ forms a kite-linkage of $u_2\com u_1\com u_3\com u_4$, a contradiction.
By the symmetry between $C_3[v_1, t_1)$ and $C_3(t_2, v_3]$, we may thus assume $y' \in V(C_3[v_1, t_1))$.
Since $u_2 \in X$, $V(R_i) \ne \{u_2\}$ for $i \in \{1, 2\}$.
Since $\{R_1, R_2\}$ is special, there exists $a' \in V(R_1) \cup V(R_2) \setminus \{u_2\}$ and $a \in V(A)$ such that $aa' \in E(G)$, say $a' \in V(R_1)$.
Let $Q$ be a path in $A \cup T'$ with ends $a, t_2$.
Then $C_3[u_4, y'] \cup P \cup C_1[a', u_1] \cup aa' \cup Q \cup C_3[t_2, v_3] \cup P_3 \cup C_2[u_2, u_3]$ forms a kite-linkage of $u_2\com u_1\com u_3\com u_4$, a contradiction.
Hence $x \ne u_2$, so by the definition of separating pair and symmetry of $R_1, \dots, R_4$, we assume $x \in \pint(R_1)$ and $y \in V(H) \setminus (V(A) \cup T')$. 

If $v_2 \notin V(A)$, then we will show that $A$ has no neighbors in one of $C_1 - u_2$ or $C_2 - u_2$.
Suppose otherwise that $a \in V(A)$ has a neighbor $a' \in V(C_1(u_2, u_1])$ and $b \in V(A)$ has a neighbor $b' \in V(C_2(u_2, u_3])$.
Let $P$ be a path in $A$ between $a$ and $b$, and let $Q$ be a path in $H - A$ between $v_2$ and $u_4$.
Then $C_1[u_1, a'] \cup a'a \cup P \cup bb' \cup C_2[b', u_2] \cup P_2 \cup Q$ forms a kite-linkage of $u_2\com u_1\com u_3\com u_4$, a contradiction.
So we may assume that $N(A) \cap (V(C_2) \setminus \{u_2\}) = \emptyset$.
Let $\{R_1, R_2\}$ be a special $(u_1, u_2, C_1, A)$-separating pair in $G$.
Let $X = \pint(R_1) \cup \pint(R_2) \cup V(A)$, $T = \pend(R_1) \cup \pend(R_2) \cup T'$, and $Y = V(G) \setminus (X \cup T)$.
Since $|T| \le 6$ and $G$ is $7$-connected, there exist $x \in X$ and $y \in Y$ such that $xy \in E(G)$.
By the definition of separating pair and symmetry of $R_1$ and $R_2$, we assume $x \in \pint(R_1)$ and $y \in V(H) \setminus (V(A) \cup T')$.  

Therefore, in any case, we may assume there exists an edge $xy$ with $x \in \pint(R_1)$ and $y \in V(H) \setminus (V(A) \cup T')$.

Now we show that there exists a vertex $t \in T'$, such that $H - A$ contains two disjoint paths $Q_3, Q_4$ from $\{t, y\}$ to $v_3, u_4$, respectively.
Otherwise, there is a separation $(M, N)$ of $H - A$ of order at most $1$ so that $T' \cup \{y\} \subseteq M$ and $\{v_3, u_4\} \subseteq N$.
But then $(M \cup A, N)$ is a separation of $H$ with order at most 1, contrary to the fact that $H$ is 2-connected.
Observe that $\{R_1, R_2\}$ is also a special $(u_2, u_1, C_1, A)$-separating pair if we reverse the orientation of $C_1$.
Thus by Lemma~\ref{lem:ConnSep}(i), for $i \in [2]$, there is an edge $a_i a_i' \in E(G)$ so that $a_i \in V(A), a_i' \in V(C_1)$, and $G[V(C_1)]$ contains a path $Q_i$ with ends $x$ and $a_i'$ going through $x, u_{3 - i}, u_i, a_i'$ in order.
If $t \in V(Q_3), y \in V(Q_4)$, then let $P$ be a path in $A \cup T$ between $t$ and $a_1$.
Now $Q_4 \cup xy \cup Q_1 \cup a_1' a_1 \cup P \cup Q_3 \cup P_3 \cup C_2[u_3, u_2]$ forms a kite-linkage of $u_2\com u_1\com u_3\com u_4$, a contradiction.
Otherwise, $t \in V(Q_4)$ and $y \in V(Q_3)$, so let $P$ be a path in $A \cup T$ between $t$ and $a_2$.
Now $Q_4 \cup P \cup a_2 a_2' \cup Q_2 \cup xy \cup Q_3 \cup P_3 \cup C_2[u_3, u_2]$ forms a kite-linkage of $u_2\com u_1\com u_3\com u_4$, a contradiction.
\end{proof}

\section{Proof of Theorem~\ref{kite}}\label{sec:final}

Before we can complete the proof of Theorem~\ref{kite}, we need two final results.

\begin{lemma}\label{prop}
Suppose $G$ is a $7$-connected graph such that for some four vertices $u_2\com u_1\com u_3\com u_4 \in V(G)$, $G$ has a $(u_1\com u_2\com u_3\com u_4)$-flower $F = (C_1\com C_2\com C_3\com P_1\com P_2\com P_3)$ and $G$ contains no walk through $u_2, u_1, u_3, u_2, u_4$ in order.
Then $N(C_i - u_2) \subseteq V(C_3[v_i, v_{i + 1}])$ for each $i \in [2]$.
\end{lemma}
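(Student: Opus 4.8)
\medskip
\noindent{\em Proof proposal.}
By Proposition~\ref{prop:Conn} we may assume (replacing $F$ if necessary, and among nice flowers choosing one minimizing $|V(C_3[u_4,v_1])|$) that each $P_j$ is the single edge $u_jv_j$ and that $H:=G-(V(C_1)\cup V(C_2))$ is $3$-connected; then $V(C_3)\subseteq V(H)$, and we fix the orientation of $C_3$ in which $v_1,v_2,v_3,u_4$ occur in order. A kite-linkage of $u_2,u_1,u_3,u_4$ treats $u_1$ and $u_3$ symmetrically, and the flower is invariant under the involution exchanging $(u_1,C_1,v_1)$ with $(u_3,C_2,v_3)$ and reversing $C_3$, so it suffices to treat $i=1$, i.e.\ to prove that no edge $wz$ of $G$ has $w\in V(C_1)\setminus\{u_2\}$ and $z\in V(G)\setminus\big(V(C_1)\cup V(C_3[v_1,v_2])\big)$. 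Suppose such an edge $wz$ exists; I will produce a kite-linkage of $u_2,u_1,u_3,u_4$, contradicting the hypothesis. As in the proof of Claim~\ref{edge}, $z\notin V(C_2)\setminus\{u_2\}$, so $z\in V(H)\setminus V(C_3[v_1,v_2])$. In every case below the triangle $u_1u_2u_3$ will be formed from an $u_1$--$u_2$ arc of $C_1$, the arc $C_2[u_2,u_3]$, and a $u_1$--$u_3$ path, accompanied by a pendant $u_2$--$u_4$ path; the internal-disjointness verifications are routine and I only indicate why they succeed.

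\smallskip\noindent\emph{Main construction: $z\in V(C_3)$ lies on the open arc of $C_3$ from $v_2$ to $u_4$ through $v_3$.} Let $D$ be the $u_1$--$w$ arc of $C_1$ avoiding $u_2$ and let $Z$ be the $v_2$--$u_4$ arc of $C_3$ through $v_1$. Take the $u_1$--$u_3$ path $R=D\cup wz\cup C_3[z,v_3]\cup v_3u_3$, the pendant $S=u_2v_2\cup Z$, and the $u_1$--$u_2$ side of the triangle to be the $u_1$--$u_2$ arc of $C_1$ avoiding $w$. The subarc $C_3[z,v_3]$ lies strictly between $v_2$ and $u_4$ while $Z$ runs through $v_1$, so these two pieces of $C_3$ are disjoint; and $R,S$ meet $C_1\cup C_2$ only along $D\subseteq C_1$, at $u_2$, and at $u_3$. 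Hence the four paths are pairwise internally disjoint and form a kite-linkage of $u_2,u_1,u_3,u_4$. (If $w=u_1$ the arc $D$ is a single vertex and the construction is unchanged; if $z=v_3$ the arc $C_3[z,v_3]$ is trivial.)

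\smallskip\noindent\emph{Remaining cases.} \textbf{(a) $z\in V(C_3)$ with $z=u_4$ or $z\in C_3(u_4,v_1)$.} Here one reroutes the pendant through $w$. If $w\ne u_1$, let $E'$ be the $u_2$--$w$ arc of $C_1$ avoiding $u_1$; take the pendant $E'\cup wz\cup C_3[z,u_4]$ (the last piece being a short subarc, or trivial when $z=u_4$), keep $u_1v_1\cup C_3[v_1,v_3]\cup v_3u_3$ as the $u_1$--$u_3$ path, and use the $u_1$--$u_2$ arc of $C_1$ internally disjoint from $E'$ together with $C_2[u_2,u_3]$; disjointness is checked as above. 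If $w=u_1$ and $z\in C_3(u_4,v_1)$, then $(C_1,C_2,C_3,u_1z,P_2,P_3)$ is a nice flower with $|V(C_3[u_4,v_1])|$ strictly smaller, contradicting the choice of $F$. The only remaining subcase is $w=u_1$, $z=u_4$, i.e.\ $u_1u_4\in E(G)$, which is disposed of by a separating-pair argument in the style of Claims~\ref{empty} and~\ref{P1P3}. \textbf{(b) $z\in V(H)\setminus V(C_3)$.} Let $\Pi$ be a shortest path in $H$ from $z$ to $V(C_3)$, meeting $C_3$ first at $z'$. If $z'\notin V(C_3[v_1,v_2])$, replace the edge $wz$ by $wz\cup\Pi$ and apply a case already treated. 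Otherwise, $H$ has no $(\{v_1,v_3\},\{v_2,u_4\})$-linkage: disjoint $v_1$--$v_3$ and $v_2$--$u_4$ paths in $H$ would extend, via $u_1v_1$, $v_3u_3$, $u_2v_2$ and arcs of $C_1$ and $C_2$, to a kite-linkage of $u_2,u_1,u_3,u_4$. By Theorem~\ref{linkage}, $(H,v_1,v_2,v_3,u_4)$ is $3$-planar; fix a minimal $\mathcal A$ and a plane drawing of $p(H,\mathcal A)$ with $v_1,v_2,v_3,u_4$ on the outer face. If $z$ is not on the outer face, Lemma~\ref{3-planar} (with $s_1=v_2$, $t_1=u_4$, $s_2=v_3$, $t_2=z$) yields disjoint paths $Q$ (from $v_2$ to $u_4$) and $Q'$ (from $v_3$ to $z$) in $H$; then $D\cup wz\cup Q'\cup v_3u_3$ (with $D$ as in the main construction), $u_2v_2\cup Q$, the $u_1$--$u_2$ arc of $C_1$ avoiding $w$, and $C_2[u_2,u_3]$ form a kite-linkage. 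If $z$ does lie on the outer face, one instead routes the two required paths along arcs of the outer cycle, chosen by the cyclic position of $z$ relative to $v_1,v_2,v_3,u_4$.

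\smallskip The step I expect to be the real obstacle is case (b) when $z\in V(H)\setminus V(C_3)$ lies ``between $v_1$ and $v_2$'' on the outer cycle of $p(H,\mathcal A)$ --- equivalently, when the only $2$-linkage in $H$ one can read off is non-crossing with the arc $C_3[v_1,v_2]$. There the rerouting through $w$ and $z$ does not close up into a kite, and one has to exploit the $3$-connectivity and $3$-planar structure of $H$ more carefully (or re-choose the flower), much as in the proof of Proposition~\ref{prop:Conn}. The degenerate subcase $w=u_1$ with $u_1u_4\in E(G)$ is the other point at which a separating-pair argument via Lemma~\ref{lem:ConnSep} seems unavoidable; apart from these two spots, everything reduces to a bounded amount of internal-disjointness bookkeeping.
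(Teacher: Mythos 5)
Your overall strategy is the same as the paper's: pass to a flower normalized as in Proposition~\ref{prop:Conn}, use the $u_1\leftrightarrow u_3$ symmetry, observe that a $(\{v_1,v_3\},\{v_2,u_4\})$-linkage in $H$ would give a kite so that $(H,v_1,v_2,v_3,u_4)$ is $3$-planar by Theorem~\ref{linkage}, and then split on where the neighbor $z$ of $w\in V(C_1)\setminus\{u_2\}$ lies; your ``main construction'' and your case (a) for $w\ne u_1$ are exactly the paper's two explicit kites for $v\in C_3(v_2,u_4)$ and $v\in C_3[u_4,v_1)$, and your case (b) with $z$ off the outer face is the paper's single application of Lemma~\ref{3-planar}. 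The problem is that your write-up leaves two subcases genuinely unproved, and you say so yourself. First, the subcase $w=u_1$, $z=u_4$ (i.e.\ $u_1u_4\in E(G)$) is only ``disposed of by a separating-pair argument in the style of Claims~\ref{empty} and~\ref{P1P3}''; no such argument is given, and it is not routine, since the flower together with the single edge $u_1u_4$ does not yield a kite by inspection (the pendant at $u_2$ must leave through $v_2$, and both ways around $C_3$ then block the $u_1$--$u_3$ side). Second, in case (b) you correctly note that Lemma~\ref{3-planar} needs $t_2\notin F$, but when $z$ lies on the outer face of $p(H,\mathcal A)$ between $v_1$ and $v_2$ the pairs $\{v_3,z\}$ and $\{v_2,u_4\}$ (and likewise $\{v_1,v_3\}$, $\{z,u_4\}$) interleave on the boundary, so the required linkage may simply not exist and ``routing along arcs of the outer cycle'' cannot produce it; you acknowledge this is the real obstacle and do not resolve it. These are missing steps, not bookkeeping. (For what it is worth, these are precisely the points where the paper's own proof is terse: its displayed kite for $v\in C_3[u_4,v_1)$ degenerates when $u=u_1$, and it invokes Lemma~\ref{3-planar} without discussing whether $v$ lies on the outer face; so you have located real subtleties, but your proposal does not close them either.)

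Two smaller remarks. Your reduction ``it suffices to treat $i=1$'' is not compatible with the extra minimality you impose, because minimizing $|V(C_3[u_4,v_1])|$ is not invariant under the involution exchanging $(u_1,C_1,v_1)$ with $(u_3,C_2,v_3)$; you should minimize a symmetric quantity such as $|V(C_3[v_3,v_1])|$ (the arc through $u_4$), which still strictly decreases when $P_1$ is replaced by $u_1z$ with $z\in C_3(u_4,v_1)$, or run the minimality argument separately on each side. Also, in case (b) your reduction ``replace $wz$ by $wz\cup\Pi$ and apply a case already treated'' does not feed correctly into the $w=u_1$, $z'\in C_3(u_4,v_1)$ subcase, since the resulting path is no longer a single edge and the modified flower violates condition (ii) of Proposition~\ref{prop:Conn}, so the minimality comparison you set up does not apply verbatim there. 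Note that the paper avoids your extra minimality device altogether: for $v\in C_3[u_4,v_1)$ it routes the pendant as $C_1[u_2,u]\cup uv\cup C_3[u_4,v]$ and uses $P_1\cup C_3[v_1,v_3]\cup P_3$ (through $v_2$) as the $u_1$--$u_3$ side, which works uniformly for all $u\ne u_1$.
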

\begin{proof}
Suppose otherwise.
We may assume that $F$ is a $(u_1\com u_2\com u_3\com u_4)$-flower as in Proposition~\ref{prop:Conn}, and thus $H$ is $3$-connected.
If $H$ contains a $(\{v_1, v_3\}, \{v_2, u_4\})$-linkage, then a kite-linkage of $u_2\com u_1\com u_3\com u_4$ can easily be found, a contradiction.
Thus by Theorem~\ref{linkage}, $(H, v_1, v_2, v_3, u_4)$ is $3$-planar.
By symmetry, suppose first that there exists $u \in V(C_1(u_2, u_1])$ and $v \in V(H) \setminus V(C_3[v_1, v_2])$ such that $uv \in E(G)$.
Suppose $v \in V(H) \setminus V(C_3)$.
By Lemma~\ref{3-planar}, $H$ contains disjoint paths $P, Q$ with ends $v, v_3$ and $v_2, u_4$, respectively.
Then $Q \cup P_2 \cup C_1[u_1, u] \cup uv \cup P \cup P_3 \cup C_2[u_3, u_2]$ forms a kite-linkage of $u_2\com u_1\com u_3\com u_4$, a contradiction.
Thus $v \in V(C_3(v_2, v_1))$.
If $v \in V(C_3(v_2, u_4))$, let $P$ be the subpath of $C_3(v_2, u_4)$ with ends $v, v_3$.
Then $C_2[u_2, u_3] \cup P_3 \cup P \cup uv \cup C_1[u, u_2] \cup P_2 \cup C_3[u_4, v_2]$ forms a kite-linkage of $u_2\com u_1\com u_3\com u_4$, a contradiction.
Otherwise, $v \in V(C_3[u_4, v_1))$, so $C_2[u_2, u_3] \cup P_3 \cup C_3[v_1, v_3] \cup P_1 \cup C_1[u_1, u] \cup uv \cup C_3[u_4, v]$ forms a kite-linkage of $u_2\com u_1\com u_3\com u_4$, a contradiction again.
\end{proof}

The proof of Theorem~\ref{kite} will now follow immediately from Theorem~\ref{flowerthm} and the next Theorem~\ref{thm:finalstep}.

\begin{theorem}\label{thm:finalstep}
If $G$ is $7$-connected and contains a $(u_1\com u_2\com u_3\com u_4)$-flower for some four vertices $u_2\com u_1\com u_3\com u_4 \in V(G)$, then $G$ contains a walk through $u_2, u_1, u_3, u_2, u_4$ in order.
\end{theorem}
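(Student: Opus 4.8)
The plan is to argue by contradiction: assume $G$ is $7$-connected, contains a $(u_1,u_2,u_3,u_4)$-flower, but does not contain a walk through $u_2,u_1,u_3,u_2,u_4$ in order. Since a walk through $u_2,u_1,u_3,u_2,u_4$ in order is exactly a kite-linkage of $u_2,u_1,u_3,u_4$ (the two $u_1,u_3$-paths meeting only at $u_2$, plus the $u_2,u_4$-path disjoint from them except at $u_2$), this hypothesis lets us invoke all the structural results of the previous sections. First I would replace the given flower by a ``nice'' one: apply Proposition~\ref{prop:Conn} to obtain a flower $F=(C_1,C_2,C_3,P_1,P_2,P_3)$ with $|V(P_j)|=2$ for each $j$, with each $C_i$ having no shorter cycle through $u_i,u_{i+1}$ inside $G[V(C_i)]$, and with $H=G-(V(C_1)\cup V(C_2))$ being $3$-connected. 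By Lemma~\ref{prop}, we moreover know $N(C_i-u_2)\subseteq V(C_3[v_i,v_{i+1}])$ for $i\in[2]$, so the cycles $C_1,C_2$ interact with the rest of $G$ only through $u_1,u_2,u_3$ and through the arc $C_3[v_i,v_{i+1}]$.

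The second step is to exploit the $3$-planarity of $H$. As in the proof of Lemma~\ref{prop}, $H$ contains no $(\{v_1,v_3\},\{v_2,u_4\})$-linkage (such a linkage would immediately give a kite-linkage by routing $u_1\!\to\!v_1$, $u_3\!\to\!v_3$ along $P_1,P_3$, and $u_2\!\to\!v_2\!\to\!u_4$ along $P_2$ and the $v_2,u_4$-path, closing up with $C_1[u_1,u_2]$ and $C_2[u_3,u_2]$); hence by Theorem~\ref{linkage}, $(H,v_1,v_2,v_3,u_4)$ is $3$-planar. I would then fix a minimal family $\mathcal A$ realizing this, form $p(H,\mathcal A)$, and draw it in a disc with $v_1,v_2,v_3,u_4$ on the boundary in that cyclic order. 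The core of the argument is now to find, somewhere in this plane graph, either a low-degree internal vertex or a low-degree boundary edge avoiding the four terminals — this is precisely what Lemma~\ref{discharge} provides (taking $Z$ to be the outer cycle and $\{x,y\}=\{$ two of the terminals $\}$, or arguing on the relevant sub-arc). A vertex or edge with total degree at most $6$ or $7$ in $p(H,\mathcal A)$ corresponds, back in $G$, to a vertex set $A$ (the vertex/edge together with any bags $A_i$ it absorbs) whose neighborhood in $H$ is small — at most $6$ or $7$ vertices, at most $3$ of which can be on $C_3[v_i,v_{i+1}]$ arcs via its boundary position. Using $7$-connectedness of $G$, $A$ must then send an edge into $V(C_1)\cup V(C_2)$, i.e.\ to some interior vertex of $C_1$ or $C_2$ (since by Lemma~\ref{prop} those cycles' non-$u_2$ vertices have no other outside neighbors).

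The final step is to turn such an edge into the forbidden kite-linkage. Given an edge from a vertex $a\in A$ to $a'\in V(C_1(u_2,u_1])$, say, I would use Lemma~\ref{lem:ConnSep} applied to a special $(u_1,u_2,C_1,A)$-separating pair to route a path through $a',u_1,u_2$ (or through $u_1,u_2$ and into $A$), combine it with a path inside $A\cup N(A)$ reaching the appropriate terminals $v_1,v_2,v_3,u_4$ on the boundary of the disc (available because $p(H,\mathcal A)$ is planar with those four vertices cofacial, so two disjoint connecting paths can be chosen to pair the right terminals), and close everything up with $P_1,P_2,P_3$ and arcs of $C_2$ and $C_3$ to produce the walk $u_2,u_1,u_3,u_2,u_4$. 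One will need to run through the few combinatorial cases for where the low-degree vertex/edge sits relative to the arcs $C_3[v_1,v_2]$, $C_3[v_2,v_3]$, $C_3[v_3,u_4]$, $C_3[u_4,v_1]$, and whether it touches $C_1$ or $C_2$; each case yields a contradiction.

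\textbf{Main obstacle.} The hard part will be the bookkeeping in the final step: translating a bounded-degree certificate in the abstract planar graph $p(H,\mathcal A)$ back into $G$ while keeping track of which of $v_1,v_2,v_3,u_4$ the small set $A$ can reach, and then choosing the disjoint paths inside $A$ and inside $H-A$ so that, together with the carefully oriented $C_1,C_2,C_3$ arcs and $P_1,P_2,P_3$, they assemble into a single walk visiting $u_2,u_1,u_3,u_2,u_4$ in exactly that order. Ensuring the two ``halves'' of the kite through $u_2$ stay internally disjoint and that the $u_2,u_4$-path avoids them — in every positional case — is where all the real work lies; Lemmas~\ref{lem:ConnSep}, \ref{3-planar}, and \ref{discharge} are designed precisely to make each case go through, so the proof is a (lengthy) case analysis rather than a conceptual leap.
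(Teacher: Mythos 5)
Your outline does follow the paper's skeleton (Proposition~\ref{prop:Conn}, then Lemma~\ref{prop}, Theorem~\ref{linkage}, Lemma~\ref{discharge}, and finally special separating pairs with Lemma~\ref{lem:ConnSep}), but there are two genuine gaps. First, the paper does not run the discharging argument on $p(H,\mathcal{A})$: it first argues that $\mathcal{A}$ may be taken empty, i.e.\ that $H$ itself is planar, because a nonempty bag $A$ would, by Lemma~\ref{prop}, give a cutset $N(A)$ of order at most $3$ in the $7$-connected graph $G$. This reduction is not cosmetic. Degrees in $p(H,\mathcal{A})$ do not bound degrees in $G$ (a vertex attached to a bag $A_i$ loses all its neighbors inside $A_i$ and gains at most two virtual edges), so your translation of a low-degree vertex or edge of $p(H,\mathcal{A})$ into a set with small neighborhood in $G$ is unsound as stated; relatedly, you never say how outcome (i) of Lemma~\ref{discharge} (an interior vertex of degree at most $6$) is excluded, which in the paper comes exactly from $H$ being planar together with Lemma~\ref{prop} and $\delta(G)\ge 7$. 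Your parenthetical reading of Lemma~\ref{prop} is also reversed: it restricts the $H$-ends of edges leaving $C_1-u_2$ and $C_2-u_2$ to the arcs $C_3[v_i,v_{i+1}]$, so only a set meeting $C_3[v_1,v_3]$ can receive edges from those cycles --- which is precisely why the paper's certificate is an edge $uv$ of $C_3[v_1,v_3]-v_2$ with $d_H(u)+d_H(v)\le 7$, not an arbitrary low-degree object or bag.

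Second, what you defer as ``bookkeeping'' is the actual content of the theorem's proof. With such an edge $uv$ in hand (say on $C_3[v_1,v_2)$, so that between them $u$ and $v$ have at least $7$ neighbors on $C_1$), the paper takes a special $(u_1,u_2,C_1,\{u,v\})$-separating pair $\{R_1,R_2\}$, shows $\pint(R_1)\cup\pint(R_2)\ne\emptyset$ (otherwise both ends of $R_1$ are adjacent to both $u$ and $v$ and a kite-linkage is immediate), then uses $7$-connectivity to find an edge $xy$ from $\pint(R_1)$ back to $C_3[v_1,v_2)$ and closes the kite via Lemma~\ref{lem:ConnSep}(i) in two short cases; Lemma~\ref{3-planar}, which your sketch leans on, is not needed at this stage (it is used inside the proof of Lemma~\ref{prop}). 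So the proposal is a reasonable roadmap in the paper's spirit, but as written the degree/planarity accounting is broken while the bags are kept, and the decisive linkage constructions are missing.
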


\begin{proof}
Suppose otherwise.
Let $F = (C_1\com C_2\com C_3\com P_1\com P_2\com P_3)$ be a $(u_1\com u_2\com u_3\com u_4)$-flower as in Proposition~\ref{prop:Conn}.
Then $H = G - (V(C_1) \cup V(C_2))$ is $3$-connected.
We may assume that $H$ is planar, for otherwise $\mathcal{A} \ne \emptyset$ and by Lemma~\ref{prop}, $N(A)$ is a separating set in $G$ of order at most $3$, contradicting that $G$ is $7$-connected.
Since $G$ is $7$-connected, $\delta(G) \ge 7$.
Thus by Lemmas~\ref{discharge} and~\ref{prop}, there exists an edge $uv$ in $C_3[v_1, v_3] - v_2$ such that $d(u) + d(v) \le 7$ in $H$.
By symmetry, say $u, v \in V(C_3[v_1,v_2))$ with $u$ closer to $v_2$.
Let $\{R_1, R_2\}$ be a special $(u_1, u_2, C_1, \{u, v\})$-separating pair in $G$.
Let $X = \pint(R_1) \cup \pint(R_2) \cup \{u,v\}$, $T = \pend(R_1) \cup \pend(R_2)$, and $Y = V(G) \setminus (X \cup T)$.
We next show that $\pint(R_1) \cup \pint(R_2) \ne \emptyset$.
For otherwise, since $G$ is $7$-connected, $d(u) + d(v) \ge 7$ in $V(C_1)$, so at most one $r_i^j$ is not a neighbor of both $u, v$, where $r_i^j$ is the end of $R_i$ closer to $u_j$.
By the symmetry of $R_1$ and $R_2$, we may assume that $ur_1^1, ur_1^2, vr_1^1, vr_1^2 \in E(G)$.
Then $C_3[u_4, v] \cup vr_1^2 \cup C_1[r_1^2, r_1^1] \cup ur_1^1 \cup C_3[u, v_3] \cup P_3 \cup C_2[u_2, u_3]$ forms a kite-linkage of $u_2\com u_1\com u_3\com u_4$, a contradiction.
Now since $|T| \le 6$ and $G$ is $7$-connected, there exist $x \in X$ and $y \in Y$ such that $xy \in E(G)$.
By the definition of separating pair and symmetry of $R_1$ and $R_2$, we have $x \in \pint(R_1), y \in V(C_3[v_1,v_2)) \setminus \{v_2\}$.
So either $y \in V(C_3[v_1, v))$ or $y \in V(C_3(u, v_2))$.
Since $\{R_1, R_2\}$ is also a special $(u_2, u_1, C_1, \{u, v\})$-separating pair if we reverse the orientation of $C_1$, by Lemma~\ref{lem:ConnSep}(i), there is an edge $a_i a_i' \in E(G)$ so that $a_i \in \{u, v\}, a_i' \in V(C_1)$, and $G[V(C_1)]$ contains a path $Q_i$ with ends $x$ and $a_i'$ going through $x, u_{3 - i}, u_i, a_i'$ in order, for $i \in [2]$.
If $y \in V(C_3[v_1, v))$, then $C_3[u_4, y] \cup yx \cup Q_1 \cup a_1' a_1 \cup C_3[a_1, v_3] \cup P_3 \cup C_2[u_3, u_2]$ forms a kite-linkage of $u_2\com u_1\com u_3\com u_4$.
If $y \in V(C_3(u, v_2))$, then $C_3[u_4, a_2] \cup a_2 a_2' \cup Q_2 \cup xy \cup C_3[y, v_3] \cup P_3 \cup C_2[u_3, u_2]$ forms a kite-linkage of $u_2\com u_1\com u_3\com u_4$, in both cases a contradiction.
\end{proof}

\section{Final remarks}

It is well-known that a $3$-connected graph $G$ has a cycle containing any three given vertices in $G$.
The $2$-connected graphs which do not have this property are characterized in~\cite{WM67}.
We mentioned that fat-triangle-linkage is a generalization of this property.
It is interesting to characterize the $(k - 1)$-connected graphs that are not $F_{k_1, k_2, k_3}$-linked, where $k_1 + k_2 + k_3 = k$.

We showed that every $7$-connected graph with a specified flower is kite-linked.
The requirement of $8$-connectivity is only used to find the desired flower.
More specifically, we need $8$-connectivity in the proof of Theorem~\ref{flowerthm} in order to find a fat-triangle disjoint from a certain vertex.
If one can show the existence of flowers in $7$-connected graphs, then our results in this paper would show $f(P_4^+) = 7$.
A characterization as mentioned above may be helpful to show the existence of such flowers in $7$-connected graphs.

The proof for $f(P_4) = 7$ uses the characterization of non-$P_4$-linked graphs, which is quite complicated.
By using similar ideas as in this paper, we can find a self-contained and much simpler proof.
Here we sketch the steps of the proof.
Let $G$ be a $7$-connected graph, and let $u_1, u_2, u_3, u_4 \in V(G)$.
Suppose that we cannot find a path through $u_1, u_2, u_3, u_4$ in order.
Firstly, we can find the structure depicted in Figure~\ref{fig:mushroom} in $G$.
The proof of this is similar to the proof of Lemma~\ref{lem:flower} and Theorem~\ref{flowerthm} except that we start with a $(3, 3, 0)$-fat-triangle here.
We can then prove properties of this structure in a fashion similar to Proposition~\ref{prop:Conn}.
The remainder of the proof is similar to what we have done in Section~\ref{sec:final}.

\begin{figure}[H]
\includegraphics[scale=0.65]{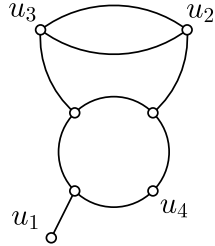}
\caption{Intermediate structure used to find a $P_4$-linkage}
\label{fig:mushroom}
\end{figure}

\bibliographystyle{plain}
\bibliography{GYUa}

\end{document}